\tikzset{
    hatch size/.store in=\hatchsize,
    hatch angle/.store in=\hatchangle,
    hatch line width/.store in=\hatchlinewidth,
    hatch size=5pt,
    hatch angle=0pt,
    hatch line width=.5pt,
}
\newtheorem{lem}[theorem]{Lemma}
\newtheorem{prop}[theorem]{Proposition}
\theoremstyle{definition}
\newtheorem{defn}[theorem]{Definition}
\theoremstyle{definition}
\newtheorem{problem}[theorem]{Problem}
\theoremstyle{definition}
\newtheorem{obs}[theorem]{Observation}
\theoremstyle{definition}
\newtheorem{rmk}[theorem]{Remark}
\theoremstyle{definition}
\newcommand{\Z}{\mathbb{Z}}
\newcommand{\N}{\mathbb{N}}
\newcommand{\Q}{\mathbb{Q}}
\newcommand{\K}{\mathbb{K}}
\newcommand{\Rp}{\mathbb{R}_{\geq 0}}
\newcommand{\lcm}{\operatorname{lcm}}
\newcommand{\mG}{\mathcal{G}}
\newcommand{\mI}{\mathcal{I}}
\newcommand{\tI}{\widetilde{\mathcal{I}}}
\newcommand{\tJ}{\widetilde{\mathcal{J}}}
\newcommand{\mH}{\mathcal{H}}
\newcommand{\mA}{\mathcal{A}}
\newcommand{\mM}{\mathcal{M}}
\newcommand{\mZ}{\mathcal{Z}}
\newcommand{\ba}{\boldsymbol{a}}
\newcommand{\bb}{\boldsymbol{b}}
\newcommand{\bc}{\boldsymbol{c}}
\newcommand{\bm}{\boldsymbol{m}}
\newcommand{\bzer}{\boldsymbol{0}}
\newcommand{\gen}[1]{\langle {#1} \rangle}
\newcommand{\bff}{\boldsymbol{f}}
\newcommand{\bg}{\boldsymbol{g}}
\newcounter{ProblemCounter}
\newcounter{DecideCounter}
\title{Subgroup and Coset Intersection in abelian-by-cyclic groups}
\author{Ruiwen Dong}{Department of Computer Science, University of Oxford}{ruiwen.dong@kellogg.ox.ac.uk}{}{}
\authorrunning{R. Dong}
\keywords{computational group theory, infinite groups, abelian-by-cyclic groups, metabelian groups, subgroup intersection, coset intersection} 
\begin{document}

\maketitle
\begin{abstract}
We consider two decision problems in infinite groups.
The first problem is Subgroup Intersection: given two finitely generated subgroups $\langle \mathcal{G} \rangle, \langle \mathcal{H} \rangle$ of a group $G$, decide whether the intersection $\langle \mathcal{G} \rangle \cap \langle \mathcal{H} \rangle$ is trivial.
The second problem is Coset Intersection: given two finitely generated subgroups $\langle \mathcal{G} \rangle, \langle \mathcal{H} \rangle$ of a group $G$, as well as elements $g, h \in G$, decide whether the intersection of the two cosets $g \langle \mathcal{G} \rangle \cap h \langle \mathcal{H} \rangle$ is empty.
We show that both problems are decidable in finitely generated abelian-by-cyclic groups.
In particular, we reduce them to the Shifted Monomial Membership problem (whether an ideal of the Laurent polynomial ring over integers contains any element of the form $X^z - f,\; z \in \mathbb{Z} \setminus \{0\}$). 
We also point out some obstacles for generalizing these results from abelian-by-cyclic groups to arbitrary metabelian groups.
\end{abstract}

\newpage

\section{Introduction}
\paragraph*{Algorithmic problems in groups}
Computational group theory is one of the
oldest and most well-developed parts of computational algebra.
Dating back to the first half of the twentieth century, the area provided some of the first undecidability results in the theory of computing.
Among the most prominent problems is the Subgroup Membership problem, proposed by Mikhailova~\cite{mikhailova1966occurrence} in the 1960s.
For this problem, we work in a group $G$ that is typically infinite but finitely generated (such as the additive group of $\Z$, or a matrix group over integers).
For a finite subset $\mG$ of $G$, denote by $\gen{\mG}$ the subgroup of $G$ generated by $\mG$.
The Subgroup Membership problem for the group $G$ is defined as follows.
\begin{enumerate}[(i)]
    \item \textit{(Subgroup Membership)} Given a finite set of elements $\mG \subseteq G$ and $h \in G$, decide whether $h \in \gen{\mG}$.
    \setcounter{ProblemCounter}{\value{enumi}}
\end{enumerate}
Denote by $e$ the neutral element of $G$.
Another widely studied problem is the Subgroup Intersection problem:
\begin{enumerate}[(i)]
    \setcounter{enumi}{\value{ProblemCounter}}
    \item \textit{(Subgroup Intersection)} Given two finite sets of elements $\mG, \mH \subseteq G$, decide whether $\gen{\mG} \cap \gen{\mH}$ is the trivial group $\{e\}$.
    \setcounter{ProblemCounter}{\value{enumi}}
\end{enumerate}
While the intersection $\gen{\mG} \cap \gen{\mH}$ is always a group, it is usually not clear how to compute its effective representation. 
In fact, the group $G$ might not satisfy the \emph{Howson property}, meaning the intersection of two finitely generated subgroups $\gen{\mG}, \gen{\mH}$ of $G$ might not be finitely generated~\cite{Moldavanskii1968IntersectionOF}.
Deciding triviality of $\gen{\mG} \cap \gen{\mH}$ can be considered as a first step towards understanding intersection of subgroups in the given group $G$.

For an element $g \in G$ and a subgroup $S \leq G$, define the coset $g S \coloneqq \{gs \mid s \in S\}$.
We also consider the Coset Intersection problem:
\begin{enumerate}[(i)]
    \setcounter{enumi}{\value{ProblemCounter}}
    \item \textit{(Coset Intersection)} Given two finite sets of elements $\mG, \mH \subseteq G$ and $g, h \in G$, decide whether $g \gen{\mG} \cap h \gen{\mH}$ is empty.
    \setcounter{ProblemCounter}{\value{enumi}}
\end{enumerate}
Coset Intersection as well as Subgroup Intersection has been extensively studied in various contexts such as permutation groups~\cite{babai2010coset}, abelian and nilpotent groups~\cite{babai1996multiplicative, macdonald2019low}, and right-angled Artin groups~\cite{delgado2018intersection}.
They are intimately related to problems from numerous other areas such as Graph Isomorphism~\cite{luks1982isomorphism}, vector reachability~\cite{potapov2019vector} and automata theory~\cite{delgado2018intersection}.
Since the intersection $g \gen{\mG} \cap h \gen{\mH}$ is empty if and only if $\gen{\mG} \cap g^{-1} h \gen{\mH}$ is empty, we can without loss of generality suppose $g = e$ in the definition of Coset Intersection.
That is, we want to decide whether $\gen{\mG} \cap h \gen{\mH} = \emptyset$.
We may note that by setting $\mH = \{e\}$, Coset Intersection subsumes Subgroup Membership. However, Coset Intersection does not subsume Subgroup Intersection, despite their obvious connection.

It is not surprising that for general groups, all three problems are undecidable.
A classic result of Mikhailova~\cite{mikhailova1966occurrence} shows that Subgroup Membership is undecidable for the direct product $F_2 \times F_2$ of two free groups $F_2$ over two generators.
Moreover, Mikhailova's construction also implies that Subgroup Intersection is undecidable for $F_2 \times F_2$~\cite{199499}.

Nevertheless, for finitely generated free groups and abelian groups, Subgroup Membership, Subgroup Intersection and Coset Intersection have been shown to be decidable.
For free groups, these decidability results were obtained using the classic construction of \emph{Stallings foldings}~\cite[Proposition~6.1]{bogopolski2010orbit}~\cite[Proposition~7.2, Corollary~9.5]{KAPOVICH2002608}~\cite{stallings1991foldings}.
This automata-inspired construction has now become the standard tool for describing subgroups of free groups.
For abelian matrix groups, Babai, Beals, Cai, Ivanyos and Luks~\cite{babai1996multiplicative} famously reduced computational problems for commutative matrices to computation over lattices.
Therefore Subgroup Membership, Subgroup Intersection and Coset Intersection reduce to linear algebra over $\Z$ and are decidable in polynomial time.

\paragraph*{Metabelian and abelian-by-cyclic groups}
As most algorithmic problems for abelian groups are well-understood due to their relatively simple structure, much effort has focused on relaxations of the commutativity requirement.
For example, the aforementioned decidability results have been successfully extended to the class of nilpotent groups~\cite{macdonald2019low}.
Among the simplest and most well-studied extensions to abelian groups is the class of \emph{metabelian groups}.
A group $G$ is called metabelian if it admits an abelian normal subgroup $A$ such that the quotient group $G/A$ is abelian.
Developing a complete algorithmic theory for finitely generated metabelian groups has been the focus of intense research since the 1950s~\cite{baumslag1994algorithmic, hall1954finiteness}.

Despite their simple definition, many problems in finitely generated metabelian groups are still far from being well understood.
Unlike free groups, abelian groups and nilpotent groups, metabelian groups do not satisfy the Howson property~\cite{baumslag2010subgroups, howson1954intersection}.
This makes solving intersection-type problems in metabelian groups much more difficult.
Among the three problems introduced above, only the decidability of Subgroup Membership is known, thanks to a classic result of Romanovskii~\cite{romanovskii1974some}.
Subgroup Intersection has been solved only for \emph{free} metabelian groups~\cite{baumslag2010subgroups} and the wreath products $\Z^m \wr \Z^n, m, n \geq 1$.
Unfortunately, this solution does not generalize to arbitrary metabelian groups, as explicitly stated after~\cite[Corollary~C]{baumslag2010subgroups}.
Despite Subgroup Intersection and Coset Intersection being currently out of reach for arbitrary metabelian groups, various results have been obtained for specific classes of metabelian groups.
Recent results by Lohrey, Steinberg and Zetzsche~\cite{lohrey2015rational} showed decidability of the \emph{Rational Subset Membership} problem (which subsumes Coset Intersection) in the wreath products $(\Z/p\Z) \wr \Z,\; p \geq 2$.
This result has been extended to the \emph{Baumslag-Solitar groups} $\mathsf{BS}(1, p),\; p \geq 2,$ by Cadilhac, Chistikov and Zetzsche~\cite{DBLP:conf/icalp/CadilhacCZ20}.
The groups $(\Z/p\Z) \wr \Z$ and $\mathsf{BS}(1, p)$ can be respectively represented as groups of $2 \times 2$ matrices over the Laurent polynomial ring $\left(\Z / p \Z\right)[X, X^{-1}]$ and over the ring $\Z[1/p] = \{\frac{a}{p^n} \mid a \in \Z, n \in \N\}$:
\begin{align}
\left(\Z / p \Z\right) \wr \Z & \cong \left\{ 
\begin{pmatrix}
        X^{b} & f \\
        0 & 1
\end{pmatrix}
\;\middle|\; f \in \left(\Z / p \Z\right)[X, X^{-1}], b \in \Z 
\right\}, \label{eq:defwr}
\\
\mathsf{BS}(1, p) & \cong \left\{ 
\begin{pmatrix}
        p^{b} & f \\
        0 & 1
\end{pmatrix}
\;\middle|\; f \in \Z[1/p], b \in \Z 
\right\}. \label{eq:defbs}
\end{align}
Alternatively, the element 
$
\begin{pmatrix}
        X^{b} & f \\
        0 & 1
\end{pmatrix}
\in 
(\Z/p\Z) \wr \Z
$
can be thought of as a Turing machine configuration whose tape cells contain letters in $\{0, 1, \ldots, p-1\}$ which correspond to the coefficients of the polynomial $f$, while the head of the machine is positioned at the cell $b$.
Multiplication in $(\Z/p\Z) \wr \Z$ corresponds to operating the machine by moving the head and adding integers to the cells modulo $p$. (See~\cite{lohrey2015rational} for a complete description.)
Similarly, $\mathsf{BS}(1, p)$ can be considered as a version of $(\Z/p\Z) \wr \Z$ with ``carrying''.
The element
$
\begin{pmatrix}
        p^{b} & f \\
        0 & 1
\end{pmatrix}
\in 
\mathsf{BS}(1, p)
$
can be seen the base-$p$ expansion of the rational number $f \in \Z[1/p]$, along with a cursor at the $b$-th position. Multiplication in $\mathsf{BS}(1, p)$ corresponds to aligning the cursors of the two elements and adding up the numbers $f$. (See~\cite{DBLP:conf/icalp/CadilhacCZ20} for a complete description.)

The Turing machine-like structure of $(\Z/p\Z) \wr \Z$ and $\mathsf{BS}(1, p)$ can be explained by the following fact.
Both groups belong to the much broader class of groups called \emph{abelian-by-cyclic} groups.
A group is called abelian-by-cyclic if it admits an abelian normal subgroup $A$ such that the quotient group $G/A$ is isomorphic to $\Z$.
Intuitively, this isomorphism to $\Z$ gives them the Turing machine-like structure described above, as $\Z$ represents the indices of the tape.
Abelian-by-cyclic groups have been extensively studied from the point of view of geometry and growth~\cite{farb2000asymptotic, hurtado2021global}, algorithmic problems~\cite{boler1976conjugacy}, random walks~\cite{pittet2003random}, and group algebra isomorphism~\cite{baginski1999isomorphism}.
They also serve as a first step towards understanding general metabelian groups, whose definition is obtained by replacing $\Z$ with an arbitrary abelian group.
Figure~\ref{fig:metabelian} illustrates the relations between the classes of groups introduced above, as well as their known decidability results.

\begin{figure}
    \centering
    \begin{tikzpicture}

        \node [draw, ellipse, thick, black, minimum width=10.2cm, minimum height=5cm, align=center] at   (-5,0)   {};
        \node [align=center, black] at (-5,2.2){\textbf{metabelian}};

        \node [draw, circle, thick, black,fill=blue!20, minimum size=1cm, align=center] at (-8.6,-0.1){$(\Z/p\Z) \wr \Z$};

        \node [draw, circle, thick, black,fill=blue!20, minimum size=1cm, align=center] at (-6.8,-0.8){$\mathsf{BS}(1, p)$};

        \node [draw, circle, thick, black, fill=red!20, minimum size=1cm, align=center] at (-5.2,-0.7){$\Z^m \wr \Z$};

        \node [align=center, black] at (-4.1,-0.5){$\boldsymbol{\cdots}$};

        \node [draw, ellipse, thick, black, fill=red!20, minimum width=2.5cm, minimum height=3.5cm, align=center] at   (-2.1,-0.1)   {};
        \node [align=center, black] at (-2.1,1){\textbf{free} \\ \textbf{metabelian}};

        \node [draw, ellipse, thick, black, pattern=hatch, pattern color=brown!70, hatch size=6pt, minimum width=6.3cm, minimum height=3.5cm, align=center] at   (-6.7,-0.1)   {};
        \node [align=center, black] at (-6.7,1.2){\textbf{abelian-by-cyclic}};

        \matrix [draw,below] at (-5, -3){
          \node [shape=circle, draw=black, fill=red!20, label=right:Known decidability results for Subgroup Intersection~\cite{baumslag2010subgroups}] {}; \\
          \node [shape=circle, draw=black, fill=blue!20, label=right:Known decidability results for Coset Intersection~\cite{DBLP:conf/icalp/CadilhacCZ20,lohrey2015rational}] {}; \\
          \node [shape=circle, draw=black, pattern=hatch, pattern color=brown!70, hatch size=4pt, label=right:Our decidability results for Subgroup Intersection and Coset Intersection] {}; \\
        };
    \end{tikzpicture}
    \caption{Inclusion relation of different classes of metabelian groups.}
    \label{fig:metabelian}
\end{figure}
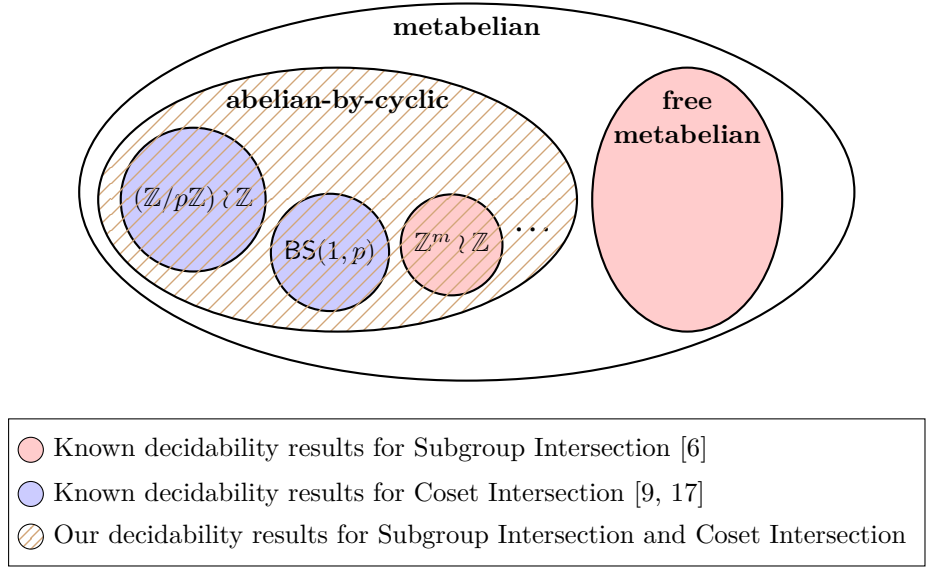

In this paper, we show decidability of Subgroup Intersection and Coset Intersection in finitely generated abelian-by-cyclic groups.
Our approach is different from the automata-based methods~\cite{DBLP:conf/icalp/CadilhacCZ20,lohrey2015rational} used for $(\Z/p\Z) \wr \Z$ and $\mathsf{BS}(1, p)$.
We reduce both Subgroup and Coset Intersection to the problem of finding an element of the form $X^z - f,\; z \in \Z \setminus \{0\}$ in a given ideal of the Laurent polynomial ring $\Z[X, X^{-1}]$. 
This problem has already been solved by Noskov~\cite{noskov1982conjugacy}.
However, Noskov's solution relies on a series of intricate arguments in commutative algebra.
We propose a more direct solution using a combination of computational algebraic geometry and number theory.

A natural follow-up to our work would be trying to generalize our results to arbitrary metabelian groups.
This boils down to generalizing several arguments in this paper to \emph{multivariate} polynomial rings, which become significantly more difficult.

\section{Preliminaries}

\paragraph*{Laurent polynomial ring and modules}

A (univariate) \emph{Laurent polynomial} with coefficients over $\Z$ is an expression of the form
\[
f = \sum_{i = p}^q a_i X^i, \quad \text{where $p, q \in \Z$ and $a_i \in \Z, i = p, p+1, \ldots, q$.}
\]
The set of all Laurent polynomials with coefficients over $\Z$ forms a ring and is denoted by $\Z[X^{\pm}]$.
On the other hand, we denote by $\Z[X]$ the \emph{usual} univariate polynomial ring over $\Z$: it contains elements whose monomials have non-negative degree.

Let $d \geq 1$ be a positive integer. One can similarly define the Laurent polynomial ring
\[
\Z[X^{\pm d}] \coloneqq \left\{\sum_{i = p}^q a_{di} X^{di} \in \Z[X^{\pm}] \;\middle|\; p, q \in \Z, a_{dp}, \ldots, a_{dq} \in \Z \right\}.
\]
Its elements are Laurent polynomials whose monomials have degrees divisible by $d$.

Let $R$ be a commutative ring.
An $R$-module is defined as an abelian group $(M, +)$ along with an operation $\cdot \;\colon R \times M \rightarrow M$ satisfying $f \cdot (a+b) = f \cdot a + f \cdot b$, $(f + g) \cdot a = f \cdot a + g \cdot a$, $fg \cdot a = f \cdot (g \cdot a)$ and $1 \cdot a = a$.
We will denote by $\bzer$ the neutral element of an $R$-module $M$.

For example, for any $d \in \N$, the group $\Z[X^{\pm}]$ can be seen as a $\Z[X^{\pm d}]$-module by $f \cdot g \coloneqq fg, \; f \in \Z[X^{\pm d}], g \in \Z[X^{\pm}]$.
In general, in order to define a $\Z[X^{\pm d}]$-module structure on an abelian group $M$, it suffices to define $X^d \cdot m$ and $X^{-d} \cdot m$ for all $m \in M$.
The value of $f \cdot m, \; f \in \Z[X^{\pm d}], m \in M,$ would then follow from the linearity of the operation $\cdot$.

An \emph{ideal} of $R$ is a subset of $R$ that is an $R$-module.
If $M$ is an $R$-module and $m \in M$, then $R \cdot m \coloneqq \{r \cdot m \mid r \in R\}$ is again an $R$-module.
If $N$ and $N'$ are $R$-submodule of $M$, then $N + N' \coloneqq \{n + n' \mid n \in N, n' \in N'\}$ is again an $R$-submodule of $M$.

\paragraph*{Finite presentation of modules}

For any $D \in \N$, $\Z[X^{\pm}]^D$ is a $\Z[X^{\pm}]$-module by $f \cdot (g_1, \ldots, g_D) \coloneqq (fg_1, \ldots, fg_D)$.
Throughout this paper, we use the bold symbol $\bff$ to denote a vector $(f_1, \ldots, f_d) \in \Z[X^{\pm}]^D$.
Given $\bg_1, \ldots, \bg_m \in \Z[X^{\pm}]^D$, we say they \emph{generate} the $\Z[X^{\pm}]$-module $\sum_{i=1}^D \Z[X^{\pm}] \cdot \bg_i$.
A module is called \emph{finitely generated} if it can be generated by a finite number of elements.
Given two finitely generated $\Z[X^{\pm}]$-submodules $N, M$ of $\Z[X^{\pm}]^D$ such that $N \subseteq M$, we can define the quotient $M/N \coloneqq \{\overline{\bm} \mid \bm \in M\}$ where $\overline{\bm_1} = \overline{\bm_2}$ if and only if $\bm_1 - \bm_2 \in N$.
This quotient is also an $\Z[X^{\pm}]$-module.
We say that an $\Z[X^{\pm}]$-module $\mA$ is \emph{finitely presented} if it can be written as a quotient $M/N$ for two finitely generated submodules $N \subseteq M$ of $\Z[X^{\pm}]^D$ for some $D \in \N$.
Such a pair $(M, N)$, given by their respective generators, is called a \emph{finite presentation} of $\mA$.
The element $\overline{\bm}$ of $\mA$ is effectively represented by $\bm \in \Z[X^{\pm}]^D$, this representation is unique modulo $N$.

Effective computation in finitely presented modules over polynomial rings is a well-studied area, with numerous algorithms developed to solve a wide range of computation problems.
In particular, these algorithms have been applied to solve various other decision problems in metabelian groups, see the paper~\cite{baumslag1981computable} by Baumslag, Cannonito and Miller for a comprehensive account on this subject.
The following are some classic computational problems with effective algorithms that we will make use of.

\begin{lem}[{\cite[Lemma~2.1,~2.2]{baumslag1981computable}}]\label{lem:classicdec}
    Let $\mA$ be a $\Z[X^{\pm}]$-module with a given finite presentation.
    The following problems are effectively solvable:
    \begin{enumerate}[(i)]
        \item \textit{(Submodule Membership)} Given elements $\ba_1, \ldots, \ba_k, \ba \in \mA$, decide whether $\ba$ is in the submodule generated by $\ba_1, \ldots, \ba_k$.
        \item \textit{(Computing Syzygies)} Given elements $\ba_1, \ldots, \ba_k \in \mA$, compute a finite set of generators for the \emph{Syzygy module} $S \subseteq \Z[X^{\pm}]^k$:
        \[
        S \coloneqq \left\{(f_1, \ldots, f_k) \in \Z[X^{\pm}]^k \;\middle|\; f_1 \cdot \ba_1 + \cdots + f_k \cdot \ba_k = \bzer \right\}.
        \]
        \item \textit{(Computing Intersection)} Given the generators $\ba_1, \ldots, \ba_k$ of a submodule $A \subseteq \mA$ and the generators $\bb_1, \ldots, \bb_m$ of a submodule $B \subseteq \mA$, compute a finite set of generators for the submodule $A \cap B$.
        \setcounter{DecideCounter}{\value{enumi}} 
    \end{enumerate}
    This effectiveness still holds if we replace the Laurent polynomial ring $\Z[X^{\pm}]$ with the regular polynomial ring $\Z[X]$.
\end{lem}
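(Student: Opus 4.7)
The plan is to reduce all three problems to Gröbner basis computations on submodules of the free module $\Z[X]^D$ over the ordinary (non-Laurent) polynomial ring $\Z[X]$. Since $\Z$ is a principal ideal domain, Buchberger's algorithm extends to $\Z[X]$ (and hence to $\Z[X]^D$) provided one replaces the usual field reduction step by one that performs a gcd computation on leading coefficients. This is precisely the framework developed in the cited paper of Baumslag, Cannonito and Miller, and it yields effective procedures for Submodule Membership, Syzygy computation, and Intersection in the ordinary polynomial setting.

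The first step is the passage from $\Z[X^{\pm}]$ to $\Z[X]$. Every $\bg \in \Z[X^{\pm}]^D$ equals $X^{-k} \bg'$ for some $k \in \Zp$ and $\bg' \in \Z[X]^D$, and since $X$ is a unit of $\Z[X^{\pm}]$, replacing $\bg$ by $X^k \bg$ does not change the $\Z[X^{\pm}]$-submodule it generates. Hence any finitely generated submodule $N \subseteq \Z[X^{\pm}]^D$ has generators $\bg_1, \ldots, \bg_m$ lying in $\Z[X]^D$. The only subtlety is that the $\Z[X]$-submodule $N'$ generated by the same tuples is in general strictly contained in $N$: one has $\bff \in N$ iff $X^n \bff \in N'$ for some $n \geq 0$. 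Thus $\Z[X^{\pm}]$-membership reduces to membership in the $X$-saturation of $N'$, which is itself a routine Gröbner basis computation over $\Z[X]$. The same saturation trick handles syzygies and intersections.

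Once inside $\Z[X]^D$, each subproblem becomes classical. Submodule Membership in $\mA = M/N$ is answered by testing whether $\bm \in N + \sum_i \Z[X^{\pm}] \cdot \ba_i$, which is a plain membership test handled by Gröbner reduction modulo the combined generating set. A generating set for the syzygy module is read off from the S-pair reductions carried out when computing a Gröbner basis of $\ba_1, \ldots, \ba_k$: each S-pair that reduces to zero yields a syzygy, and together these generate the full syzygy module; pulling back through the change-of-basis matrix between the original generators and the Gröbner basis gives the desired finite generating set of $S$. Intersection $A \cap B$ is computed by the standard elimination trick: introduce an auxiliary variable $T$, form the $\Z[X, T]$-submodule $T \cdot A + (1-T) \cdot B \subseteq \Z[X, T]^D$, compute a Gröbner basis with respect to an elimination order that removes $T$, and keep the generators lying in $\Z[X]^D$.

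The main obstacle is the coefficient ring being $\Z$ rather than a field: the reduction step must track gcd's of leading coefficients throughout Buchberger's algorithm, and one must verify that a suitably adapted S-polynomial criterion still supplies a terminating test. This is technical but well-understood, and since the lemma is explicitly quoted from Baumslag--Cannonito--Miller, the proof would simply invoke their results after carrying out the Laurent-to-ordinary reduction sketched above.
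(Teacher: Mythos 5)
The paper does not actually prove this lemma; it is stated as a citation to Baumslag, Cannonito and Miller, so there is no internal proof to compare against. Your sketch is a correct account of the standard modern route: clear the $X$-denominators to land in $\Z[X]^D$, pass to the $X$-saturation, and run Buchberger's algorithm with coefficient ring $\Z$. Two places where the sketch is thinner than it ought to be if you were actually to write this out. First, all three problems are posed in a finitely presented quotient $\mA = M/N$, so every computation has to be performed \emph{relative to $N$}: you address this for membership, but the syzygy module you want is $\{(f_1,\dots,f_k) \mid \sum_i f_i \cdot \widetilde{\ba}_i \in N\}$ for lifts $\widetilde{\ba}_i$ of the $\ba_i$, which one computes by taking syzygies of the extended tuple $(\widetilde{\ba}_1,\dots,\widetilde{\ba}_k,\bn_1,\dots,\bn_l)$ with $\bn_1,\dots,\bn_l$ generating $N$ and then projecting to the first $k$ coordinates; likewise the $T$-elimination for $A \cap B$ has to be carried out on the preimages $\widetilde{A}+N$ and $\widetilde{B}+N$ and then read back in the quotient. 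Second, for submodule membership over the non-field ring $\Z$ you need a \emph{strong} Gr\"obner basis (one in which the leading term of any element is reducible by the leading term of a single basis element), which requires enlarging Buchberger's criterion beyond plain S-pairs; the slogan about gcd's of leading coefficients is right, but a precise development (e.g.\ Kandri-Rody and Kapur, or Adams and Loustaunau) should be cited for termination and correctness. Neither point is a gap in the underlying argument; the approach is sound and is consistent with how the cited result is normally established.
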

In particular, taking $\mA \coloneqq \Z[X^{\pm}]$, Lemma~\ref{lem:classicdec}(i) becomes the well-known \emph{Ideal Membership} problem: given elements $\ba_1, \ldots, \ba_k, \ba \in \Z[X^{\pm}]$, decide whether $\ba$ is in the ideal generated by $\ba_1, \ldots, \ba_k$.
For Lemma~\ref{lem:classicdec}(ii), it states that one can compute the generators for the solution set of any homogeneous linear equation.
Alternatively, Lemma~\ref{lem:classicdec}(ii) can be understood as a procedure to compute the finite presentation $\Z[X^{\pm}]^k/S$ of the module $\sum_{i = 1}^k \Z[X^{\pm}] \cdot \ba_i$.

The following lemma shows we can effectively compute the intersection of a submodule of $\Z[X^{\pm}]^k$ with $\Z^k$.

\begin{lem}[{\cite[Corollary~2.5(2)]{baumslag1981computable}}]\label{lem:decinterZ}
    Suppose we are given $k \in \N$ and elements $\bg_1, \ldots, \bg_n$ of the $\Z[X^{\pm}]$-module $\Z[X^{\pm}]^k$.
    Let $\mM$ denote the $\Z[X^{\pm}]$-module generated by $\bg_1, \ldots, \bg_n$, and define $\Lambda \coloneqq \mM \cap \Z^k$.
    Then $\Lambda \subseteq \Z^k$ is a $\Z$-module, and a finite set of generators for $\Lambda$ can be effectively computed.
\end{lem}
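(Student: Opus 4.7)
The plan combines three ingredients from effective commutative algebra: a reduction from the Laurent polynomial ring to the ordinary polynomial ring, an iterative $X$-saturation, and a Gröbner-basis elimination. Note first that $\Lambda = \mM \cap \Z^k$ is a $\Z$-submodule of the Noetherian $\Z$-module $\Z^k$, so it is automatically finitely generated; the content of the lemma is computing a finite generating set.

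The first step is a reduction to $\Z[X]$. Multiply each $\bg_i$ by a sufficient power $X^{N_i}$ to obtain $\bg'_i := X^{N_i}\bg_i \in \Z[X]^k$, and set $\mM' := \sum_i \Z[X]\cdot\bg'_i$. Since $X$ is invertible in $\Z[X^{\pm}]$, a direct clearing-of-denominators argument shows
\[
\mM \cap \Z^k \;=\; \mathrm{Sat}_X(\mM') \cap \Z^k, \qquad \mathrm{Sat}_X(\mM') := \bigcup_{N \geq 0} \bigl\{\bv \in \Z[X]^k : X^N \bv \in \mM'\bigr\}.
\]
The saturation is then computed by iterating $L_0 := \mM'$ and $L_{j+1} := \{\bv \in \Z[X]^k : X\bv \in L_j\}$: a generating set of $L_{j+1}$ is obtained by projecting the syzygy module of $(X\mathbf{e}_1, \ldots, X\mathbf{e}_k, \bh_1, \ldots, \bh_m)$ onto the first $k$ coordinates, where $\bh_1, \ldots, \bh_m$ generate $L_j$; this uses Lemma~\ref{lem:classicdec}(ii) in the $\Z[X]$-setting. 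The chain stabilizes by Noetherianity of $\Z[X]^k$, and stabilization is effectively detected via Submodule Membership (Lemma~\ref{lem:classicdec}(i)).

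The final and hardest step is to extract $L \cap \Z^k$ for $L := \mathrm{Sat}_X(\mM') \subseteq \Z[X]^k$. Since $\Z^k$ is not itself a $\Z[X]$-submodule of $\Z[X]^k$, the module-theoretic primitives of Lemma~\ref{lem:classicdec} do not apply directly. Instead, I would compute a Gröbner basis of $L$ with respect to a term order placing every monomial involving $X$ strictly above every constant monomial (for instance, position-over-term refined by $X > 1$). A standard elimination argument then shows that the Gröbner-basis elements lying in $\Z^k$ form a $\Z$-generating set of $L \cap \Z^k$. The technical delicacy---and the chief obstacle---is that the base ring is $\Z$ rather than a field, so one must rely on Buchberger's algorithm adapted to modules over a Noetherian coefficient ring; this is one of the computer-algebra ingredients underpinning~\cite{baumslag1981computable}.
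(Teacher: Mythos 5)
The paper does not prove this lemma: it cites it verbatim from Baumslag--Cannonito--Miller, [Corollary~2.5(2)] of~\cite{baumslag1981computable}, as one of the black-box primitives of effective module theory. So there is no in-paper proof to compare against, and what you have supplied is a free-standing reconstruction. That reconstruction is correct and essentially complete. The identity $\mM \cap \Z^k = \mathrm{Sat}_X(\mM') \cap \Z^k$ is right: for $\bv \in \Z[X]^k$ one has $\bv \in \mM$ iff $X^N\bv \in \mM'$ for some $N \geq 0$ (clear denominators), and the saturation is genuinely needed --- for $k=1$ and $\mM$ generated by $X+2$ and $X-2$ one gets $\mM' \cap \Z = 4\Z$ but $\mM \cap \Z = \Z$, which the saturation recovers since $X^2 \in \mM'$. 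Your computation of $(L_j : X)$ by projecting a syzygy module onto the first $k$ coordinates is correct, the chain $L_j = (\mM':X^j)$ stabilizes by Noetherianity, and stabilization is detected by Submodule Membership. The final elimination step is also correct: with a term order on $\Z[X]^k$ in which every $X^d\boldsymbol{e}_i$ with $d \geq 1$ exceeds every $\boldsymbol{e}_j$, reduction of any $\bv \in L \cap \Z^k$ by a strong Gr\"obner basis $G$ uses only the reducers of $G$ lying in $\Z^k$ (since their leading terms must then be constant, and a constant leading term forces the whole element into $\Z^k$), so $G \cap \Z^k$ generates $L \cap \Z^k$ over $\Z$.

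One small slip to fix: the parenthetical example of a suitable order, ``position-over-term refined by $X > 1$,'' names the wrong convention. A position-over-term order compares module positions first, so it would put $\boldsymbol{e}_1 > X^{100}\boldsymbol{e}_2$ and would \emph{not} eliminate $X$. What you actually need is a term-over-position order (or any $X$-degree-dominant order), and your preceding clause --- ``placing every monomial involving $X$ strictly above every constant monomial'' --- already describes exactly that, so only the label needs correcting.
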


Recall that for any $d \geq 1$, a $\Z[X^{\pm}]$-module is naturally a $\Z[X^{\pm d}]$-module.
In particular, $\Z[X^{\pm}]^D$ is isomorphic as a $\Z[X^{\pm d}]$-module to $\Z[X^{\pm d}]^{Dd}$, and any finitely presented $\Z[X^{\pm}]$-module can be considered as a finitely presented $\Z[X^{\pm d}]$-module:

\begin{restatable}{lem}{lemchangebase}\label{lem:changebase}
    Let $d \geq 2$.
    Given a finite presentation of a $\Z[X^{\pm}]$-module $\mA$, one can compute a finite presentation of $\mA$ as a $\Z[X^{\pm d}]$-module.
    Furthermore, let $\ba \in \mA$ be given in the finite presentation of $\mA$ as $\Z[X^{\pm}]$-module, then one can compute the representation of $\ba$ in $\mA$ considered as a $\Z[X^{\pm d}]$-module.
\end{restatable}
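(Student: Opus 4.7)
The plan is to exploit the fact that $\Z[X^{\pm}]$ is a free $\Z[X^{\pm d}]$-module of rank $d$, with the explicit basis $1, X, X^2, \ldots, X^{d-1}$. Every Laurent polynomial $f \in \Z[X^{\pm}]$ decomposes uniquely as $f = \sum_{i=0}^{d-1} X^i \, f_i(X^d)$, simply by grouping monomials $a_k X^k$ according to the residue $k \bmod d$; this decomposition is effective, since the $f_i$'s are obtained by inspecting the exponents. This gives a computable $\Z[X^{\pm d}]$-module isomorphism $\Z[X^{\pm}]^D \xrightarrow{\sim} \Z[X^{\pm d}]^{Dd}$, sending $(f^{(1)}, \ldots, f^{(D)})$ to the tuple of its $d$ residue components in each coordinate.

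Given the finite presentation $\mA = M/N$ with $M, N \subseteq \Z[X^{\pm}]^D$ generated as $\Z[X^{\pm}]$-modules by $\bg_1, \ldots, \bg_m$ and $\bn_1, \ldots, \bn_\ell$ respectively, I would first produce $\Z[X^{\pm d}]$-generators of $M$ and $N$ as follows: for each $\Z[X^{\pm}]$-generator $\bg_j$, output the $d$ elements $\bg_j, X \bg_j, X^2 \bg_j, \ldots, X^{d-1} \bg_j$, viewed inside $\Z[X^{\pm d}]^{Dd}$ via the isomorphism above. Since $\Z[X^{\pm}] = \sum_{i=0}^{d-1} X^i \, \Z[X^{\pm d}]$, these $md$ elements generate $M$ as a $\Z[X^{\pm d}]$-module, and analogously the $\ell d$ elements $X^i \bn_k$ generate $N$. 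This yields a finite presentation of $\mA$ as a $\Z[X^{\pm d}]$-module.

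For the second assertion, an element $\ba \in \mA$ is given as $\overline{\bm}$ for some $\bm \in \Z[X^{\pm}]^D$, and I apply the same coordinatewise mod-$d$ decomposition to $\bm$ to obtain its representative in $\Z[X^{\pm d}]^{Dd}$ with respect to the new presentation. Uniqueness modulo $N$ is automatic, because the underlying abelian group and its submodule $N$ are unchanged; only the scalar ring has shrunk.

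There is essentially no hard step here: the proof is a routine unravelling of the free module structure of $\Z[X^{\pm}]$ over $\Z[X^{\pm d}]$. The only mild care needed is to make explicit that Laurent exponents (positive and negative) split cleanly into $d$ arithmetic-progression classes, so that the decomposition into $\sum_{i=0}^{d-1} X^i f_i(X^d)$ is both well-defined and algorithmically trivial on the input representation. Everything else is bookkeeping.
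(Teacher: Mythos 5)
Your proposal is correct and takes essentially the same approach as the paper: both rely on the $\Z[X^{\pm d}]$-module isomorphism $\Z[X^{\pm}]^D \cong \Z[X^{\pm d}]^{Dd}$ obtained by decomposing Laurent polynomials according to exponent residues modulo $d$. You are in fact slightly more careful than the paper's terse phrasing, since you explicitly note that a $\Z[X^{\pm}]$-generator $\bg_j$ must be replaced by the $d$ elements $\bg_j, X\bg_j, \ldots, X^{d-1}\bg_j$ to obtain $\Z[X^{\pm d}]$-generators of the same submodule.
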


\paragraph*{Abelian-by-cyclic groups}
We now formally define abelian-by-cyclic groups, the main object of study in this paper.
\begin{defn}
    A group $G$ is called \emph{abelian-by-cyclic} if it admits an abelian normal subgroup $A$ such that $G/A \cong \Z$.
\end{defn}


It is a classic result~\cite[p.17]{boler1976conjugacy} that every finitely generated abelian-by-cyclic group $G$ can be written as a \emph{semidirect product} $\mA \rtimes \Z$:
\begin{equation}
    \mA \rtimes \Z \coloneqq \left\{ (\ba, z) \;\middle|\; \ba \in \mA, z \in \Z \right\},
\end{equation}
where $\mA$ is a finitely presented $\Z[X^{\pm}]$-module.
The group law in $\mA \rtimes \Z$ is defined by
\[
(\ba, z) \cdot (\ba', z') = (\ba + X^z \cdot \ba', z + z'), \quad (\ba, z)^{-1} = (- X^{-z} \cdot \ba, -z).
\]
The neutral element of $\mA \rtimes \Z$ is $(\bzer, 0)$.
Intuitively, the element $(\ba, z)$ is analogous to a $2 \times 2$ matrix
$
\begin{pmatrix}
X^{z} & \ba \\
0 & 1 \\
\end{pmatrix}
$, where group multiplication is represented by matrix multiplication.
By direct computation, for all $m \in \Z$, we have
\begin{align*}
(\ba, z)^m = 
\begin{cases}
    \left(\frac{X^{mz} - 1}{X^z - 1} \cdot \ba, mz\right), \quad & z \neq 0, \\
    \left(m \cdot \ba, 0\right), \quad & z = 0. \\
\end{cases}
\end{align*}
We naturally identify $\mA$ with the subgroup $\left\{ (\ba, 0) \;\middle|\; \ba \in \mA\right\}$ of $\mA \rtimes \Z$.
In particular, the quotient $\left( \mA \rtimes \Z \right)/\mA$ is isomorphic to $\Z$, so $\mA \rtimes \Z$ is indeed abelian-by-cyclic.

\begin{example}
    If we take $\mA \coloneqq (\Z/p\Z)[X, X^{-1}] = \Z[X^{\pm}]/\left(\Z[X^{\pm}] \cdot p\right)$, then we recover the definition~\eqref{eq:defwr} of the group $(\Z/p\Z) \wr \Z$.
    If we take $\mA \coloneqq \Z[1/p] = \Z[X^{\pm}]/\left(\Z[X^{\pm}] \cdot (X - p)\right)$, then we recover the definition~\eqref{eq:defbs} of the group $\mathsf{BS}(1, p)$.
\end{example}

Throughout this paper, a finitely generated abelian-by-cyclic group $G$ is always represented as the semidirect product $\mA \rtimes \Z$, where $\mA$ is a $\Z[X^{\pm}]$-module given by a finite presentation.

\section{Main results and overview}
The main result of this paper is the following.

\begin{theorem}\label{thm:main}
    Subgroup Intersection and Coset Intersection are decidable for finitely generated abelian-by-cyclic groups.
\end{theorem}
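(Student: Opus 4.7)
The proof plan is to project both subgroups to $\Z \cong (\mA\rtimes\Z)/\mA$, describe each ``slice'' above $z \in \Z$ as an explicit affine translate of a submodule of $\mA$, and then reduce the intersection condition to (a) a standard submodule intersection problem solvable via Lemma~\ref{lem:classicdec}, and (b) the Shifted Monomial Membership problem for $\Z[X^\pm]$ (whether an ideal contains an element of the form $X^z - f$ for some $z \in \Z\setminus\{0\}$), whose decidability is quoted from Noskov~\cite{noskov1982conjugacy}.

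Write $\mG = \{(\ba_i,z_i)\}_{i=1}^{k}$ and let $d_\mG \coloneqq \gcd(z_1,\ldots,z_k)$; when $d_\mG = 0$ the subgroup $\gen{\mG}$ already lies inside $\mA$ and the situation is elementary, so assume $d_\mG \neq 0$. The extended Euclidean algorithm produces an explicit word realising some $g_0=(\ba_0,d_\mG)\in\gen{\mG}$. A direct computation shows $g_0(\bc,0)g_0^{-1}=(X^{d_\mG}\cdot\bc,0)$, hence $\Lambda_\mG \coloneqq \gen{\mG}\cap\mA$ is a $\Z[X^{\pm d_\mG}]$-submodule of $\mA$, and finite generators $\bc_1,\ldots,\bc_k$ for it can be read off as $g_0^{-n_i}g_i$ where $z_i = n_i d_\mG$. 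Using the identity $g_0^n=\left(\frac{X^{n d_\mG}-1}{X^{d_\mG}-1}\cdot\ba_0,\,n d_\mG\right)$ and the fact that every element of $\gen{\mG}$ above $nd_\mG$ has the form $g_0^n\cdot(\bc,0)$ with $(\bc,0)\in\Lambda_\mG$, the slice of $\gen{\mG}$ above $nd_\mG$ equals the coset $\frac{X^{nd_\mG}-1}{X^{d_\mG}-1}\cdot\ba_0+\Lambda_\mG$. An analogous description holds for $\gen{\mH}$ with parameters $d_\mH,\bb_0,\Lambda_\mH$.

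Subgroup Intersection then splits according to the projection $z$ of the hypothetical common element. For $z=0$ the question reduces to deciding whether $\Lambda_\mG\cap\Lambda_\mH\neq\{\bzer\}$; after applying Lemma~\ref{lem:changebase} with $d\coloneqq\lcm(d_\mG,d_\mH)$ to pass to a common base $\Z[X^{\pm d}]$, this is handled by Lemmas~\ref{lem:classicdec}(iii) and~\ref{lem:decinterZ}. For $z=\ell d$ with $\ell\neq 0$, one must decide whether there is some $\ell\neq 0$ with
\[
\frac{X^{\ell d}-1}{X^{d_\mG}-1}\cdot\ba_0 \;-\; \frac{X^{\ell d}-1}{X^{d_\mH}-1}\cdot\bb_0 \;\in\; \Lambda_\mG+\Lambda_\mH.
\]
Passing to the quotient $\overline\mA\coloneqq\mA/(\Lambda_\mG+\Lambda_\mH)$ and clearing the factors $X^{d_\mG}-1$, $X^{d_\mH}-1$, this becomes a module-level shift condition $X^{\ell d}\cdot \bu = \bv$ in a computable finitely presented $\Z[X^\pm]$-module, which, by isolating the cyclic submodule generated by $\bu$ and extracting its annihilator ideal via Lemma~\ref{lem:classicdec}(ii), converts into an instance of Shifted Monomial Membership for an ideal of $\Z[X^\pm]$. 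Coset Intersection reduces in the same way: after replacing $h\gen{\mH}$ by $\gen{\mH}$ shifted on one side, the coset representative is absorbed into a shift of $\bv$, and the condition $\ell\neq 0$ may or may not be required depending on the projection of $h$, both cases being covered by Shifted Monomial Membership.

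The main difficulty is the last step, turning the module-level condition $X^{\ell d}\cdot\bu=\bv$ in a finitely presented $\Z[X^\pm]$-module into a single ideal-membership question of the form $X^z - f \in I$. The modules arising here are not generally cyclic, so one must either decompose using the structure theory of finitely generated modules over $\Z[X^\pm]$ or replace the single condition by a finite disjunction over cyclic quotients, while tracking annihilators so that torsion phenomena (for instance $\bu$ being a zero-divisor, or $\bv\notin\Z[X^\pm]\cdot\bu$) do not create false positives. Once this bookkeeping is organised and the ``$z\neq 0$'' constraint is preserved throughout, invoking the decidability of Shifted Monomial Membership completes the proof of Theorem~\ref{thm:main}.
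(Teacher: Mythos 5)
Your reduction of both intersection problems to Shifted Monomial Membership (SMM) follows essentially the same route as the paper: split on the projection $z=0$ versus $z\neq 0$, pass to a common base ring $\Z[X^{\pm d}]$ with $d=\lcm(d_{\mG},d_{\mH})$ via Lemma~\ref{lem:changebase}, express the slice condition as $X^{zd}\cdot\bu \equiv \bv$ modulo a computable submodule, and extract an ideal using Lemma~\ref{lem:classicdec}(ii). Two points differ. First, your worry about non-cyclic modules is misplaced and leads you to propose heavier machinery than is needed (structure theory or a disjunction over cyclic quotients). No decomposition is required: the transporter ideal $\mI'=\{f\in\Z[X^{\pm d}]:f\cdot\bu\in N\}$ is an ideal computable directly by syzygies regardless of the module's structure, and after first testing whether $\bv$ lies in the $\Z[X^{\pm d}]$-submodule generated by $\bu$ and $N$ (Lemma~\ref{lem:classicdec}(i)) and, if so, computing a particular witness $f_0$, the whole solution set is exactly $f_0+\mI'$ (as in the paper's Lemma~\ref{lem:Cosettoideal}), so the condition is precisely $X^{zd}-f_0\in\mI'$. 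This also resolves, rather than merely flags, your ``torsion/zero-divisor'' concern. Second, and more substantively, you invoke Noskov~\cite{noskov1982conjugacy} for the decidability of SMM, whereas the paper reproves SMM from scratch in Section~\ref{sec:smm} by a case analysis on $\gcd(\tI)$, using finiteness of $\Z[X]/(c,g)$, heights of algebraic numbers, and cyclotomic/square-free structure. Citing Noskov is legitimate and shorter; the paper's self-contained proof is one of its stated contributions and is arguably more transparent. Your treatment of Cases 1/2 (a subgroup contained in $\mA$) and of the Coset bookkeeping, including the solvability of $m d_{\mG}=n d_{\mH}+z_h$ and the separate check for $z=0$, is only sketched, but the outline is consistent with what the paper does.
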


Our proof of Theorem~\ref{thm:main} is divided into two parts.
The first part is to reduce both Subgroup Intersection and Coset Intersection to the \emph{Shifted Monomial Membership} problem:

\begin{defn}\label{def:SMM}
    \emph{Shifted Monomial Membership} is the following decision problem.
    Given as input a finite set of generators of an ideal $\mI \subseteq \Z[X^{\pm}]$, as well as a Laurent polynomial $f \in \Z[X^{\pm}]$, decide if there exists $z \in \Z \setminus \{\bzer\}$ such that $X^z - f \in \mI$.
\end{defn}

The reduction from Subgroup Intersection and Coset Intersection to Shifted Monomial Membership combines various classic techniques from effective computation of finitely presented $\Z[X^{\pm}]$-modules, and will be shown in Section~\ref{sec:reduction}.
The main difficulty in this part is the interaction of modules over different base rings.
Our key is to adaptively change the base rings when combining different modules.

The second part is to prove decidability of Shifted Monomial Membership: this will be shown in Section~\ref{sec:smm}.
As written in the introduction, we provide a more direct proof than that of Noskov~\cite{noskov1982conjugacy}.
We will use structural theorems to classify ideals of $\Z[X]$ and consider each case separately.
In some cases, we employ arguments of \emph{height of algebraic numbers} to produce a bound on $|z|$ whenever $X^z - f \in \mI$.
In other cases, we will show certain periodicity stemming from the finiteness of quotients or from roots of unity.
As a result, in all cases it suffices to verify whether $X^z - f \in \mI$ for a finite number of $z$.

Omitted proofs can be found in Appendix~\ref{app:proof}.
A summary of the algorithms for deciding Subgroup Intersection, Coset Intersection, and Shifted Monomial Membership is given in Appendix~\ref{app:alg}.

\section{Reduction to Shifted Monomial Membership}\label{sec:reduction}
Let $\mA$ be a $\Z[X^{\pm}]$-module given by a finite presentation.
Recall that we naturally identify $\mA$ with the subgroup $\left\{ (\ba, 0) \;\middle|\; \ba \in \mA\right\}$ of $\mA \rtimes \Z$; that is, we will sometimes write $\ba$ instead of $(\ba, 0)$ when the context is clear.
We start with a lemma that effectively describes finitely generated subgroups of $\mA \rtimes \Z$.
Such a description follows from the general description of subgroups of finitely generated metabelian groups~\cite[proof of Theorem~1]{romanovskii1974some}.
Here we give a systematic reformulation in the context of abelian-by-cyclic groups.
\begin{restatable}[Structural theorem of abelian-by-cyclic groups, see also~\cite{romanovskii1974some}]{lem}{lemstruct}\label{lem:struct}
    Let $\gen{\mG}$ be a subgroup of $\mA \rtimes \Z$ generated by the elements $g_1 \coloneqq (\ba_1, z_1), \ldots, g_K \coloneqq (\ba_K, z_K)$.
    Then
    \begin{enumerate}[(i)]
        \item If $z_1 = \cdots = z_K = 0$, then $\gen{\mG}$ is contained in $\mA$ and it is the $\Z$-module generated by $\ba_1, \ldots, \ba_K$.
        \item If $z_1, \ldots, z_K$ are not all zero, then $\gen{\mG} \not\subset \mA$.
        Let $d \in \N$ denote the greatest common divisor of $z_1, \ldots, z_K$. Consider the lattice \[
        \Lambda \coloneqq \left\{(s_1, \ldots, s_K) \in \Z^K \;\middle|\; s_1 z_1 + \cdots + s_K z_K = 0\right\}.
        \]
        Let $(s_{11}, \ldots, s_{1K}), \ldots, (s_{T1}, \ldots, s_{TK})$ be a finite set of generators for $\Lambda$.
        Then $\gen{\mG} \cap \mA$ is a $\Z[X^{\pm d}]$-submodule of $\mA$, generated by the set of elements
        \begin{equation}\label{eq:genmod}
        S \coloneqq \left\{g_i g_j g_i^{-1} g_j^{-1} \;\middle|\; 1 \leq i < j \leq K \right\} \cup \left\{g_1^{s_{i1}} \cdots g_K^{s_{iK}} \;\middle|\; i \in [1, T] \right\}.
        \end{equation}
        \item In case (ii), let $\ba \in \mA$ be any element such that $(\ba, d) \in \gen{\mG}$.
        Then $\gen{\mG}$ is generated by $\gen{\mG} \cap \mA$ and $(\ba, d)$ as a group.
        In other words, every element of $\gen{\mG}$ can be written as $(\bb, 0) \cdot (\ba, d)^m$ for some $\bb \in \gen{\mG} \cap \mA$ and $m \in \Z$.
    \end{enumerate}
\end{restatable}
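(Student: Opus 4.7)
The plan is to treat (i)--(iii) separately, with (ii) carrying the bulk of the work. Part (i) is immediate: when every $z_i=0$, the product in $\mA\rtimes\Z$ restricted to elements of the form $(\ba,0)$ collapses to ordinary addition in $\mA$, so $\gen{\mG}$ is exactly the $\Z$-span of $\ba_1,\ldots,\ba_K$. Part (iii) should reduce to B\'ezout: since $\gcd(z_1,\ldots,z_K)=d$, there are integers $c_1,\ldots,c_K$ with $\sum_i c_iz_i=d$, so the product $g_1^{c_1}\cdots g_K^{c_K}\in\gen{\mG}$ has second coordinate $d$, producing a witness $(\ba,d)$; any other $(\ba',d')\in\gen{\mG}$ has $d'=md$ for some $m\in\Z$ by (ii), hence $(\ba',d')\cdot(\ba,d)^{-m}$ lies in $\gen{\mG}\cap\mA$, giving the decomposition $(\bb,0)(\ba,d)^m$.

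For (ii), I would first record the conjugation action: a direct calculation in the semidirect product gives $(\ba,z)\cdot(\bb,0)\cdot(\ba,z)^{-1}=(X^z\cdot\bb,0)$. The projection $\pi\colon\mA\rtimes\Z\to\Z$ sends $\gen{\mG}$ onto $d\Z$, so conjugation by an arbitrary element of $\gen{\mG}$ acts on $\gen{\mG}\cap\mA$ as multiplication by some $X^{kd}$. This simultaneously exhibits $\gen{\mG}\cap\mA$ as a $\Z[X^{\pm d}]$-submodule of $\mA$ and shows that taking $\gen{\mG}$-conjugates of any subset of $\mA$ coincides with forming $\Z[X^{\pm d}]$-multiples. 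A quick check confirms $S\subseteq\gen{\mG}\cap\mA$: the commutators lie in $\mA$ by metabelian-ness, and each word $g_1^{s_{i1}}\cdots g_K^{s_{iK}}$ has second coordinate $\sum_j s_{ij}z_j=0$.

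The crux is then to show that $S$ generates $\gen{\mG}\cap\mA$ as a $\Z[X^{\pm d}]$-module. I plan to lift the question to the free group $F_K$ on abstract letters $g_1,\ldots,g_K$, using the surjection $\phi\colon F_K\to\gen{\mG}$ and the composition $\psi\coloneqq\pi\circ\phi\colon F_K\to\Z$ sending $g_i\mapsto z_i$. Factoring $\psi$ through the abelianisation $F_K\to\Z^K\to\Z$, the kernel $\Lambda$ of $\Z^K\to\Z$ is by construction generated by the rows $(s_{i1},\ldots,s_{iK})$, so a standard presentation argument shows that $\ker\psi$ is the normal closure in $F_K$ of $\{[g_i,g_j]\}\cup\{g_1^{s_{i1}}\cdots g_K^{s_{iK}}\}$. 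Applying $\phi$ and observing $\gen{\mG}\cap\mA=\phi(\ker\psi)$ identifies $\gen{\mG}\cap\mA$ with the normal closure of $S$ inside $\gen{\mG}$, which by the conjugation computation above is precisely the $\Z[X^{\pm d}]$-submodule generated by $S$.

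The step I expect to require the most care is the normal-closure identification in $F_K$: although standard, it is where the roles of the commutator subgroup and of the lattice $\Lambda$ genuinely interact, and it must be verified cleanly that lifting generators of $\Lambda$ together with the commutators $\{[g_i,g_j]\}$ recovers all of $\ker\psi$ rather than a proper normal subgroup. Once this is in place, everything else reduces to the initial conjugation calculation and to B\'ezout's identity.
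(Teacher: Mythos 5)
Your proposal is correct. Parts (i) and (iii) agree with the paper up to notation. For part (ii), however, you take a genuinely different route. The paper argues directly with words: it shows that modulo the $\Z[X^{\pm d}]$-span of the commutators one may freely permute factors in any product $g_{i_1}^{\epsilon_1}\cdots g_{i_n}^{\epsilon_n} \in \gen{\mG}\cap\mA$, observes that the resulting exponent-sum vector must lie in $\Lambda$, and then rewrites the sorted product as a combination of conjugates of the lattice-generator words $g_1^{s_{i1}}\cdots g_K^{s_{iK}}$. You instead lift to the free group $F_K$, use the standard presentation fact that $\ker(\psi)$ is the normal closure of the commutators together with lifts of generators of $\Lambda$ (this follows since $F_K/N \cong \Z^K/\Lambda \cong \Z \cong F_K/\ker\psi$, with the surjection between them forced to be an isomorphism by the Hopficity of $\Z$), and then push down via $\phi$, noting that $\phi$ carries normal closures to normal closures and that the normal closure of a subset of $\mA$ inside $\gen{\mG}$ is exactly its $\Z[X^{\pm d}]$-span because conjugation acts through $\pi(\gen{\mG})=d\Z$. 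Both arguments are sound; yours is more conceptual and foregrounds the presentation-theoretic content (which also makes it clearer why exactly the commutators and the $\Lambda$-words suffice and nothing more is needed), whereas the paper's is more elementary and self-contained, making no appeal to free groups or Hopficity. You correctly flag the normal-closure identification as the delicate step: the point to make explicit when writing it up is precisely the Hopfian argument (or, equivalently, a direct check that $\ker\psi/N$ is trivial), since $N\subseteq\ker\psi$ is obvious but equality requires that the two copies of $\Z$ cannot surject onto one another non-injectively.
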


We point out that in case~(ii), the subgroup $\gen{\mG} \cap \mA$ is finitely generated as a $\Z[X^{\pm d}]$-module; but it is not necessarily finitely generated as a group.

\begin{example}
    Let $\mA = \Z[X^{\pm}]$, considered as a $\Z[X^{\pm}]$-module.
    Let $\gen{\mG}$ be the subgroup of $\mA \rtimes \Z$ generated by the elements $g_1 = (X, 4), g_2 = (1+X, -6)$.
    Then $d = 2$, and $\gen{\mG} \cap \mA$ is the $\Z[X^{\pm 2}]$-module generated by the elements
    \begin{equation*}
    g_1 g_2 g_1^{-1} g_2^{-1} = (X^5 + X^4 - 1 - X^{-5}, 0), \quad
    g_1^3 g_2^2 = (X^{13} + X^{12} + X^{9} + X^{7} + X^{6} + X^{5} + X, 0).
    \end{equation*}

    For example, consider the element $g_1^2 g_2 g_1 g_2 \in \gen{\mG}$.
    By direct computation, its second entry is zero, therefore $g_1^2 g_2 g_1 g_2 \in \gen{\mG} \cap \mA$.
    Furthermore, $g_1^2 g_2 g_1 g_2$ can be written as 
    \begin{multline*}
        g_1^2 g_2 g_1 g_2 = g_1^2 (g_2 g_1) g_2 = g_1^2 (g_2 g_1 g_2^{-1} g_1^{-1}) (g_1 g_2) g_2 = g_1^2 (g_1 g_2 g_1^{-1} g_2^{-1})^{-1} g_1 g_2^2 \\
        = g_1^2 (g_1 g_2^2) (g_1 g_2^2)^{-1} (g_1 g_2 g_1^{-1} g_2^{-1})^{-1} (g_1 g_2^2) = g_1^3 g_2^2 \cdot (g_1 g_2^2)^{-1} (g_1 g_2 g_1^{-1} g_2^{-1})^{-1} (g_1 g_2^2) \\
        = (X^{13} + X^{12} + X^{9} + X^{7} + X^{6} + X^{5} + X, 0) \cdot (g_1 g_2^2)^{-1} (X^5 + X^4 - 1 - X^{-5}, 0)^{-1} (g_1 g_2^2) \\
        = (X^{13} + X^{12} + X^{9} + X^{7} + X^{6} + X^{5} + X) + X^{-8} \cdot (-1) \cdot (X^5 + X^4 - 1 - X^{-5}).
    \end{multline*}
    It is therefore indeed in the $\Z[X^{\pm 2}]$-module generated by $g_1 g_2 g_1^{-1} g_2^{-1} = X^5 + X^4 - 1 - X^{-5}$ and $g_1^3 g_2^2 = X^{13} + X^{12} + X^{9} + X^{7} + X^{6} + X^{5} + X$.

    Intuitively, modulo the generator $g_1 g_2 g_1^{-1} g_2^{-1}$, one can permute letters in any word over $\mG$ (in the above example, $g_1^2 g_2 g_1 g_2$ is congruent to $g_1^3 g_2^2$).
    Whereas the generator $g_1^3 g_2^2$ guarantees the second entry of the product to be zero.
\end{example}

Let $\gen{\mG}, \gen{\mH}$ be finitely generated subgroups of $\mA \rtimes \Z$ given by their respective generators, let $h$ be an element of $\mA \rtimes \Z$.
We now consider Subgroup and Coset Intersection for $\gen{\mG}$ and $\gen{\mH}$.
We split into three cases according to whether $\gen{\mG}$ and $\gen{\mH}$ are contained in the subgroup $\mA$.
If at least one of $\gen{\mG}$ and $\gen{\mH}$ is contained in $\mA$ (Case 1 and 2 below), then the solutions to Subgroup and Coset Intersection are relatively straightforward using the standard tools introduced in Lemma~\ref{lem:classicdec} and \ref{lem:decinterZ}.
They do not need to be reduced to Shifted Monomial Membership.
If neither $\gen{\mG}$ nor $\gen{\mH}$ is contained in the subgroup $\mA$ (Case 3 below), then the solution is more complicated and we reduce both Subgroup and Coset Intersection to Shifted Monomial Membership.

\subsection{Case 1: $\gen{\mG}$ and $\gen{\mH}$ are both contained in $\mA$}

Suppose $\gen{\mG}$ is generated by the elements $g_1 = (\ba_1, 0), \ldots, g_K = (\ba_K, 0)$, and $\gen{\mH}$ is generated by the elements $h_1 = (\ba'_1, 0), \ldots, h_M = (\ba'_M, 0)$.

\paragraph*{Subgroup Intersection}
In this case, we have
\[
\gen{\mG} = \{y_1 \cdot \ba_1 + \cdots + y_K \cdot \ba_K \mid y_1, \ldots, y_K \in \Z\}, \quad \gen{\mH} = \{z_1 \cdot \ba'_1 + \cdots + z_M \cdot \ba'_M \mid z_1, \ldots, z_M \in \Z\}.
\]
Then $\gen{\mG} \cap \gen{\mH} = \{e\}$ if and only if every element of $\gen{\mG} \cap \gen{\mH}$ is equal to the neutral element.
This means that every solution of $y_1 \cdot \ba_1 + \cdots + y_K \cdot \ba_K = z_1 \cdot \ba'_1 + \cdots + z_M \cdot \ba'_M, \; y_1, \ldots, y_K, z_1, \ldots, z_M \in \Z$ is also a solution of $y_1 \cdot \ba_1 + \cdots + y_K \cdot \ba_K = \bzer$.

Let $\mM$ denote the $\Z[X^{\pm}]$-module
\begin{multline}\label{eq:M1s}
\mM \coloneqq \Big\{ (y_1, \ldots, y_K, z_1, \ldots, z_M) \in \Z[X^{\pm}]^{K+M} \;\Big|\; \\
y_1 \cdot \ba_1 + \cdots + y_K \cdot \ba_K - z_1 \cdot \ba'_1 - \cdots - z_M \cdot \ba'_M = \bzer \Big\},
\end{multline}
and let $\mZ$ denote the $\Z[X^{\pm}]$-module
\begin{equation}\label{eq:Z1s}
\mZ \coloneqq \Big\{ (y_1, \ldots, y_K, z_1, \ldots, z_M) \in \Z[X^{\pm}]^{K+M} \;\Big|\; y_1 \cdot \ba_1 + \cdots + y_K \cdot \ba_K = \bzer \Big\}.
\end{equation}
Then the statement above can be summarized as follows.

\begin{obs}\label{obs:C1SI}
    We have $\gen{\mG} \cap \gen{\mH} = \{e\}$ if and only if $\mM \cap \Z^{K+M} = (\mM \cap \mZ) \cap \Z^{K+M}$.
\end{obs}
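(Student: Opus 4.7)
The plan is to prove the equivalence by directly unfolding the definitions of $\mM$ and $\mZ$; since all generators $g_i, h_j$ lie in the abelian part $\mA$ (no twisting by $X$ enters), the argument is essentially bookkeeping on top of Lemma~\ref{lem:struct}(i).

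First I would record what Lemma~\ref{lem:struct}(i) says in this setting: because every second coordinate vanishes, $\gen{\mG}$ is exactly the $\Z$-module $\Z \cdot \ba_1 + \cdots + \Z \cdot \ba_K$ sitting inside $\mA$, and likewise $\gen{\mH} = \Z \cdot \ba'_1 + \cdots + \Z \cdot \ba'_M$. Consequently, an element $g \in \mA$ lies in $\gen{\mG} \cap \gen{\mH}$ if and only if there is an integer tuple $(y_1,\ldots,y_K,z_1,\ldots,z_M) \in \Z^{K+M}$ with $g = y_1 \cdot \ba_1 + \cdots + y_K \cdot \ba_K = z_1 \cdot \ba'_1 + \cdots + z_M \cdot \ba'_M$, i.e.\ precisely when that tuple lies in $\mM \cap \Z^{K+M}$, per definition~\eqref{eq:M1s}.

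Next I would unpack membership in $\mM \cap \mZ$. The module $\mZ$ in~\eqref{eq:Z1s} forces $y_1 \cdot \ba_1 + \cdots + y_K \cdot \ba_K = \bzer$, and $\mM$ forces this sum to equal $z_1 \cdot \ba'_1 + \cdots + z_M \cdot \ba'_M$; together they make both sums vanish. So the integer tuples in $(\mM \cap \mZ) \cap \Z^{K+M}$ are exactly those whose common value in the two sums is the neutral element of $\mA$.

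With these two translations in place, the equivalence drops out. For the forward direction, assuming $\gen{\mG} \cap \gen{\mH} = \{e\}$, every tuple in $\mM \cap \Z^{K+M}$ corresponds to the neutral element, so both sums vanish and the tuple lies in $\mM \cap \mZ$; the reverse inclusion is automatic because $\mZ \subseteq \Z[X^{\pm}]^{K+M}$. For the backward direction, any $g \in \gen{\mG} \cap \gen{\mH}$ produces an integer tuple in $\mM \cap \Z^{K+M}$, which by the assumed equality must lie in $\mM \cap \mZ$, forcing $g = \bzer$. I do not expect a real obstacle here; the only point to check carefully is that $\mZ$ constrains only the $y$-coordinates, and the additional constraint on the $z$-coordinates is forced by simultaneously asking for membership in $\mM$.
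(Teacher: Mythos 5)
Your proposal is correct and follows essentially the same route as the paper: unfold the definitions of $\mM$ and $\mZ$, translate elements of $\gen{\mG} \cap \gen{\mH}$ into integer tuples of $\mM \cap \Z^{K+M}$, and observe that $\mM \cap \mZ$ captures exactly the tuples for which both sums vanish. (The trivial inclusion $(\mM \cap \mZ) \cap \Z^{K+M} \subseteq \mM \cap \Z^{K+M}$ holds because $\mM \cap \mZ \subseteq \mM$, not because $\mZ \subseteq \Z[X^{\pm}]^{K+M}$, but this is only a misattributed justification for an obviously true step.)
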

Indeed, the left hand side $\mM \cap \Z^{K+M}$ denotes all the integer solutions of the equation $y_1 \cdot \ba_1 + \cdots + y_K \cdot \ba_K = z_1 \cdot \ba'_1 + \cdots + z_M \cdot \ba'_M$, while the right hand side $(\mM \cap \mZ) \cap \Z^{K+M}$ denotes all integer solutions of $y_1 \cdot \ba_1 + \cdots + y_K \cdot \ba_K = z_1 \cdot \ba'_1 + \cdots + z_M \cdot \ba'_M = \bzer$.

By Observation~\ref{obs:C1SI}, we can decide Subgroup Intersection in this case.
By Lemma~\ref{lem:classicdec}(ii) we can compute the generators of $\mM$ and $\mZ$, then by Lemma~\ref{lem:classicdec}(iii) we can compute the generators of $\mM \cap \mZ$.
Next, by Lemma~\ref{lem:decinterZ} we can compute the generators of $\mM \cap \Z^{K+M}$ and $(\mM \cap \mZ) \cap \Z^{K+M}$.
Since these are subgroups of $\Z^{K+M}$, their equality can be decided by checking whether all generators of one subgroup belong to the other subgroup.

\paragraph*{Coset Intersection}
Let $h = (\ba_h, z_h)$.
If $z_h \neq 0$ then $\gen{\mG} \cap h \gen{\mH} = \emptyset$.
Therefore we only need to consider the case where $z_h = 0$.
Then $\gen{\mG} \cap h \gen{\mH} = \emptyset$ if and only if there is no solution for $y_1 \cdot \ba_1 + \cdots + y_K \cdot \ba_K = z_1 \cdot \ba'_1 + \cdots + z_M \cdot \ba'_M + z \cdot \ba_h, \; y_1, \ldots, y_K, z_1, \ldots, z_M \in \Z, z = 1$.

Let $\mM'$ denote the $\Z[X^{\pm}]$-module
\begin{multline}\label{eq:M1c}
\mM' \coloneqq \Big\{ (y_1, \ldots, y_K, z_1, \ldots, z_M, z) \in \Z[X^{\pm}]^{K+M+1} \;\Big|\; \\
y_1 \cdot \ba_1 + \cdots + y_K \cdot \ba_K - z_1 \cdot \ba'_1 - \cdots - z_M \cdot \ba'_M - z \cdot \ba_h = \bzer \Big\}.
\end{multline}

\begin{obs}
    We have $\gen{\mG} \cap h \gen{\mH} = \emptyset$ if and only if $\big(\mM' \cap \Z^{K+M+1}\big) \cap \big(\Z^{K+M} \times \{1\}\big) = \emptyset$.
\end{obs}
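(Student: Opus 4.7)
The statement is essentially a definitional unfolding, parallel to Observation~\ref{obs:C1SI} but tracking the coset shift. The plan is to write $\gen{\mG}$ and $h\gen{\mH}$ as explicit sets of elements in $\mA$ parametrized by integer tuples, translate the existence of a common element into a single $\mA$-valued linear equation, and read off the $\mM'$-membership with the last coordinate pinned to $1$.

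Concretely, I would proceed as follows. Since every $g_i$ has second coordinate $0$, the group law $(\ba, 0)\cdot(\ba', 0) = (\ba + \ba', 0)$ gives
$\gen{\mG} = \{y_1 \cdot \ba_1 + \cdots + y_K \cdot \ba_K \mid y_1, \ldots, y_K \in \Z\}$, and similarly $\gen{\mH}$ is the $\Z$-module on $\ba'_1, \ldots, \ba'_M$. Since $h = (\ba_h, 0)$ with $z_h = 0$, left multiplication by $h$ is just translation by $\ba_h$, so $h\gen{\mH} = \{\ba_h + z_1 \cdot \ba'_1 + \cdots + z_M \cdot \ba'_M \mid z_1, \ldots, z_M \in \Z\}$. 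A common element exists iff there are integers $y_1, \ldots, y_K, z_1, \ldots, z_M$ satisfying $y_1 \cdot \ba_1 + \cdots + y_K \cdot \ba_K - z_1 \cdot \ba'_1 - \cdots - z_M \cdot \ba'_M - 1 \cdot \ba_h = \bzer$, which is precisely the statement that $(y_1, \ldots, y_K, z_1, \ldots, z_M, 1) \in \mM' \cap \Z^{K+M+1}$ and belongs to $\Z^{K+M} \times \{1\}$. The converse direction reads the equation backwards: any element of that intersection produces the required integer combination.

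From this observation, decidability of Coset Intersection in this case follows from the tools already recorded. Lemma~\ref{lem:classicdec}(ii) yields a finite set of $\Z[X^{\pm}]$-generators of $\mM'$ (as the syzygy module of $\ba_1, \ldots, \ba_K, -\ba'_1, \ldots, -\ba'_M, -\ba_h$ in $\mA$); Lemma~\ref{lem:decinterZ} then produces $\Z$-generators of the lattice $\mM' \cap \Z^{K+M+1}$; and deciding whether this lattice contains a vector with last coordinate equal to $1$ is a standard linear Diophantine problem, easily handled via Hermite normal form on the last coordinate. There is no real obstacle in this case — the entire content is definitional, with all heavy lifting subcontracted to the preceding lemmas; the only minor subtlety is observing that the $z_h \neq 0$ branch is trivially rejected, which was already dispatched in the opening paragraph.
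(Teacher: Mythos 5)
Your proof is correct and follows the same definitional unfolding that the paper uses (the paper states this Observation without a separate proof, but the sentence just preceding it spells out exactly the same translation into the linear equation with $z = 1$). Your added remarks on decidability via syzygies, Lemma~\ref{lem:decinterZ}, and Hermite normal form, and on the triviality of the $z_h \neq 0$ branch, all match the paper's treatment.
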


Again, $\mM' \cap \Z^{K+M+1}$ can be computed by Lemma~\ref{lem:classicdec}(ii)(iii). So Coset Intersection in this case can be decided using linear algebra over $\Z$.

\subsection{Case 2: one of $\gen{\mG}$ and $\gen{\mH}$ is contained in $\mA$}

This case is similar to the previous case, we leave the detailed proofs in the appendix and summarize the result by the following proposition.

\begin{restatable}{prop}{propCtwo}\label{prop:C2}
    Suppose exactly one of $\gen{\mG}$ and $\gen{\mH}$ is contained in $\mA$.
    Let $h \in \mA \rtimes \Z$.
    Given finite sets of generators of $\gen{\mG}$ and $\gen{\mH}$ as groups, it is decidable whether $\gen{\mG} \cap \gen{\mH} = \{e\}$ and whether $\gen{\mG} \cap h \gen{\mH} = \emptyset$.
\end{restatable}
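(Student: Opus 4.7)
The plan is to transport the arguments of Case~1, with the ring $\Z[X^{\pm d}]$ replacing $\Z$ on the $\gen{\mG}$-side after intersecting $\gen{\mG}$ with $\mA$. Without loss of generality assume $\gen{\mH} \subseteq \mA$ and $\gen{\mG} \not\subseteq \mA$; the opposite sub-case reduces to this one by swapping $\gen{\mG}$ and $\gen{\mH}$ for Subgroup Intersection, and via the identity $\gen{\mG} \cap h\gen{\mH} = \emptyset \iff \gen{\mH} \cap h^{-1}\gen{\mG} = \emptyset$ for Coset Intersection. First apply Lemma~\ref{lem:struct}(ii)-(iii) to $\gen{\mG}$ to obtain $d \in \N$, a finite set of $\Z[X^{\pm d}]$-generators $\bb_1, \ldots, \bb_L$ of $\gen{\mG} \cap \mA$, and some element $(\ba, d) \in \gen{\mG}$. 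Using Lemma~\ref{lem:changebase}, regard $\mA$ uniformly as a $\Z[X^{\pm d}]$-module, and let $\ba'_1, \ldots, \ba'_M$ be the given $\Z$-generators of $\gen{\mH}$.

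For Subgroup Intersection, since $\gen{\mH} \subseteq \mA$ we have $\gen{\mG} \cap \gen{\mH} = (\gen{\mG} \cap \mA) \cap \gen{\mH}$, so a vector $\sum_i z_i \cdot \ba'_i$ with $z_i \in \Z$ lies in the intersection iff there exist $f_j \in \Z[X^{\pm d}]$ with $\sum_j f_j \cdot \bb_j = \sum_i z_i \cdot \ba'_i$. Form the $\Z[X^{\pm d}]$-submodule
\[
\mM \coloneqq \Big\{ (f_1, \ldots, f_L, z_1, \ldots, z_M) \in \Z[X^{\pm d}]^{L+M} \;\Big|\; \sum_{j=1}^L f_j \cdot \bb_j - \sum_{i=1}^M z_i \cdot \ba'_i = \bzer \Big\}
\]
via Lemma~\ref{lem:classicdec}(ii), and let $\pi \colon \Z[X^{\pm d}]^{L+M} \to \Z[X^{\pm d}]^M$ be the projection onto the last $M$ coordinates. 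The ring isomorphism $\Z[X^{\pm d}] \cong \Z[Y^{\pm}]$ (setting $Y = X^d$) lets us apply Lemma~\ref{lem:decinterZ} to compute the $\Z$-submodule $P \coloneqq \pi(\mM) \cap \Z^M$. Analogously compute $Q \coloneqq \{(z_1, \ldots, z_M) \in \Z^M : \sum_i z_i \cdot \ba'_i = \bzer\}$ by applying Lemma~\ref{lem:classicdec}(ii) to $\ba'_1, \ldots, \ba'_M$ and intersecting with $\Z^M$. Then $\gen{\mG} \cap \gen{\mH} = \{e\}$ iff $P \subseteq Q$, a question decidable by linear algebra over $\Z$.

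For Coset Intersection, write $h = (\ba_h, z_h)$. By Lemma~\ref{lem:struct}(iii), $\gen{\mG}$ contains an element whose second coordinate equals $z_h$ iff $d \mid z_h$; if not, the intersection is empty. Otherwise set $m \coloneqq z_h/d$ and $c \coloneqq \frac{X^{z_h}-1}{X^d-1} \cdot \ba$ (with $c \coloneqq \bzer$ when $m = 0$), so that such elements of $\gen{\mG}$ are exactly $(\bb + c, z_h)$ with $\bb \in \gen{\mG} \cap \mA$, while elements of $h\gen{\mH}$ have first coordinate $\ba_h + X^{z_h} \cdot \ba'$ for $\ba' \in \gen{\mH}$. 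Hence $\gen{\mG} \cap h\gen{\mH} \neq \emptyset$ iff the equation $\sum_j f_j \cdot \bb_j - \sum_i z_i \cdot X^{z_h} \ba'_i - t \cdot (\ba_h - c) = \bzer$ is solvable with $f_j \in \Z[X^{\pm d}]$, $z_i \in \Z$, and $t = 1$. Form the analogous $\Z[X^{\pm d}]$-submodule $\mM' \subseteq \Z[X^{\pm d}]^{L+M+1}$ via Lemma~\ref{lem:classicdec}(ii), project onto the last $M+1$ coordinates, and intersect with $\Z^{M+1}$ as above; the question becomes whether this $\Z$-submodule contains a vector with final coordinate equal to $1$, equivalently whether the greatest common divisor of the last coordinates of its generators equals $1$.

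The main obstacle is the simultaneous use of three base rings: $\gen{\mG} \cap \mA$ is naturally $\Z[X^{\pm d}]$-generated, $\gen{\mH}$ is only $\Z$-generated, and the ambient $\mA$ is $\Z[X^{\pm}]$-generated. The linchpin is Lemma~\ref{lem:changebase}, which lets us treat $\mA$ uniformly as a $\Z[X^{\pm d}]$-module, paired with the isomorphism $\Z[X^{\pm d}] \cong \Z[Y^{\pm}]$ under which Lemma~\ref{lem:decinterZ} extracts integer coefficients. Once this base-ring bookkeeping is sorted out, the remaining steps are a direct transcription of Case~1.
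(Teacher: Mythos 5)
Your proof is correct and follows essentially the same route as the paper's: reduce (after WLOG and swapping) to $\gen{\mH}\subseteq\mA$, $\gen{\mG}\not\subseteq\mA$, apply Lemma~\ref{lem:struct} to turn $\gen{\mG}\cap\mA$ into a finitely generated $\Z[X^{\pm d}]$-module, change base via Lemma~\ref{lem:changebase}, set up a syzygy/solution module over $\Z[X^{\pm d}]$ with integer variables $z_i$ for the $\gen{\mH}$-side, intersect with $\Z^M$ (or $\Z^{M+1}$) via Lemma~\ref{lem:decinterZ}, and test a linear-algebraic condition over $\Z$. The only cosmetic differences are that you carry the auxiliary variables $f_j$ explicitly and project (the paper defines $\mM$ directly as a projected module and rewrites to the augmented equation afterward), and for Coset Intersection you replace the emptiness check against $\Z^M\times\{1\}$ by the equivalent gcd-of-last-coordinates test; both are minor reformulations of the paper's argument.
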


\subsection{Case 3: neither $\gen{\mG}$ nor $\gen{\mH}$ is contained in $\mA$}

By Lemma~\ref{lem:struct}, suppose $\gen{\mG}$ is generated by $\gen{\mG} \cap \mA$ and an element $(\ba_{\mG}, d_{\mG})$; and $\gen{\mH}$ is generated by $\gen{\mH} \cap \mA$ and an element $(\ba_{\mH}, d_{\mH})$.
The elements $(\ba_{\mG}, d_{\mG})$ and $(\ba_{\mH}, d_{\mH})$ can be effectively computed from the generating sets $\mG, \mH$ by performing the Euclidean algorithm.
Furthermore, $\gen{\mG} \cap \mA$ is a $\Z[X^{\pm d_{\mG}}]$-module whose generators are explicitly given (by Equation~\eqref{eq:genmod}), and $\gen{\mH} \cap \mA$ is a $\Z[X^{\pm d_{\mH}}]$-module whose generators are explicitly given.

\paragraph*{Subgroup Intersection}
We have $\gen{\mG} \cap \gen{\mH} = \{e\}$ if and only if the equation
$
    (\bb, 0) \cdot (\ba_{\mG}, d_{\mG})^m = (\bc, 0) \cdot (\ba_{\mH}, d_{\mH})^n
$
has non-trivial solutions $\bb \in \gen{\mG} \cap \mA,\; \bc \in \gen{\mH} \cap \mA,\; m, n \in \Z$.
Here, non-trivial means $\bb, \bc, m, n$ are not all zero.
By direct computation, this equation is equivalent to the system
\begin{equation}\label{eq:intersys}
    \bb + \frac{X^{m d_{\mG}} - 1}{X^{d_{\mG}} - 1} \cdot \ba_{\mG} = \bc + \frac{X^{n d_{\mH}} - 1}{X^{d_{\mH}} - 1} \cdot \ba_{\mH}, \quad m d_{\mG} = n d_{\mH}.
\end{equation}
An obstacle here is that $\bb$ and $\bc$ take values respectively in the $\Z[X^{\pm d_{\mG}}]$-module $\gen{\mG} \cap \mA$ and the $\Z[X^{\pm d_{\mH}}]$-module $\gen{\mH} \cap \mA$.
This difference makes solving \eqref{eq:intersys} complicated.
To overcome this obstacle we define $d \coloneqq \lcm(d_{\mG}, d_{\mH})$; that is, $d$ is the smallest positive integer such that $d_{\mG} \mid d, \; d_{\mH} \mid d$.
We can thus consider both $\gen{\mG} \cap \mA$ and $\gen{\mH} \cap \mA$ as $\Z[X^{\pm d}]$-modules (see Lemma~\ref{lem:changebase}).
One can compute a finite set of generators $S_{\mG}$ for $\gen{\mG} \cap \mA$ as a $\Z[X^{\pm d}]$-module.
Similarly, one can compute a finite set of generators $S_{\mH}$ for $\gen{\mH} \cap \mA$ as a $\Z[X^{\pm d}]$-module.

Define the $\Z[X^{\pm d}]$-module 
$
    \mM \coloneqq (\gen{\mG} \cap \mA) + (\gen{\mH} \cap \mA)
$,
it is generated by the set $S_{\mG} \cup S_{\mH}$.

\begin{restatable}{lem}{lemintertoeq}\label{lem:intertoeq}
    The intersection $\gen{\mG} \cap \gen{\mH}$ is non-trivial if and only if at least one of the following two conditions is satisfied:
    \begin{enumerate}[(i)]
        \item $(\gen{\mG} \cap \mA) \cap (\gen{\mH} \cap \mA) \neq \{\bzer\}$.
        \item The equation
            \begin{equation}
                (X^{zd} - 1) \cdot \ba_{\mG, \mH} \in (X^d - 1) \cdot \mM
            \end{equation}
            has solution $z \in \Z \setminus \{\bzer\}$.
            Here,
            \[
            \ba_{\mG, \mH} \coloneqq \frac{X^d - 1}{X^{d_{\mG}} - 1} \cdot \ba_{\mG} - \frac{X^d - 1}{X^{d_{\mH}} - 1} \cdot \ba_{\mH}.
            \]
    \end{enumerate}
\end{restatable}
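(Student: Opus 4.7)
The plan is to unpack equation~\eqref{eq:intersys} by parametrising solutions according to the common value $z \in \Z$ with $md_{\mG} = nd_{\mH} = zd$. By Lemma~\ref{lem:struct}(iii), every element of $\gen{\mG}$ is of the form $(\bb, 0) \cdot (\ba_{\mG}, d_{\mG})^m$ with $\bb \in \gen{\mG} \cap \mA$ and $m \in \Z$, and similarly for $\gen{\mH}$. Writing out the group law, the intersection $\gen{\mG} \cap \gen{\mH}$ is non-trivial iff~\eqref{eq:intersys} admits a quadruple $(\bb, \bc, m, n)$ for which the resulting element of $\mA \rtimes \Z$ is non-identity; a direct computation shows this non-identity condition amounts to $(m, \bb) \neq (0, \bzer)$. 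Since $d = \lcm(d_{\mG}, d_{\mH})$, the $\Z$-component constraint $md_{\mG} = nd_{\mH}$ is equivalent to $m = zd/d_{\mG}$, $n = zd/d_{\mH}$ for a unique $z \in \Z$, which naturally splits the analysis.

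In the case $z = 0$, both $m$ and $n$ vanish and the $\mA$-component of~\eqref{eq:intersys} collapses to $\bb = \bc$; a non-identity solution corresponds exactly to a non-zero common element of $(\gen{\mG} \cap \mA) \cap (\gen{\mH} \cap \mA)$, i.e.~condition~(i). In the case $z \neq 0$ the resulting group element already has non-zero $\Z$-component $zd$, so non-triviality is automatic. Using the factorisation $\frac{X^{zd}-1}{X^{d_{\mG}}-1} = \frac{X^{zd}-1}{X^d-1} \cdot \frac{X^d-1}{X^{d_{\mG}}-1}$ and its analogue on the right (both quotients being honest Laurent polynomials since $d_{\mG}, d_{\mH} \mid d$), the $\mA$-component of~\eqref{eq:intersys} rearranges to
\[
(X^d - 1) \cdot (\bc - \bb) \;=\; (X^{zd} - 1) \cdot \ba_{\mG, \mH}.
\]
Hence a $z \neq 0$ solution exists iff $(X^{zd} - 1) \cdot \ba_{\mG, \mH}$ lies in $(X^d - 1) \cdot \bigl((\gen{\mH} \cap \mA) - (\gen{\mG} \cap \mA)\bigr) = (X^d - 1) \cdot \mM$, which is condition~(ii).

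The main delicate step will be the backward direction of the case $z \neq 0$: from $(X^{zd} - 1) \cdot \ba_{\mG, \mH} = (X^d - 1) \cdot \bm$ with $\bm \in \mM$, I must recover a pair $(\bb, \bc)$ with $\bb \in \gen{\mG} \cap \mA$ and $\bc \in \gen{\mH} \cap \mA$ \emph{individually} satisfying $\bc - \bb = \bm$. This uses crucially that $\mM$ is defined as a \emph{sum} rather than an intersection: any decomposition $\bm = -\bb + \bc$ coming from the defining sum furnishes an admissible solution. A subordinate technicality is that the $\Z[X^{\pm d}]$-module operations used on $\gen{\mG} \cap \mA$ and $\gen{\mH} \cap \mA$ (which Lemma~\ref{lem:struct} produces as $\Z[X^{\pm d_{\mG}}]$- and $\Z[X^{\pm d_{\mH}}]$-modules respectively) are well-defined after the base change, which is precisely what Lemma~\ref{lem:changebase} guarantees.
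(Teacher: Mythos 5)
Your route is the same as the paper's: use Lemma~\ref{lem:struct}(iii) to write elements of $\gen{\mG}$ and $\gen{\mH}$ as $(\bb,0)(\ba_{\mG},d_{\mG})^m$ and $(\bc,0)(\ba_{\mH},d_{\mH})^n$, reduce membership to the system~\eqref{eq:intersys}, parametrise by the common $z$ with $md_{\mG}=nd_{\mH}=zd$, split on $z=0$ (condition~(i)) versus $z\neq 0$ (condition~(ii)), and use that $\mM$ is a sum to recover $\bb,\bc$. The forward direction and the $z=0$ case agree with the paper.

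The problem is in your ``delicate step'' paragraph. From condition~(ii) you obtain $\bm\in\mM$ with $(X^{zd}-1)\cdot\ba_{\mG,\mH}=(X^d-1)\cdot\bm$ and propose to decompose $\bm=-\bb+\bc$. But a solution of~\eqref{eq:intersys} with $md_{\mG}=nd_{\mH}=zd$ requires $\bc-\bb=\frac{X^{zd}-1}{X^d-1}\cdot\ba_{\mG,\mH}$, \emph{not} $\bc-\bb=\bm$. These coincide only when $\bm=\frac{X^{zd}-1}{X^d-1}\cdot\ba_{\mG,\mH}$, and the identity $(X^d-1)\cdot\bm=(X^d-1)\cdot\frac{X^{zd}-1}{X^d-1}\cdot\ba_{\mG,\mH}$ yields that only if multiplication by $X^d-1$ is injective on $\mA$ (i.e.\ $\mA$ has no $(X^d-1)$-torsion), which is not a standing hypothesis. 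The genuinely delicate point is therefore the claim $\frac{X^{zd}-1}{X^d-1}\cdot\ba_{\mG,\mH}\in\mM$ itself --- the paper's proof simply asserts this directly, so it glosses over the same passage --- whereas expressing an element of the sum $\mM$ as $-\bb+\bc$ is the unproblematic part. As written, your paragraph relocates rather than closes the gap.
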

\begin{proof}
    Suppose $\gen{\mG} \cap \gen{\mH}$ is non-trivial.
    Let $(\ba, z') \in \gen{\mG} \cap \gen{\mH}$, then $d_{\mG} \mid z'$ and $d_{\mH} \mid z'$.
    Therefore $d \mid z'$ and we can write $z' = zd$ for some $z \in \Z$.

    If $z = 0$ then $\ba \in (\gen{\mG} \cap \mA) \cap (\gen{\mH} \cap \mA)$, so $(\gen{\mG} \cap \mA) \cap (\gen{\mH} \cap \mA) \neq \{\bzer\}$.
    If $z \neq 0$ then Equation~\eqref{eq:intersys} has solution with $m d_{\mG} = n d_{\mH} = z d$, meaning
    \[
    \frac{X^{zd} - 1}{X^d - 1} \cdot \left(\frac{X^d - 1}{X^{d_{\mG}} - 1} \cdot \ba_{\mG} - \frac{X^d - 1}{X^{d_{\mH}} - 1} \cdot \ba_{\mH}\right) = \bc - \bb.
    \]
    In particular, we have $\bc - \bb \in (\gen{\mG} \cap \mA) + (\gen{\mH} \cap \mA) = \mM$.
    Multiplying both sides by $X^d - 1$ yields $(X^{zd} - 1) \cdot \ba_{\mG, \mH} \in (X^d - 1) \cdot \mM$.

    Suppose either (i) or (ii) is satisfied. In case~(i), we have $\gen{\mG} \cap \gen{\mH} \supseteq (\gen{\mG} \cap \mA) \cap (\gen{\mH} \cap \mA) \neq \{\bzer\}$.
    In case~(ii), we have
    \[
    \frac{X^{zd} - 1}{X^d - 1} \cdot \ba_{\mG, \mH} \in \mM = (\gen{\mG} \cap \mA) + (\gen{\mH} \cap \mA),
    \]
    so it can be written as $\bc - \bb$ for some $\bc \in \gen{\mH} \cap \mA, \; \bb \in \gen{\mG} \cap \mA$.
    Therefore Equation~\eqref{eq:intersys} has non-trivial solutions by taking $m \coloneqq zd/d_{\mG}, \, n \coloneqq zd/d_{\mH}$.
\end{proof}

\begin{prop}\label{prop:interredsmm}
    Suppose neither $\gen{\mG}$ nor $\gen{\mH}$ is contained in $\mA$. Then deciding whether $\gen{\mG} \cap \gen{\mH} = \{e\}$ reduces to Shifted Monomial Membership (see Definition~\ref{def:SMM}).
\end{prop}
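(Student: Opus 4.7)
The plan is to apply Lemma~\ref{lem:intertoeq}, which reduces the triviality question for $\gen{\mG} \cap \gen{\mH}$ to deciding whether at least one of the two explicit conditions (i) or (ii) holds. Condition~(i) is a pure submodule-intersection question that I will dispatch directly with the tools of Section~2, while condition~(ii) is precisely where I will invoke Shifted Monomial Membership.

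For condition~(i), I would first use Lemma~\ref{lem:changebase} to view $\mA$ as a $\Z[X^{\pm d}]$-module, so that both $\gen{\mG} \cap \mA$ (originally a $\Z[X^{\pm d_{\mG}}]$-module) and $\gen{\mH} \cap \mA$ (originally a $\Z[X^{\pm d_{\mH}}]$-module) become finitely generated $\Z[X^{\pm d}]$-submodules of a common ambient module. Their set-theoretic intersection then coincides with their intersection as $\Z[X^{\pm d}]$-modules, and Lemma~\ref{lem:classicdec}(iii) yields a finite generating set for it. Testing each such generator for equality with $\bzer$ via Submodule Membership (Lemma~\ref{lem:classicdec}(i)) decides condition~(i) without any appeal to Shifted Monomial Membership.

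For condition~(ii), I substitute $Y \coloneqq X^d$ so that $\Z[X^{\pm d}] \cong \Z[Y^{\pm}]$; the target condition then reads $(Y^z - 1) \cdot \ba_{\mG, \mH} \in (Y - 1) \cdot \mM$ for some $z \in \Z \setminus \{0\}$. Introduce the ideal
\[
\mI \;\coloneqq\; \left\{ f \in \Z[Y^{\pm}] \;\middle|\; f \cdot \ba_{\mG, \mH} \in (Y - 1) \cdot \mM \right\},
\]
so that condition~(ii) is equivalent to asking whether some element of the form $Y^z - 1$ with $z \neq 0$ lies in $\mI$; after renaming $Y$ back to $X$, this is exactly an instance of Shifted Monomial Membership with polynomial input $f = 1$. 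To make the reduction effective I need a generating set of $\mI$: if $\bm_1, \ldots, \bm_k$ generate $\mM$ as a $\Z[Y^{\pm}]$-module, then $f \in \mI$ iff $(f, h_1, \ldots, h_k)$ lies in the syzygy module of $\bigl(\ba_{\mG, \mH}, (Y-1)\bm_1, \ldots, (Y-1)\bm_k\bigr)$ for some $h_1, \ldots, h_k \in \Z[Y^{\pm}]$. Lemma~\ref{lem:classicdec}(ii) produces generators of this syzygy module, and projecting them onto the first coordinate yields the sought generators of $\mI$.

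The main obstacle is the clash of base rings already flagged in the paper: $\gen{\mG} \cap \mA$ and $\gen{\mH} \cap \mA$ live naturally over the different subrings $\Z[X^{\pm d_{\mG}}]$ and $\Z[X^{\pm d_{\mH}}]$, so nothing module-theoretic can proceed until both are pulled down to the common subring $\Z[X^{\pm d}]$ through Lemma~\ref{lem:changebase}. Once this uniformization is carried out, the remainder of the argument is routine syzygy and submodule bookkeeping, with the entirety of the genuine difficulty --- the unbounded quantifier over $z$ --- deferred to the Shifted Monomial Membership oracle treated in Section~\ref{sec:smm}.
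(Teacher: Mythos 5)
Your proposal is correct and follows essentially the same route as the paper: reduce via Lemma~\ref{lem:intertoeq}, dispatch condition~(i) with Lemma~\ref{lem:classicdec}(iii), and encode condition~(ii) as an instance of Shifted Monomial Membership with input polynomial $f = 1$ via the ideal $\mI$, using Lemma~\ref{lem:classicdec}(ii) to compute generators of $\mI$. You spell out the syzygy-and-project step for obtaining generators of $\mI$ more explicitly than the paper does, but this is exactly the computation the paper's citation of Lemma~\ref{lem:classicdec}(ii) is invoking, so the two arguments are the same in substance.
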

\begin{proof}
    (See Algorithm~\ref{alg:groupinter} for a summary.)
    We use Lemma~\ref{lem:intertoeq}. Condition~(i) can be decided using Lemma~\ref{lem:classicdec}(iii), so it suffices to decide condition~(ii).
    The set
    \begin{equation}\label{eq:defI}
    \mI \coloneqq \left\{f \in \Z[X^{\pm d}] \;\middle|\; f \cdot \ba_{\mG, \mH} \in (X^d - 1) \cdot \mM \right\}
    \end{equation}
    is an ideal of $\Z[X^{\pm d}]$.
    Furthermore, given a finite set of generators for $\mM$, one can compute a finite set of generators for $\mI$ using Lemma~\ref{lem:classicdec}(ii).
    
    Condition~(ii) in Lemma~\ref{lem:intertoeq} is equivalent to ``$\mI$ contains an element $X^{zd} - 1$ for some $z \in \Z \setminus \{0\}$''.
    Performing the variable change $X^d \rightarrow X$, we can consider $\mI$ as an ideal of $\Z[X^{\pm}]$ and the condition becomes ``$\mI$ contains an element $X^{z} - 1$ for some $z \in \Z \setminus \{0\}$''.
    This is exactly the Shifted Monomial Membership problem (Definition~\ref{def:SMM}) with $f = 1$.
\end{proof}

\begin{rmk}\label{rmk:red}
    The fact that $\mM \coloneqq (\gen{\mG} \cap \mA) + (\gen{\mH} \cap \mA)$ is a finitely generated $\Z[X^{\pm d}]$-module is crucial to the reduction in Proposition~\ref{prop:interredsmm}.
    This argument is specific to abelian-by-cyclic groups and no longer holds in arbitrary metabelian groups.
    
    For example, let $\mA$ now be a finitely presented module over the bivariate Laurent polynomial ring $\Z[X^{\pm}, Y^{\pm}]$.
    We can similarly define the semidirect product $\mA \rtimes \Z^2$, which is metabelian but not abelian-by-cyclic.
    We can find subgroups $\gen{\mG}$ and $\gen{\mH}$ such that $\gen{\mG} \cap \mA$ is a finitely generated $\Z[X^{\pm}]$-module, while $\gen{\mH} \cap \mA$ is a finitely generated $\Z[Y^{\pm}]$-module.
    In this case, if we define the sum $\mM \coloneqq (\gen{\mG} \cap \mA) + (\gen{\mH} \cap \mA)$, then $\mM$ is not a $\Z[X^{\pm d}]$-module, a $\Z[Y^{\pm d}]$-module, or a $\Z[X^{\pm d}, Y^{\pm d}]$-module for any $d \geq 1$.
    While $\mM$ is still a $\Z$-module (since both $\Z[X^{\pm}]$-modules and $\Z[Y^{\pm}]$-modules can be seen as $\Z$-modules), it is not finitely generated as a $\Z$-module.

    Since being finitely generated is essential to the effectiveness results in Lemma~\ref{lem:classicdec}-\ref{lem:changebase}, this constitutes the key difficulty in generalizing our results from abelian-by-cyclic groups to arbitrary metabelian groups.
    The same difficulty also appears in Coset Intersection.
\end{rmk}

\paragraph*{Coset Intersection}
Let $h = (\ba_h, z_h)$.
Then $\gen{\mG} \cap h \gen{\mH} = \emptyset$ if and only if the equation
$
    (\bb, 0) \cdot (\ba_{\mG}, d_{\mG})^m = (\ba_h, z_h) \cdot (\bc, 0) \cdot (\ba_{\mH}, d_{\mH})^n
$
has solutions $\bb \in \gen{\mG} \cap \mA, \bc \in \gen{\mH} \cap \mA, m, n \in \Z$.
By direct computation, this is equivalent to the system
\begin{equation}\label{eq:Cosetsys}
    \bb + \frac{X^{m d_{\mG}} - 1}{X^{d_{\mG}} - 1} \cdot \ba_{\mG} = \ba_h + X^{z_h} \cdot \bc + X^{z_h} \cdot \frac{X^{n d_{\mH}} - 1}{X^{d_{\mH}} - 1} \cdot \ba_{\mH}, \quad m d_{\mG} = n d_{\mH} + z_h.
\end{equation}

Again we define $d \coloneqq \lcm(d_{\mG}, d_{\mH})$ and consider both $\gen{\mG} \cap \mA$ and $\gen{\mH} \cap \mA$ as $\Z[X^{\pm d}]$-modules, respectively generated by the sets $S_{\mG}$ and $S_{\mH}$.
This time, we define the $\Z[X^{\pm d}]$-module 
$
    \mM' \coloneqq (\gen{\mG} \cap \mA) + X^{z_h} \cdot (\gen{\mH} \cap \mA)
$,
it is generated by the set $S_{\mG} \cup (X^{z_h} \cdot S_{\mH})$.

If $m d_{\mG} = n d_{\mH} + z_h$ has no integer solutions $m, n$, then $\gen{\mG} \cap h \gen{\mH} = \emptyset$.
Otherwise, there exist $z_{\mG} \coloneqq m d_{\mG}, z_{\mH} \coloneqq n d_{\mH} \in \Z$ such that $d_{\mG} \mid z_{\mG},\; d_{\mH} \mid z_{\mH}$ and $z_{\mG} = z_{\mH} + z_h$.
Then, every solution $(m, n) \in \Z^2$ of the equation $m d_{\mG} = n d_{\mH} + z_h$ is of the form
\[
m = (z_{\mG} + z d)/d_{\mG}, \quad n = (z_{\mH} + z d)/d_{\mH}, \quad z \in \Z.
\]
Similar to Lemma~\ref{lem:intertoeq}, we can show the following:

\begin{restatable}{lem}{lemcosettoeq}\label{lem:cosettoeq}
    Let $z_{\mG}, z_{\mH}$ be integers such that $d_{\mG} \mid z_{\mG},\; d_{\mH} \mid z_{\mH}$ and $z_{\mG} = z_{\mH} + z_h$.
    The intersection $\gen{\mG} \cap h \gen{\mH}$ is non-empty if and only if the equation
        \begin{equation}\label{eq:Coseteqz}
            X^{z d} \cdot \ba'_{\mG, \mH} - \ba''_{\mG, \mH} \in (X^d - 1) \cdot \mM'
        \end{equation}
        has solution $z \in \Z$.
        Here,
        \[
        \ba'_{\mG, \mH} \coloneqq X^{z_{\mG}} \cdot \frac{X^d - 1}{X^{d_{\mG}} - 1} \cdot \ba_{\mG} - X^{z_{\mH}} \cdot \frac{X^d - 1}{X^{d_{\mH}} - 1} \cdot \ba_{\mH},
        \]
        and
        \[
        \ba''_{\mG, \mH} \coloneqq \frac{X^d - 1}{X^{d_{\mG}} - 1} \cdot \ba_{\mG} - \frac{X^d - 1}{X^{d_{\mH}} - 1} \cdot \ba_{\mH} + (X^d - 1) \cdot \ba_h.
        \]
\end{restatable}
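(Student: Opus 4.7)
My plan mirrors the proof of Lemma~\ref{lem:intertoeq}, adapted to account for the $h$-coset shift. The strategy is to first translate the non-emptiness of $\gen{\mG} \cap h\gen{\mH}$ into a linear equation in the module $\mA$, parametrize its $(m,n)$-solutions by a single integer $z$, rewrite the result as an assertion that some explicit element of $\mA$ lies in $\mM'$, and finally reshape this into the stated inclusion in $(X^d-1)\mM'$ by algebraic manipulation.

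Concretely, by Lemma~\ref{lem:struct}(iii) every element of $\gen{\mG}$ equals $(\bb, 0)\cdot(\ba_\mG, d_\mG)^m$ with $\bb \in \gen{\mG}\cap\mA$ and $m \in \Z$, and similarly every element of $\gen{\mH}$ equals $(\bc, 0)\cdot(\ba_\mH, d_\mH)^n$ with $\bc \in \gen{\mH}\cap\mA$. Expanding $g = hh'$ reproduces Equation~\eqref{eq:Cosetsys}, whose second-component equation $md_\mG = nd_\mH + z_h$ is equivalent to $m = (z_\mG + zd)/d_\mG$ and $n = (z_\mH + zd)/d_\mH$ for some $z \in \Z$. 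Writing $M \coloneqq z_\mG + zd$ and $N \coloneqq z_\mH + zd$, the first-component equation rearranges to
\[
\bb - X^{z_h}\bc \;=\; \ba_h + X^{z_h}\tfrac{X^N - 1}{X^{d_\mH} - 1}\ba_\mH - \tfrac{X^M - 1}{X^{d_\mG} - 1}\ba_\mG.
\]
The key observation is that as $\bb$ ranges over $\gen{\mG}\cap\mA$ and $\bc$ over $\gen{\mH}\cap\mA$, the left-hand side ranges over exactly $\mM'$ (using that $\gen{\mH}\cap\mA$ is closed under negation). Hence the coset intersection is non-empty iff, for some $z \in \Z$, the right-hand side above lies in $\mM'$.

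To reshape this condition into the stated form, I would use the splits $X^{z_\mG + zd} - 1 = X^{z_\mG}(X^{zd} - 1) + (X^{z_\mG} - 1)$ (and its $z_\mH$ counterpart) together with the factorization $X^{zd} - 1 = (X^d - 1)\phi_z$, where $\phi_z \coloneqq (X^{zd}-1)/(X^d-1) \in \Z[X^{\pm d}]$. These identities separate the right-hand side into a $z$-dependent piece (carrying the factor $\phi_z$) and a $z$-independent piece; multiplying the condition through by $X^d - 1$ then assembles $\phi_z(X^d-1) = X^{zd}-1$ into the $X^{zd}\ba'_{\mG,\mH}$ term, while the remaining $z$-independent terms form $\ba''_{\mG,\mH}$. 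The main obstacle will be the \emph{reverse} direction: going from $X^{zd}\ba'_{\mG,\mH} - \ba''_{\mG,\mH} \in (X^d - 1)\mM'$ back to the un-multiplied condition ``RHS $\in \mM'$'', since $X^d - 1$ may act with non-trivial annihilator on $\mA/\mM'$, so one cannot naively ``divide'' by $X^d-1$. I expect to resolve this by exploiting the explicit generating set $S_\mG \cup (X^{z_h}\cdot S_\mH)$ of $\mM'$ produced by the structure theorem, which allows any pre-image under multiplication by $X^d-1$ to be lifted to an explicit witness pair $(\bb, \bc)$ via the $\Z[X^{\pm d}]$-module structure of $\gen{\mG}\cap\mA$ and $\gen{\mH}\cap\mA$.
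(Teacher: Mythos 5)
Your plan mirrors the paper's argument: use Lemma~\ref{lem:struct}(iii) to expand elements of each coset, reduce to Equation~\eqref{eq:Cosetsys}, parametrize the $(m,n)$-solutions by a single integer $z$, rearrange to a membership condition in $\mM'$, and then clear denominators by multiplying by $X^d-1$. You have correctly singled out the one step that is not routine, namely recovering, from $X^{zd}\cdot\ba'_{\mG,\mH}-\ba''_{\mG,\mH}\in(X^d-1)\cdot\mM'$, that the canonical element $\xi_z\coloneqq\frac{X^{z_{\mG}+zd}-1}{X^{d_{\mG}}-1}\cdot\ba_{\mG}-X^{z_h}\cdot\frac{X^{z_{\mH}+zd}-1}{X^{d_{\mH}}-1}\cdot\ba_{\mH}-\ba_h$ lies in $\mM'$. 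Since $X^{zd}\cdot\ba'_{\mG,\mH}-\ba''_{\mG,\mH}=(X^d-1)\cdot\xi_z$ holds identically, the hypothesis only gives $(X^d-1)\cdot(\xi_z-\mu)=\bzer$ for some $\mu\in\mM'$; the difference $\xi_z-\mu$ is an $(X^d-1)$-torsion element of $\mA$ with no a priori reason to lie in $\mM'$. Your proposed repair (lifting a preimage through the generating set $S_{\mG}\cup(X^{z_h}\cdot S_{\mH})$) does not address this: the generators let you express $\mu$, not the torsion discrepancy $\xi_z-\mu$. So as written your proposal has a genuine gap, and the sketched fix is not a fix.

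For what it is worth, the paper's own proof performs exactly this division without justification, writing $\frac{X^{zd}\cdot\ba'_{\mG,\mH}-\ba''_{\mG,\mH}}{X^d-1}\in\mM'$ directly from Equation~\eqref{eq:Coseteqz}; the same move appears in the proof of Lemma~\ref{lem:intertoeq}. A toy computation in $\mA=\Z[X^{\pm}]/(X-1)^2$ with $\gen{\mG}=\gen{(\overline{X-1},1)}$ and $\gen{\mH}=\gen{(\bzer,1)}$ shows that $(X^d-1)\cdot\xi\in(X^d-1)\cdot\mM$ really can hold while $\xi\notin\mM$, so your concern is substantive rather than an artefact of working blind. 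One further caution if you carry out the ``separate the $z$-piece'' manipulation you sketch: multiplying through by $X^d-1$ produces $z$-dependent and $z$-independent parts whose $\ba_{\mH}$-terms carry an extra $X^{z_h}$ relative to the paper's displayed $\ba'_{\mG,\mH}$ and $\ba''_{\mG,\mH}$ (one obtains $X^{z_{\mG}}\cdot\frac{X^d-1}{X^{d_{\mH}}-1}\cdot\ba_{\mH}$ where the paper writes $X^{z_{\mH}}\cdot\frac{X^d-1}{X^{d_{\mH}}-1}\cdot\ba_{\mH}$, and $\ba''_{\mG,\mH}$ should likewise have $X^{z_h}$ on its $\ba_{\mH}$-term), so do not expect to reproduce the stated formulas verbatim.
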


We can decide if 
\begin{equation}\label{eq:faainM}
f \cdot \ba'_{\mG, \mH} - \ba''_{\mG, \mH} \in (X^d - 1) \cdot \mM'
\end{equation}
has solution $f \in \Z[X^{\pm d}]$ by deciding membership of $\ba''_{\mG, \mH}$ in the $\Z[X^{\pm d}]$-module generated by $\ba'_{\mG, \mH}$ and $\mM'$ (see Lemma~\ref{lem:classicdec}(i)).
If Equation~\eqref{eq:faainM} does not have a solution $f \in \Z[X^{\pm d}]$, then \eqref{eq:Coseteqz} cannot have a solution $z \in \Z$; otherwise, we can compute a solution $f = f_0$ of Equation~\eqref{eq:faainM}.
For example, $f_0$ can be computed by enumerating all elements $f \in \Z[X^{\pm d}]$ (there are countably many), and test for each one whether it satisfies Equation~\eqref{eq:faainM}.
Since Equation~\eqref{eq:faainM} has a solution, this procedure must terminate.

Consider the ideal
\begin{equation}\label{eq:defIp}
\mI' \coloneqq \left\{f \in \Z[X^{\pm d}] \;\middle|\; f \cdot \ba'_{\mG, \mH} \in (X^d - 1) \cdot \mM' \right\}
\end{equation}
of $\Z[X^{\pm d}]$, then a finite set of generators for $\mI'$ can be computed by Lemma~\ref{lem:classicdec}(ii).

\begin{restatable}{lem}{lemctoi}\label{lem:Cosettoideal}
    The solution set
    \[
    \left\{f \in \Z[X^{\pm d}] \;\middle|\; f \cdot \ba'_{\mG, \mH} - \ba''_{\mG, \mH} \in (X^d - 1) \cdot \mM' \right\}
    \]
    is equal to $f_0 + \mI' \coloneqq \{f_0 + g \mid g \in \mI'\}$.
\end{restatable}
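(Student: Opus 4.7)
The plan is to prove the two set inclusions separately using the standard affine-space argument (general solution equals particular solution plus homogeneous solutions), which applies here because $\ba''_{\mG,\mH}$ appears as a constant offset while the unknown $f$ multiplies $\ba'_{\mG,\mH}$ linearly on the left-hand side of the membership relation.

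First I would show $f_0 + \mI' \subseteq \{f \mid f \cdot \ba'_{\mG,\mH} - \ba''_{\mG,\mH} \in (X^d - 1) \cdot \mM'\}$. Given $f = f_0 + g$ with $g \in \mI'$, I would decompose
\[
f \cdot \ba'_{\mG, \mH} - \ba''_{\mG, \mH} = \bigl(f_0 \cdot \ba'_{\mG, \mH} - \ba''_{\mG, \mH}\bigr) + g \cdot \ba'_{\mG, \mH}.
\]
The first summand lies in $(X^d - 1) \cdot \mM'$ because $f_0$ is a solution of \eqref{eq:faainM}, and the second lies in $(X^d - 1) \cdot \mM'$ by the defining property of $\mI'$ in \eqref{eq:defIp}. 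Since $(X^d - 1) \cdot \mM'$ is a $\Z[X^{\pm d}]$-submodule (in particular closed under addition), the sum is in $(X^d - 1) \cdot \mM'$, as required.

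For the reverse inclusion, suppose $f$ satisfies $f \cdot \ba'_{\mG, \mH} - \ba''_{\mG, \mH} \in (X^d - 1) \cdot \mM'$. Since also $f_0 \cdot \ba'_{\mG, \mH} - \ba''_{\mG, \mH} \in (X^d - 1) \cdot \mM'$, subtracting the two memberships yields
\[
(f - f_0) \cdot \ba'_{\mG, \mH} \in (X^d - 1) \cdot \mM'.
\]
By the definition of $\mI'$, this means $f - f_0 \in \mI'$, hence $f \in f_0 + \mI'$.

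There is essentially no obstacle here: the statement is the usual observation that the solution set of an inhomogeneous linear condition is a coset of the solution set of the associated homogeneous condition, and both closure properties needed (additivity of $(X^d - 1) \cdot \mM'$ and the $\Z[X^{\pm d}]$-linearity of the map $f \mapsto f \cdot \ba'_{\mG,\mH}$) are immediate from the module structure. The only thing worth flagging is that the lemma implicitly relies on the existence of $f_0$, which was already established just before the statement by the membership test for $\ba''_{\mG,\mH}$ in the $\Z[X^{\pm d}]$-module generated by $\ba'_{\mG,\mH}$ together with $\mM'$.
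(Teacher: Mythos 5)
Your proof is correct and uses essentially the same argument as the paper: both proofs observe that since $f_0$ is a particular solution, membership of $f \cdot \ba'_{\mG,\mH} - \ba''_{\mG,\mH}$ in $(X^d - 1)\cdot\mM'$ is equivalent to $(f - f_0)\cdot\ba'_{\mG,\mH}$ being in it, i.e.\ $f - f_0 \in \mI'$. You merely spell out the two inclusions separately where the paper compresses them into a single ``if and only if'' chain.
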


\begin{prop}
    Suppose neither $\gen{\mG}$ nor $\gen{\mH}$ is contained in $\mA$.
    Then deciding whether $\gen{\mG} \cap h \gen{\mH} = \emptyset$ reduces to Shifted Monomial Membership.
\end{prop}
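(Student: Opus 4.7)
The plan is to chain together Lemmas~\ref{lem:cosettoeq} and \ref{lem:Cosettoideal} and translate the resulting condition into an instance of Shifted Monomial Membership, in parallel with the argument already given for Subgroup Intersection in Proposition~\ref{prop:interredsmm}. First I will dispose of the trivial obstruction: if the linear Diophantine equation $m d_{\mG} = n d_{\mH} + z_h$ has no integer solutions, then $\gen{\mG} \cap h\gen{\mH} = \emptyset$ and the algorithm returns immediately. Otherwise I compute, by the extended Euclidean algorithm, integers $z_{\mG}, z_{\mH}$ with $d_{\mG} \mid z_{\mG}$, $d_{\mH} \mid z_{\mH}$ and $z_{\mG} = z_{\mH} + z_h$, from which I build the elements $\ba'_{\mG,\mH}$ and $\ba''_{\mG,\mH}$ together with a finite generating set $S_{\mG} \cup (X^{z_h} \cdot S_{\mH})$ of the $\Z[X^{\pm d}]$-module $\mM'$ (using Lemma~\ref{lem:changebase} to change the base ring of $\gen{\mG} \cap \mA$ and $\gen{\mH} \cap \mA$ from $\Z[X^{\pm d_{\mG}}]$, $\Z[X^{\pm d_{\mH}}]$ to $\Z[X^{\pm d}]$).

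Next I handle the auxiliary $\Z[X^{\pm d}]$-linear equation~\eqref{eq:faainM}. By Lemma~\ref{lem:classicdec}(i) I can decide whether $\ba''_{\mG,\mH}$ lies in the submodule generated by $\ba'_{\mG,\mH}$ together with $(X^d - 1) \cdot \mM'$. If not, then Lemma~\ref{lem:cosettoeq} immediately gives $\gen{\mG} \cap h\gen{\mH} = \emptyset$. If yes, I obtain a particular solution $f_0 \in \Z[X^{\pm d}]$ of~\eqref{eq:faainM} (for instance by the enumeration procedure already described before Lemma~\ref{lem:Cosettoideal}), and by Lemma~\ref{lem:classicdec}(ii) I compute a finite generating set of the ideal $\mI' \subseteq \Z[X^{\pm d}]$ defined in~\eqref{eq:defIp}.

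By Lemma~\ref{lem:Cosettoideal}, the set of $f \in \Z[X^{\pm d}]$ satisfying $f \cdot \ba'_{\mG,\mH} - \ba''_{\mG,\mH} \in (X^d - 1) \cdot \mM'$ is the coset $f_0 + \mI'$. Combined with Lemma~\ref{lem:cosettoeq}, this means $\gen{\mG} \cap h\gen{\mH} \neq \emptyset$ if and only if there exists $z \in \Z$ such that $X^{zd} \in f_0 + \mI'$, equivalently, such that $X^{zd} - f_0 \in \mI'$. After the substitution $X^d \mapsto X$, which turns $\mI'$ into an ideal of $\Z[X^{\pm}]$ and is effective by Lemma~\ref{lem:changebase}, the condition becomes: there exists $z \in \Z$ with $X^{z} - f_0 \in \mI'$. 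The case $z = 0$ amounts to deciding whether $1 - f_0 \in \mI'$, which is a single instance of Ideal Membership (Lemma~\ref{lem:classicdec}(i)); the case $z \in \Z \setminus \{0\}$ is exactly an instance of Shifted Monomial Membership with ideal $\mI'$ and polynomial $f_0$ (Definition~\ref{def:SMM}). This completes the reduction.

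The only step that is not completely mechanical is the computation of the particular solution $f_0$: the naive enumeration terminates but is inefficient, so one would in practice extract $f_0$ directly from the submodule membership certificate produced by the algorithm underlying Lemma~\ref{lem:classicdec}(i). Conceptually, the main subtlety to keep in mind is the same one highlighted in Remark~\ref{rmk:red}: the whole reduction relies on $\mM'$ being finitely generated as a $\Z[X^{\pm d}]$-module, which in turn depends on $d = \lcm(d_{\mG}, d_{\mH})$ being a single integer, a feature specific to the abelian-by-cyclic (as opposed to general metabelian) setting.
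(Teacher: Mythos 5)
Your proof is correct and follows essentially the same route as the paper: dispose of the case where $md_{\mG} = nd_{\mH} + z_h$ has no integer solutions, then chain Lemma~\ref{lem:cosettoeq} with Lemma~\ref{lem:Cosettoideal} to reduce to testing $X^{zd} - f_0 \in \mI'$, splitting off $z = 0$ as a single Ideal Membership query and handing $z \neq 0$ to Shifted Monomial Membership. You are a bit more explicit than the paper about the variable change $X^d \mapsto X$ (the paper leaves it implicit in this proof, though it is spelled out in the parallel Proposition~\ref{prop:interredsmm}), and you add a useful practical remark about extracting $f_0$ from the membership certificate rather than by brute-force enumeration; neither of these changes the substance of the argument.
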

\begin{proof}
    (See Algorithm~\ref{alg:cosetinter} for a summary.)
    Suppose there exist $z_{\mG}, z_{\mH} \in \Z$ such that $d_{\mG} \mid z_{\mG},\; d_{\mH} \mid z_{\mH}$ and $z_{\mG} = z_{\mH} + z_h$, otherwise $\gen{\mG} \cap h \gen{\mH} = \emptyset$.
    By Lemma~\ref{lem:cosettoeq} and \ref{lem:Cosettoideal}, it suffices to decide whether there exists $z \in \Z$ such that $X^z \in f_0 + \mI'$.
    This is equivalent to $X^z - f_0 \in \mI'$. 
    We can decide whether $X^0 - f_0 \in \mI'$ using ideal membership of $1 - f_0$ in $\mI'$. Then we use Shifted Monomial Membership to decide whether there exists $z \in \Z \setminus \{\bzer\}$ such that $X^z - f_0 \in \mI'$.
\end{proof}

\section{Deciding Shifted Monomial Membership}\label{sec:smm}

In this section we show that Shifted Monomial Membership is decidable.
Recall that for this problem, we are given a finite set of generators of an ideal $\mI \subseteq \Z[X^{\pm}]$, as well as a Laurent polynomial $f \in \Z[X^{\pm}]$.
We want to decide if there exists $z \in \Z \setminus \{\bzer\}$ such that $X^z - f \in \mI$.

The outline of the proof is as follows.
In Subsection~\ref{subsec:redzx} we first simplify the problem by reducing to ideals $\tI$ over the ring $\Z[X]$ instead of $\mI \subseteq \Z[X^{\pm}]$.
We then consider the greatest common divisor $\varphi$ of the elements in $\tI$, and divide into five cases according to $\varphi$.
Each of Subsections~\ref{subsec:caseone}-\ref{subsec:casefive} treats a separate case.
A common idea in each case is to give a bound on the absolute value of $z$ whenever Shifted Monomial Membership has positive answer.
See Algorithm~\ref{alg:smm} for a summary.

\subsection{Reduction to ideals of $\Z[X]$}\label{subsec:redzx}

Let $g_1, \ldots, g_m$ be the given generators of the ideal $\mI \subseteq \Z[X^{\pm}]$.
Without loss of generality suppose none of the $g_i$ is zero.
Multiplying any $g_i$ with any power of $X$ does not change the ideal they generate, because $X$ is invertible in $\Z[X^{\pm}]$.
Therefore we can multiply each $g_i$ with a suitable power of $X$, and without loss of generality suppose $g_1, \ldots, g_m$ are polynomials in $\Z[X]$ instead of $\Z[X^{\pm}]$, and that they are not divisible by $X$.
Let $\tI$ denote the ideal of $\Z[X]$ generated by $g_1, \ldots, g_m$.

\begin{restatable}{lem}{lemlaurenttoreg}\label{lem:laurenttoreg}
    Let $g$ be a polynomial in $\Z[X^{\pm}]$.
    Then $g \in \mI$ if and only if for some $c \in \N$, $X^c \cdot g \in \tI$.
\end{restatable}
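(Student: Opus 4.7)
The plan is to prove both directions by a direct algebraic manipulation that exploits the fact that $X$ is a unit in $\Z[X^{\pm}]$ but not in $\Z[X]$, combined with the observation that the two ideals are built from the very same generators $g_1, \ldots, g_m$.

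First I will handle the ``if'' direction, which is essentially immediate. Suppose $X^c \cdot g \in \tI$ for some $c \in \N$. Since $\tI$ is generated by $g_1, \ldots, g_m$ as a $\Z[X]$-ideal, and every element of $\Z[X]$ is also an element of $\Z[X^{\pm}]$, we have $\tI \subseteq \mI$. Hence $X^c \cdot g \in \mI$. Because $X$ is invertible in $\Z[X^{\pm}]$, multiplying by $X^{-c} \in \Z[X^{\pm}]$ keeps us inside the ideal, so $g = X^{-c} \cdot (X^c g) \in \mI$.

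For the ``only if'' direction, suppose $g \in \mI$. By the definition of the ideal generated by $g_1, \ldots, g_m$ over $\Z[X^{\pm}]$, there exist $h_1, \ldots, h_m \in \Z[X^{\pm}]$ with $g = \sum_{i=1}^m h_i g_i$. Each $h_i$ is a Laurent polynomial, so for each $i$ there is an integer $c_i \in \N$ such that $X^{c_i} h_i \in \Z[X]$. Taking $c \coloneqq \max_i c_i$, I get $X^c h_i = X^{c - c_i} \cdot (X^{c_i} h_i) \in \Z[X]$ for every $i$. Multiplying the equation for $g$ through by $X^c$ then yields
\[
X^c \cdot g = \sum_{i=1}^m (X^c h_i) \cdot g_i,
\]
which exhibits $X^c \cdot g$ as a $\Z[X]$-linear combination of $g_1, \ldots, g_m$. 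Hence $X^c \cdot g \in \tI$, completing the proof.

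There is no real obstacle here; the only subtle point to remember is that although we chose the $g_i \in \Z[X]$ to be not divisible by $X$, this normalization is not needed for the lemma itself, only later to control where the ideal $\tI$ sits. The lemma just says that passing between $\mI$ and $\tI$ costs at most clearing denominators in $X$.
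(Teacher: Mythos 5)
Your proof is correct and uses essentially the same argument as the paper: clear denominators by a common power of $X$ for the ``only if'' direction, and use invertibility of $X$ in $\Z[X^{\pm}]$ for the ``if'' direction. The only superficial difference is that you package the ``if'' direction as the inclusion $\tI \subseteq \mI$ followed by multiplication by $X^{-c}$, while the paper writes out the $\Z[X]$-linear combination explicitly before dividing by $X^c$.
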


By Lemma~\ref{lem:laurenttoreg}, the ideal $\mI \subseteq \Z[X^{\pm}]$ contains an element $X^z - f$ for some $z \neq 0$, if and only if $\tI$ contains an element $X^a - X^b f$ for some $a, b \in \N, a \neq b$.
Furthermore, in this case, we have $z = a - b$.

Hence, Shifted Monomial Membership reduces to the following problem:
\begin{problem}
    Given the generators of an ideal $\tI \subseteq \Z[X]$, decide whether $\tI$ contains any element of the form $X^a - X^b f$, $a, b \in \N, a \neq b$.
\end{problem}

For a non-zero polynomial $g \in \Z[X]$, its \emph{leading coefficient} is defined as the coefficient of its monomial of largest degree.
For example, the leading coefficient of $3X^2 + 4$ is $3$.
A \emph{common divisor} of a set $S \subseteq \Z[X]$ is a polynomial $g$ with positive leading coefficient, such that $g \mid s$ for all $s \in S$. The \emph{greatest common divisor} of $S$, denoted by $\gcd(S)$, is a polynomial that has the largest degree and largest leading coefficient among all common divisors of $S$.
The greatest common divisor is well-defined over $\Z[X]$ because it is a Unique Factorization Domain~\cite{sharpe1987rings}.
In particular, as $\tI \subseteq \Z[X]$ is the ideal generated by $g_1, \ldots, g_m$, the greatest common divisor $\gcd(\tI)$ is equal to $\gcd(\{g_1, \ldots, g_m\})$.

Denote
$
\varphi \coloneqq \gcd(\tI)
$.
Then $X \nmid \varphi$ because $X \nmid g_1$.
We say that a polynomial $g \in \Z[X]$ is \emph{primitive} if there is no integer $d \geq 2$ such that $d \mid g$.
A complex number $x$ is called a \emph{root of unity} if $x^p = 1$ for some $p \geq 1$.
We say that a polynomial $g \in \Z[X]$ has a \emph{square divisor} if $\phi^2 \mid g$ for some polynomial $\phi \in \Z[X]$ with degree at most one.
A polynomial is called \emph{square-free} if it does not have a square divisor.
Since $\varphi \neq 0$, there are only five cases regarding $\varphi$:
\begin{enumerate}[(i)]
    \item $\varphi = 1$,
    \item $\varphi$ is not primitive,
    \item $\varphi$ is primitive and has a root that is not a root of unity,
    \item $\varphi$ is primitive, all roots of $\varphi$ are roots of unity, and $\varphi$ has a square divisor,
    \item $\varphi$ is primitive, all roots of $\varphi$ are roots of unity, and $\varphi$ is square-free.
\end{enumerate}
Each of the following subsections deals with one case.

\subsection{Case (i): trivial GCD}\label{subsec:caseone}
In this case, $\varphi = 1$. The following lemma gives the structure of the ideal $\tI$ in this case.
A polynomial in $\Z[X]$ is called \emph{monic} if its leading coefficient is one.

\begin{lem}[{\cite[p.384-385]{szekeres1952canonical}}]\label{lem:idealstr}
    Let $\tI$ be an ideal of $\Z[X]$ such that $\gcd(\tI) = 1$.
    Then there are only two possible cases for $\tI$:
    \begin{enumerate}[(i)]
        \item either $\tI = \Z[X]$,
        \item or $\tI$ contains an integer $c \geq 2$, as well as a monic polynomial $g$ of degree at least one.
    \end{enumerate}
    Furthermore, given a finite set of generators for $\tI$, one can decide which case is true. In case~(ii), one can explicitly compute such $c$ and $g$.
\end{lem}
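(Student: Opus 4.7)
The plan is to prove the lemma in three steps: first show $\tI \cap \Z$ is nontrivial, then dispose of the trivial case, and finally produce a monic polynomial in the nontrivial case.

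First I would note that $\gcd_{\Z[X]}(\tI) = 1$ forces $\gcd_{\Q[X]}(\tI) = 1$: any common divisor of the generators in $\Q[X]$, once normalized to a primitive polynomial in $\Z[X]$, remains a common divisor in $\Z[X]$ by Gauss's lemma, and is therefore $\pm 1$. Since $\Q[X]$ is a PID, this gives $\tI \cdot \Q[X] = \Q[X]$, so $1 = \sum_i q_i g_i$ for some $q_i \in \Q[X]$. Clearing denominators yields a positive integer $N$ with $N \in \tI$; hence $\tI \cap \Z = c \Z$ for some $c \geq 1$, which is computable as the positive generator of $\tI \cap \Z$ by elimination in a Gr\"obner basis of $\tI$ over $\Z[X]$. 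If $c = 1$ then $1 \in \tI$ and we are in case~(i) with $\tI = \Z[X]$.

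Suppose now $c \geq 2$. For each $d \geq 0$ define
\[
L_d \coloneqq \bigl\{ a \in \Z \;\bigm|\; \exists g \in \tI,\; g = a X^d + (\text{terms of degree} < d) \bigr\},
\]
which is an ideal of $\Z$, say $L_d = \ell_d \Z$ with $\ell_d \geq 0$. Since $g \in \tI$ implies $Xg \in \tI$, we have $L_d \subseteq L_{d+1}$ and hence $\ell_{d+1} \mid \ell_d$; moreover $\ell_d > 0$ once $d$ exceeds the degree of any nonzero element of $\tI$. The chain therefore stabilizes at some positive integer $\ell_\infty$, and a monic polynomial of degree $d$ lies in $\tI$ precisely when $\ell_d = 1$.

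The heart of the argument is to show $\ell_\infty = 1$. For every prime $p$, the image $\bar{\tI}_p$ of $\tI$ in $\F_p[X]$ is nonzero: otherwise $\tI \subseteq p \Z[X]$, forcing $p \mid \gcd_{\Z[X]}(\tI) = 1$. Since $\F_p[X]$ is a PID, $\bar{\tI}_p = (\bar{h}_p)$ with $\bar{h}_p \neq 0$, and for every $d \geq \deg \bar{h}_p$ the polynomial $X^{d - \deg \bar{h}_p} \bar{h}_p$ witnesses that $\bar{\tI}_p$ contains a degree-$d$ element with nonzero leading coefficient modulo $p$; lifting back to $\Z[X]$ yields an element of $\tI$ of degree $d$ with leading coefficient coprime to $p$, so $p \nmid \ell_d$. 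Hence $p \nmid \ell_\infty$ for every prime $p$, giving $\ell_\infty = 1$. Consequently $\ell_d = 1$ for some $d \geq 1$ (note $\ell_0 = c \geq 2$ excludes $d = 0$), and any polynomial realizing $1 \in L_d$ is the desired monic $g \in \tI$ of degree at least one, placing us in case~(ii).

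For effectiveness, a Gr\"obner basis of $\tI$ over $\Z[X]$ (equivalently, the Szekeres canonical generating set) lets one compute $c$ from the constant-degree generator and then iterate $d = 1, 2, \ldots$, at each step computing $\ell_d$ from the leading coefficients available in degree $d$; once $\ell_d = 1$ one extracts a witness $g$. The main obstacle, in my view, is the claim $\ell_\infty = 1$: this is where the hypothesis $\gcd(\tI) = 1$ is genuinely used, via a uniform reduction modulo every prime, whereas the rest of the argument is routine ideal-theoretic bookkeeping in $\Z[X]$.
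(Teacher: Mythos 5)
The first two thirds of your argument are fine: the passage to $\Q[X]$ via Gauss's lemma, the conclusion that $\tI \cap \Z = c\Z$ with $c \geq 1$, the dichotomy on $c$, and the setup of the leading-coefficient ideals $L_d = \ell_d \Z$ with $\ell_{d+1} \mid \ell_d$ stabilising at $\ell_\infty$. The paper itself cites Szekeres and gives no proof, so the comparison is only against correctness. The problem is the step you yourself flag as ``the heart of the argument'': you claim that lifting $X^{d - \deg \bar h_p}\bar h_p$ back to $\Z[X]$ ``yields an element of $\tI$ of degree $d$ with leading coefficient coprime to $p$.'' This is not justified. A preimage $g \in \tI$ of $X^{d-\deg\bar h_p}\bar h_p$ satisfies $\deg_{\F_p}\bar g = d$, but $\deg_{\Z}g$ may well be strictly larger than $d$, with all coefficients of $g$ in degrees $> d$ divisible by $p$. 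Such a $g$ does \emph{not} certify $p\nmid\ell_d$: your $L_d$ consists of the $X^d$-coefficients of elements of $\tI$ of degree $\leq d$, and you have no element of degree $\leq d$ in hand. Truncating $g$ at degree $d$ leaves $\Z[X]$ but exits $\tI$, and there is no reason a priori that the excess high-degree part (which is divisible by $p$ but not necessarily by $c$) can be cancelled by multiples of $c$. So the sentence ``so $p\nmid\ell_d$'' is a non sequitur, and with it the conclusion $\ell_\infty = 1$ is unproven.

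The missing ingredient is precisely to use $c \in \tI$ to control the lift, and this takes real work. One correct route: fix a prime $p \mid c$, set $v = v_p(c)$, pick $g\in\tI$ with $\bar g$ (mod $p$) nonzero of degree $e$, normalise so its $X^e$-coefficient is a unit, and write $g = u + pw$ with $u$ the degree-$\leq e$ truncation (monic after scaling). Then in $(\Z/p^{v}\Z)[X]$ one has $g^{p^{v-1}} = u^{p^{v-1}}$ (expand $(u+pw)^{p^{v-1}}$ and use $v_p\binom{p^{v-1}}{j} = v-1-v_p(j)$ for $j\geq 1$), so the image of $\tI$ in $(\Z/p^{v}\Z)[X]$ contains the monic polynomial $u^{p^{v-1}}$. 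Doing this for every $p\mid c$ and applying CRT, the image of $\tI$ in $(\Z/c\Z)[X]$ contains a monic polynomial $\bar G$ of some degree $D$; lifting $\bar G$ to a monic $G\in\Z[X]$ of degree $D$ and using that $\tI$ maps onto its image and contains $c$, one gets $G = g' - (g'-G) \in \tI$ for any $g'\in\tI$ with $g'\equiv G\pmod c$, since $g'-G\in c\Z[X]\subseteq\tI$. This yields $\ell_D = 1$, hence $\ell_\infty=1$, which is the statement you need. Your reduction-mod-$p$ observation is the right starting point, but the lifting step requires an argument along these (or Szekeres's) lines; as written, the proof has a genuine gap.
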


If $\tI = \Z[X]$ then obviously it contains an element $X^a - X^b f, \; a \neq b$.
Suppose now that $\tI$ contains an integer $c \geq 2$ and a monic polynomial $g$ of degree at least one.
In particular, $\tI \subseteq (\Z[X] \cdot g + \Z[X] \cdot c)$.

\begin{lem}\label{lem:finring}
    The quotient $\Z[X] / (\Z[X] \cdot g + \Z[X] \cdot c)$ is finite.
\end{lem}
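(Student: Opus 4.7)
The plan is to combine two successive quotient steps: first quotient out $g$, then reduce coefficients modulo $c$. Throughout, let $n \coloneqq \deg(g) \geq 1$.

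First I would observe that because $g$ is \emph{monic}, the usual polynomial long division works entirely inside $\Z[X]$: for every $p \in \Z[X]$ there exist unique $q, r \in \Z[X]$ with $p = qg + r$ and $\deg(r) < n$. (Monicity is essential; without it one would need to invert the leading coefficient and leave $\Z$.) Hence the natural map sending a polynomial of degree $< n$ to its class in $\Z[X]/\Z[X]\cdot g$ is a bijection of sets, so as an abelian group $\Z[X]/\Z[X]\cdot g$ is isomorphic to $\Z^n$ via the coefficient vector $(r_0, r_1, \ldots, r_{n-1})$.

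Next I would pass to the further quotient by the image of $\Z[X]\cdot c$. Under the isomorphism of the previous step this image corresponds to $(c\Z)^n \subseteq \Z^n$, because the reduction map is $\Z$-linear and takes an integer multiple of $c$ to the vector with all entries divisible by $c$. Therefore
\[
\Z[X] / (\Z[X]\cdot g + \Z[X]\cdot c) \;\cong\; \Z^n / (c\Z)^n \;\cong\; (\Z/c\Z)^n,
\]
which has exactly $c^n$ elements and is in particular finite.

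There is no real obstacle here; the argument is routine once one notices that monicity makes division over $\Z$ work without denominators. The only subtlety worth flagging is that the two quotients commute, so the result is independent of the order in which one kills $g$ and $c$. This explicit description as $(\Z/c\Z)^n$ will also be convenient later, since it gives a canonical system of representatives (polynomials of degree $< n$ with coefficients in $\{0, 1, \ldots, c-1\}$) that can be enumerated effectively.
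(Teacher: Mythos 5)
Your proof is correct and takes essentially the same approach as the paper: monicity of $g$ gives Euclidean division over $\Z$, reducing every class to a representative of degree below $\deg g$, and then reducing coefficients modulo $c$ leaves finitely many classes. You add the (correct) extra observation that the quotient is precisely $(\Z/c\Z)^{\deg g}$, which the paper does not bother to make explicit.
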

\begin{proof}
    Let $\deg g$ denote the degree of $g$.
    Since $g$ is monic, every $f \in \Z[X]$ can be written as $f = gh + r$ where $g, h, r \in \Z[X]$ and $\deg r < \deg g$.
    Therefore, every element in $\Z[X]$ is equivalent modulo $g$ to a polynomial with degree at most $\deg g - 1$.
    But there are only finitely many polynomials modulo $c$ with degree at most $\deg g - 1$.
    Therefore, the quotient $\Z[X] / (\Z[X] \cdot g + \Z[X] \cdot c)$ is finite.
\end{proof}

Let $f \mapsto \overline{f}$ denote the canonical projection $\Z[X] \rightarrow\Z[X] / (\Z[X] \cdot g + \Z[X] \cdot c)$.
Consider the sequence $\overline{1}, \overline{X}, \overline{X^2}, \cdots \in \Z[X] / (\Z[X] \cdot g + \Z[X] \cdot c)$.
Since $\Z[X] / (\Z[X] \cdot g + \Z[X] \cdot c)$ is finite, there exists $0 \leq p < q$ such that $\overline{X^p} = \overline{X^q}$.
Furthermore, such integers $p, q$ can be effectively found by incrementally testing whether $X^q - X^p \in (\Z[X] \cdot g + \Z[X] \cdot c)$ (see Lemma~\ref{lem:classicdec}).
Then $X^q - X^p \in (\Z[X] \cdot g + \Z[X] \cdot c)$, so $\overline{X^r} = \overline{X^{r - (q-p)}}$ for every $r \geq q$.
From this, we easily obtain the following result.

\begin{restatable}{lem}{lemgcdone}\label{lem:gcdone}
    Suppose $\tI$ contains an integer $c \geq 2$, as well as a monic polynomial $g$ of degree at least one.
    Then $\tI$ contains an element of the form $X^a - X^b f$, $a, b \in \N, a \neq b$, if and only if $\tI$ contains an element of the form $X^{a'} - X^{b'} f$, $a', b' \in [0, q-1]$.
\end{restatable}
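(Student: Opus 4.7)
The plan is to exploit the relation $X^q - X^p \in \tI$, which follows from $\overline{X^p} = \overline{X^q}$ together with $\Z[X] \cdot g + \Z[X] \cdot c \subseteq \tI$, in order to reduce exponents modulo the period $q - p$. Multiplying by $X^k$ yields $X^{q+k} \equiv X^{p+k} \pmod{\tI}$ for every $k \geq 0$. I will define the folding map $\sigma \colon \N \to [0, q-1]$ by $\sigma(n) = n$ if $n < q$ and $\sigma(n) = p + \bigl((n - p) \bmod (q - p)\bigr)$ if $n \geq q$; a routine induction on $n$ (with $X^q \equiv X^p$ as the critical step) then gives $X^n \equiv X^{\sigma(n)} \pmod{\tI}$ for every $n \in \N$.

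For the forward direction, given $X^a - X^b f \in \tI$ with $a \neq b$, I simply apply $\sigma$ to both exponents. Then $X^{\sigma(a)} - X^{\sigma(b)} f \equiv X^a - X^b f \equiv 0 \pmod{\tI}$, so this element lies in $\tI$ and has both exponents in $[0, q-1]$, as required.

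For the converse, suppose $X^{a'} - X^{b'} f \in \tI$ with $a', b' \in [0, q-1]$. If $a' \neq b'$, we are immediately done by taking $(a, b) = (a', b')$. The only delicate point is the degenerate case $a' = b'$: then $X^{a'}(1 - f) \in \tI$; multiplying by $X^{q - a'}$ (well-defined since $a' \leq q - 1$) yields $X^q - X^q f \in \tI$, and subtracting the already-available element $X^q - X^p \in \tI$ produces $X^p - X^q f \in \tI$, which has the required form since $p \neq q$.

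The only mildly subtle step is this degenerate case in the converse, which is resolved by combining $X^{a'}(1-f)$ with the relation $X^q - X^p$ itself to break the symmetry between the two exponents. Everything else is bookkeeping to verify that the folding $\sigma$ preserves equivalence modulo $\tI$, justifying the paper's phrasing that this result is ``easily obtained''.
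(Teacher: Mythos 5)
Your proof is correct, and in the converse direction it is actually more careful than the paper's own argument. The forward direction matches the paper's in substance: reduce each exponent via the eventually periodic relation $X^r\equiv X^{r-(q-p)}\pmod{\tI}$ (valid for $r\geq q$) to a representative in $[0,q-1]$; your explicit folding map $\sigma$ simply makes the induction precise.

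Where you genuinely diverge is the converse. The paper takes $(a,b)\coloneqq(a'+q(q-p),\,b')$ and asserts $X^{a'+q(q-p)}-X^{b'}f\in\tI$. This rests on $X^{a'}\equiv X^{a'+q(q-p)}\pmod{\tI}$, but the relation $X^r\equiv X^{r-(q-p)}$ is only established for $r\geq q$, so the downward reduction of $a'+q(q-p)$ terminates at $p+\bigl((a'-p)\bmod(q-p)\bigr)\in[p,q-1]$, which equals $a'$ only when $a'\geq p$. When $a'<p$ (the pre-period of the sequence $\overline{X^0},\overline{X^1},\ldots$) the paper's claim can fail outright. Concretely, take $\tI=(X+2,\,2)$, so one may choose $g=X$, $c=2$, giving $\Z[X]/(g,c)\cong\Z/2\Z$ and $p=1$, $q=2$; with $f=1$ and $a'=b'=0$ we have $X^{a'}-X^{b'}f=0\in\tI$, yet $X^{a'+q(q-p)}-X^{b'}f=X^2-1$ has odd constant term and so is \emph{not} in $(X+2,2)$. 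Your argument sidesteps this entirely: if $a'\neq b'$ there is nothing to do, and in the degenerate case $a'=b'$ you multiply $X^{a'}(1-f)$ by $X^{q-a'}$ and subtract the available element $X^q-X^p\in\tI$ to obtain $X^p-X^qf\in\tI$ with $p\neq q$ (in the example above this yields $X-X^2\in(X+2,2)$, which does lie in the ideal). The same repair is needed in the paper's proof of Lemma~\ref{lem:gcdcyc}, which is argued ``similarly'' — so your route is not merely an alternative, it is the one that actually closes the argument.
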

\begin{proof}
    Since $\overline{X^r} = \overline{X^{r - (q-p)}}$ for every $r \geq q$, every $X^r, \; r \in \N$ is equivalent modulo $(\Z[X] \cdot g + \Z[X] \cdot c)$ to $X^{r'}$ for some $r' \in [0, q-1]$.
    Since $(\Z[X] \cdot g + \Z[X] \cdot c) \subseteq \tI$, every $X^r$ is also equivalent modulo $\tI$ to $X^{r'}$ for some $r' \in [0, q-1]$.
    Therefore, if $\tI$ contains an element of the form $X^a - X^b f$, $a, b \in \N, a \neq b$, then $\tI$ contains an element of the form $X^{a'} - X^{b'} f$, $a', b' \in [0, q-1]$.
    And if $\tI$ contains some $X^{a'} - X^{b'} f$, $a', b' \in [0, q-1]$, then it also contains $X^{a' + q(q - p)} - X^{b'} f$.
    Taking $a \coloneqq a' + q(q - p), b \coloneqq b'$ we have $X^a - X^b f \in \tI$ and $a \neq b$.
\end{proof}

Since there are only finitely many integers in $[0, q-1]$, one can decide whether $\tI$ contains an element of the form $X^{a'} - X^{b'} f$, $a', b' \in [0, q-1]$ by enumerating all such $a', b'$.

\subsection{Case (ii): non-primitive GCD}

In this case, $\varphi$ is not primitive.
Suppose $d \mid \varphi$ with $d \geq 2$.
Then $d$ divides every element in $\tI$.
We show that there is an effectively computable bound on $a - b$.

\begin{lem}
    Let $d \geq 2$.
    If $d \mid X^a - X^b f$, then $0 \leq a - b \leq \deg f$.
\end{lem}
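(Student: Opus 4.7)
The plan is to argue coefficient-wise in $\Z[X]$. The condition $d \mid X^a - X^b f$ means that every coefficient of the polynomial $X^a - X^b f$ is divisible by $d$. I will focus on the coefficient at the monomial $X^a$ and derive constraints on the integer $a$ relative to the support of $X^b f$.

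First I would write $f = \sum_{i=0}^{\deg f} c_i X^i$, so that $X^b f = \sum_{i=0}^{\deg f} c_i X^{b+i}$ has support contained in the interval $[b,\, b+\deg f]$. The polynomial $X^a - X^b f$ then has a coefficient at $X^a$ equal to $1 - c_{a-b}$ if $a - b \in [0,\deg f]$, and equal to $1$ otherwise (using the convention $c_j = 0$ for $j$ outside $[0,\deg f]$).

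The key step is the following dichotomy: if $a - b \notin [0, \deg f]$, then the coefficient of $X^a$ in $X^a - X^b f$ is $1$, which cannot be divisible by $d \geq 2$, contradicting $d \mid X^a - X^b f$. Hence we must have $a - b \in [0, \deg f]$, i.e., $0 \leq a - b \leq \deg f$, which is exactly the desired bound. There is no real obstacle here; the argument is a one-line coefficient inspection, and the fact that $d \geq 2$ is what forces $1$ to be a non-trivial obstruction. The only thing to be careful about is handling the two edge possibilities $a < b$ and $a > b + \deg f$ uniformly, both of which are ruled out by the same observation that the coefficient of $X^a$ would then be $1$.
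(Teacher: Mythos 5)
Your proof is correct and is essentially the same coefficient-inspection argument as the paper's: both isolate the coefficient of $X^a$ in $X^a - X^b f$ and observe that when $a - b$ falls outside $[0, \deg f]$ this coefficient equals $1$, which $d \geq 2$ cannot divide. The only cosmetic difference is that you treat the two edge cases $a < b$ and $a > b + \deg f$ in a single unified statement, while the paper handles them in two consecutive sentences.
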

\begin{proof}
    If $a > b + \deg f$ then $\deg X^a > \deg X^b f$, so the leading coefficient of $X^a - X^b f$ is $1$, a contradiction to $d \mid X^a - X^b f$.
    Similarly if $a < b$ then the coefficient of the monomial $X^a$ in $X^a - X^b f$ is one, a contradiction to $d \mid X^a - X^b f$.
    Therefore $0 \leq a - b \leq \deg f$.
\end{proof}

Therefore if $X^a - X^b f \in \tI, \; a \neq b,$ then $d \mid X^a - X^b f$, and we must have $a - b \in [1, \deg f]$.
By Lemma~\ref{lem:laurenttoreg}, we have $X^a - X^b f \in \tI$ if and only if $X^{a - b} - f \in \mI$.
Therefore in this case, it suffices to decide for each $r \in [1, \deg f]$ whether $X^r - f \in \mI$.

\subsection{Case (iii): non-root of unity}

In this case, $\varphi$ has a root $x$ that is not a root of unity.
Since $X \nmid g_1$ we have $X \nmid \varphi$, so $x \neq 0$.
Let $\K$ be an algebraic number field that contains $x$.
The key idea in this case is to use the \emph{height function} over $\K$ to give a bound on $|a - b|$.
For an exact construction of the height function, see~\cite[Section~3.2]{waldschmidt2013diophantine}.
In this paper we will only make use of its properties listed in the following lemma.

\begin{lem}[{Height of algebraic numbers~\cite[Property~3.3 and Section~3.6]{waldschmidt2013diophantine}}]\label{lem:height}
    Let $\K$ be an algebraic number field and denote $\K^* \coloneqq \K \setminus \{0\}$. 
    There exists a map $H \colon \K^* \rightarrow \Rp$ that satisfies to following properties.
    \begin{enumerate}[(i)]
        \item For any $n \in \Z$ and $y \in \K^*$, we have $H(y^n) = H(y)^{|n|}$.
        \item For all $y \in \K^*$, we have $H(y) \geq 1$. And $H(y) = 1$ if and only if $y$ is a root of unity.
    \end{enumerate}
    For any $y \in \K^*$, the value $H(y)$ is called the \emph{height} of $y$, it is an algebraic number that can be effectively computed.
\end{lem}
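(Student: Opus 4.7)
The plan is to realize $H$ as the (absolute multiplicative) \emph{Weil height} and verify its two listed properties together with effective computability. Given $y \in \K^*$ with primitive minimal polynomial $a_d X^d + \cdots + a_0 \in \Z[X]$ (normalized with $a_d > 0$) and complex roots $y_1, \ldots, y_d$, I would set
\[
H(y) \coloneqq \Bigl(|a_d| \prod_{i=1}^d \max(1, |y_i|)\Bigr)^{1/d},
\]
so that $H(y)^d$ is the Mahler measure of the minimal polynomial of $y$. A standard argument shows this definition is independent of the ambient field containing $y$, hence $H$ is well-defined on $\K^*$.

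For property~(i), I would pass through the equivalent adélic expression $H(y) = \prod_v \max(1, |y|_v)^{d_v/[\K:\Q]}$ over the places $v$ of $\K$ (where $d_v$ is the local degree). For $n \geq 1$, the identity $|y^n|_v = |y|_v^n$ gives $H(y^n) = H(y)^n$ directly. For $n = -1$, the product formula $\prod_v |y|_v^{d_v} = 1$ combined with the pointwise identity $\max(1, t^{-1}) = t^{-1} \max(1, t)$ yields $H(y^{-1}) = H(y)$; composing these handles general $n \in \Z$.

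Property~(ii) splits into two halves. The bound $H(y) \geq 1$ is immediate from the definition, since $|a_d| \geq 1$ and each $\max(1, |y_i|) \geq 1$. The converse, ``$H(y) = 1$ implies $y$ is a root of unity'', is \emph{Kronecker's theorem}: if $H(y) = 1$, then $|a_d| = 1$ (so $y$ is an algebraic integer) and every conjugate $y_i$ lies in the closed unit disk, so every power $y^k$ again has all conjugates of modulus at most $1$, and Vieta's formulas bound the integer coefficients of the minimal polynomial of $y^k$ by a function of $d$ alone. Only finitely many such minimal polynomials can thus occur among $\{y, y^2, y^3, \ldots\}$, forcing $y^m = y^n$ for some $m > n$, and hence $y^{m-n} = 1$. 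Effective computation of $H(y)$ then reduces to numerically approximating the roots $y_i$ to sufficient precision to certify which satisfy $|y_i| > 1$; the quantity $H(y)^d = |a_d| \prod_{|y_i|>1} |y_i|$ is a real algebraic number expressible symbolically in the coefficients of the minimal polynomial (for instance via resultants). The main obstacle will be \emph{Kronecker's theorem}: property~(i), the lower bound in~(ii), and effectivity are essentially formal manipulations, whereas the converse in~(ii) requires the finiteness argument combining integrality ($|a_d| = 1$) with the archimedean constraint $|y_i| \leq 1$.
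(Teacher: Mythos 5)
The paper itself does not prove this lemma; it cites it directly from Waldschmidt~\cite{waldschmidt2013diophantine}. Your proposal reconstructs the standard development: the Mahler-measure definition of the absolute multiplicative Weil height, the passage to the adelic product over places of $\K$, verification of $H(y^n) = H(y)^n$ for $n \geq 1$ and of $H(y^{-1}) = H(y)$ via the product formula, and Kronecker's theorem for the converse in~(ii). This is precisely the content of the cited sections of Waldschmidt, so you are taking the same route as the source the paper relies on. Two small remarks on the effectivity claim: the potential difficulty you do not address is a root $y_i$ with $|y_i| = 1$ exactly, where numerical approximation alone cannot decide $|y_i| > 1$ versus $|y_i| \leq 1$; but since $|y_i|^2 = y_i \overline{y_i}$ is a real algebraic number with a computable minimal polynomial (via resultants, as you indicate), the trichotomy $|y_i| < 1$, $= 1$, $> 1$ is decidable exactly, and roots on the unit circle contribute a factor of~$1$ so the ambiguity is harmless. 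With that gap patched, the argument is complete and matches what the paper imports from the literature.
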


Since $x$ is not a root of unity, we have $H(x) > 1$.

\begin{lem}\label{lem:ht}
    Let $x \neq 0$ be a root of $\varphi$ that is not a root of unity. 
    If $\varphi \mid X^a - X^b f$, then $f(x) \neq 0$ and $|a - b| = \frac{\log H(f(x))}{\log H(x)}$.
\end{lem}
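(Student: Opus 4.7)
The plan is to substitute $x$ into the divisibility relation and then translate the resulting multiplicative identity into an additive one using the height function.

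First I would use the hypothesis $\varphi \mid X^a - X^b f$ in $\Z[X]$: this means there is a polynomial $h \in \Z[X]$ with $X^a - X^b f = \varphi \cdot h$. Evaluating at $X = x$ and using $\varphi(x) = 0$ yields
\[
x^a - x^b f(x) = 0.
\]
Since $x \neq 0$, I can divide by $x^b$ and obtain $f(x) = x^{a - b}$. In particular, because $x^{a-b} \neq 0$, we get $f(x) \neq 0$, so $f(x) \in \K^{*}$ and the height $H(f(x))$ is defined.

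Next I would invoke the multiplicative property of the height function from Lemma~\ref{lem:height}(i): taking $n \coloneqq a - b$ and $y \coloneqq x$,
\[
H(f(x)) = H(x^{a - b}) = H(x)^{|a - b|}.
\]
Finally, since $x$ is assumed not to be a root of unity, Lemma~\ref{lem:height}(ii) gives $H(x) > 1$, so $\log H(x)$ is a strictly positive real number. Taking logarithms on both sides of the above identity and dividing by $\log H(x)$ yields
\[
|a - b| = \frac{\log H(f(x))}{\log H(x)},
\]
which is the claimed formula.

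There is no real obstacle in this argument: it is a direct consequence of the factorization $\varphi \mid X^{a} - X^{b} f$ together with the two listed properties of the height. The only subtlety worth flagging is that the formula is only meaningful because $H(x) > 1$, which is precisely why the hypothesis ``$x$ is not a root of unity'' is essential; without it, one would recover no bound on $|a - b|$, and this is exactly the distinction that separates case~(iii) from cases~(iv) and (v) treated in the subsequent subsections.
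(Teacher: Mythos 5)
Your proof is correct and follows the same route as the paper's: evaluate the divisibility at $X = x$ to obtain $x^{a-b} = f(x)$, then apply the height function and its multiplicative property. You are slightly more explicit in noting that $H(x) > 1$ ensures $\log H(x) \neq 0$ so the final division is legitimate, but this is the same argument.
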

\begin{proof}
    Since $\varphi \mid X^a - X^b f$ and $x$ is a root of $\varphi$, we have $x^a - x^b f(x) = 0$.
    Therefore $x^{a - b} = f(x)$. Since $x \neq 0$ we have $f(x) \neq 0$.
    Taking the height function on both sides of $x^{a - b} = f(x)$ yields $H(x)^{|a - b|} = H(x^{a - b}) = H(f(x))$, so $|a - b| = \frac{\log H(f(x))}{\log H(x)}$.
\end{proof}

If $X^a - X^b f \in \tI$ then we must have $\varphi \mid X^a - X^b f$.
By Lemma~\ref{lem:laurenttoreg}, we have $X^a - X^b f \in \tI$ if and only if $X^{a - b} - f \in \mI$.
Therefore by Lemma~\ref{lem:ht}, it suffices to decide whether $r \coloneqq \frac{\log H(f(x))}{\log H(x)}$ is a non-zero integer, and then decide whether one of $X^r - f$ and $X^{-r} - f$ is in $\mI$.

\subsection{Case (iv): square divisor}

In this case, $\varphi$ has a square divisor. Suppose $\phi^2 \mid \varphi$ where $\deg \phi \geq 1$.
Let $x$ be a root of $\phi$, then $x \neq 0$ since $X \nmid \varphi$.
The key here is that if $\phi^2 \mid g$, then $\phi \mid g'$ where $g'$ denotes the derivative of $g$.
Indeed, writing $g = \phi^2 h$, then $g' = 2 \phi \phi' h + \phi^2 h'$ is divisible by $\phi$.

\begin{lem}\label{lem:sqfindiff}
    Let $x \neq 0$ be any root of $\phi$. 
    If $\phi^2 \mid X^a - X^b f$ where $a \neq b$, then $a - b = \frac{x f'(x)}{f(x)}$.
\end{lem}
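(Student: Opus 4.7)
The plan is to exploit the elementary fact, already highlighted just before the statement, that if $\phi^2 \mid g$ for some $g \in \Z[X]$, then $\phi$ divides the formal derivative $g'$. Consequently, every root $x$ of $\phi$ must satisfy $g(x) = 0$ \emph{and} $g'(x) = 0$. I will apply this double vanishing to $g = X^a - X^b f$ and then extract the claimed identity by algebraic manipulation, taking care that all divisions are by nonzero quantities.

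First I would write out the two equations at $x$. The relation $g(x)=0$ gives $x^a = x^b f(x)$, and since $x \neq 0$ by hypothesis, I can divide by $x^b$ to obtain
\begin{equation*}
x^{a-b} = f(x).
\end{equation*}
In particular this shows $f(x) \neq 0$, so the expression $\tfrac{x f'(x)}{f(x)}$ in the conclusion is well-defined. Next, differentiating gives $g'(X) = a X^{a-1} - b X^{b-1} f - X^b f'$, so $g'(x)=0$ reads
\begin{equation*}
a x^{a-1} - b x^{b-1} f(x) - x^b f'(x) = 0.
\end{equation*}
Multiplying by $x$ and substituting $x^a = x^b f(x)$ from the first equation gives $a x^b f(x) - b x^b f(x) - x^{b+1} f'(x) = 0$, i.e.\ $(a-b) x^b f(x) = x^{b+1} f'(x)$. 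Dividing by $x^b f(x)$, which is nonzero, yields $a - b = \tfrac{x f'(x)}{f(x)}$, as required.

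There is no serious obstacle here: the argument is a direct computation, and the only subtlety is to justify that the denominator $f(x)$ and the factor $x^b$ are nonzero before dividing by them. Both facts are immediate from $x\neq 0$ and the relation $x^{a-b}=f(x)$, so the proof is essentially a two-line derivation once the ``$\phi^2 \mid g \Rightarrow \phi \mid g'$'' observation is invoked.
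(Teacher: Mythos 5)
Your proof is correct and uses the same key observation as the paper (that $\phi^2 \mid g$ forces $\phi \mid g'$, hence double vanishing of $g$ at $x$). The only difference is cosmetic: you differentiate $X^a - X^b f$ directly and get a single uniform computation, whereas the paper first clears the factor $X^{\min(a,b)}$ and then splits into the cases $a>b$ and $a<b$; your version is slightly cleaner but not a different argument.
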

\begin{proof}
    If $a > b$ then $\phi^2 \mid X^{a-b} - f$.
    Taking the derivative of $X^{a-b} - f$ yields 
    $
    \phi \mid (a-b) X^{a-b-1} - f'
    $.
    Since $\phi(x) = 0$ this yields $(a - b) x^{a-b-1} = f'(x)$.
    On the other hand, $\phi^2 \mid X^a - X^b f$ yields $x^{a-b} = f(x)$.
    Combining these two equations, we obtain $a - b = \frac{x f'(x)}{f(x)}$.

    If $a < b$ then $\phi^2 \mid 1 - X^{b - a} f$.
    Taking the derivative of $1 - X^{b - a} f$ yields 
    $
    \phi \mid (b - a) X^{b - a - 1} f + X^{b - a} f'
    $.
    Since $\phi(x) = 0$ this yields
    $
    (b - a) x^{b - a - 1} f(x) + x^{b - a} f'(x) = 0
    $.
    Since $\phi^2 \mid X^a - X^b f$ we have $x^a = x^b f(x)$, so $f(x) \neq 0$.
    Therefore $a - b = \frac{x f'(x)}{f(x)}$.
\end{proof}

As in the previous cases we have $X^a - X^b f \in \tI$ if and only if $X^{a - b} - f \in \mI$.
Therefore, by Lemma~\ref{lem:sqfindiff}, it suffices to decide whether $r \coloneqq \frac{x f'(x)}{f(x)}$ is a non-zero integer, and then decide whether $X^r - f \in \mI$.

\subsection{Case (v): only roots of unity}\label{subsec:casefive}
In this case, $\varphi$ is primitive, square-free, and all its roots are roots of unity.

\begin{restatable}{lem}{lemdiv}\label{lem:div}
    Let $\varphi \in \Z[X]$ be a primitive, square-free polynomial such that all its roots are roots of unity.
    Then there exists an effectively computable integer $p \geq 1$ such that $\varphi \mid X^p - 1$.
\end{restatable}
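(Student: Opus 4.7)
The plan is to show that $\varphi$ must be (up to a sign) a product of distinct cyclotomic polynomials, and then use the classical identity $X^p - 1 = \prod_{d \mid p} \Phi_d$ to read off a valid exponent $p$.

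First I would invoke the standard fact that for each $n \geq 1$ the $n$-th cyclotomic polynomial $\Phi_n \in \Z[X]$ is the minimal polynomial over $\Q$ of a primitive $n$-th root of unity, is irreducible in $\Q[X]$, and is primitive in $\Z[X]$. By Gauss's lemma, a primitive polynomial in $\Z[X]$ factors into primitive irreducible polynomials in $\Z[X]$, and each such irreducible factor is, up to sign, some $\Phi_n$ as soon as all its complex roots are roots of unity. Since $\varphi$ is primitive and square-free with all roots being roots of unity, this gives a factorization $\varphi = \varepsilon \, \Phi_{n_1} \Phi_{n_2} \cdots \Phi_{n_k}$ with $\varepsilon \in \{-1, +1\}$ and pairwise distinct positive integers $n_1, \ldots, n_k$.

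From this structural description, the identity $X^p - 1 = \prod_{d \mid p} \Phi_d$ together with the pairwise coprimality of distinct cyclotomic polynomials in $\Z[X]$ shows that $\varphi \mid X^p - 1$ if and only if every $n_i$ divides $p$. Hence $p \coloneqq \lcm(n_1, \ldots, n_k)$, with the convention $p \coloneqq 1$ when $k = 0$, satisfies the required divisibility.

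To make this effective I need to extract the set $\{n_1, \ldots, n_k\}$ from the coefficients of $\varphi$. The key point is that $\deg \Phi_n$ equals Euler's totient $\phi(n)$, which tends to infinity with $n$; using any elementary lower bound of the form $\phi(n) \geq \sqrt{n/2}$, the constraint $\phi(n) \leq \deg \varphi$ forces $n$ to lie in an effectively computable range $[1, N]$ depending only on $\deg \varphi$. The algorithm is therefore to enumerate $n \in [1, N]$, construct each $\Phi_n$ explicitly, and test via polynomial division in $\Z[X]$ whether $\Phi_n \mid \varphi$; the indices for which the test succeeds are precisely $n_1, \ldots, n_k$, and we output their least common multiple. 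The only substantive step is the structural claim that the irreducible factors of $\varphi$ are cyclotomic polynomials, but this follows cleanly from Gauss's lemma and the description of the minimal polynomial of a root of unity, so no genuine obstacle arises.
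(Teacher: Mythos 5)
Your proposal is correct, and it takes a mildly different route from the paper's, though the core insight — that the roots of $\varphi$, being roots of unity, all lie among the $p$-th roots of unity for a suitable $p$ — is the same. The paper works analytically with the roots $e^{2\pi i q_i/p_i}$, rewrites them with a common denominator $p$, observes that $\varphi$ divides $X^p - 1 = \prod_{q=0}^{p-1}(X - e^{2\pi i q/p})$ in $\Q[X]$, and then passes from a $\Q[X]$-divisibility to a $\Z[X]$-divisibility via primitivity (essentially Gauss's lemma, compressed into one terse sentence). You instead factor $\varphi$ structurally as a product of distinct cyclotomic polynomials $\Phi_{n_1} \cdots \Phi_{n_k}$ (using Gauss's lemma up front) and invoke the identity $X^p - 1 = \prod_{d \mid p} \Phi_d$ to read off $p = \lcm(n_1, \ldots, n_k)$. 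Your version is arguably cleaner in two respects: it makes the role of Gauss's lemma explicit rather than implicit in the step from $\Q[X]$ to $\Z[X]$, and it gives a concrete a priori bound on the search for $p$ via $\phi(n) \leq \deg \varphi$ and the elementary estimate $\phi(n) \geq \sqrt{n/2}$, whereas the paper's $p$ (``a common multiplier of $p_1, \ldots, p_d$'') requires first computing the orders of the roots. The paper's proof is shorter and avoids the machinery of cyclotomic polynomials, at the cost of leaving the effectiveness and the final divisibility-in-$\Z[X]$ step somewhat compressed. Both are sound.
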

\begin{proof}
    Since $\varphi$ is square-free, it has no repeated roots over the complex numbers~\cite{yun1976square}.
    Recall that roots of unity are of the form $e^{\frac{2\pi i r}{s}}, \; r, s \in \N$.
    Let $e^{\frac{2\pi i q_1}{p_1}}, \ldots, e^{\frac{2\pi i q_d}{p_d}}$ be all the roots of $\varphi$.
    Let $p \geq 1$ be a common multiplier of $p_1, \ldots, p_d$, then these roots can be written as $e^{\frac{2\pi i Q_1}{p}}, \ldots, e^{\frac{2\pi i Q_d}{p}}$ where $Q_1, \ldots, Q_d \in [0, p-1]$ are pairwise distinct.
    Therefore $\varphi$ divides $X^p - 1 = \prod_{q = 0}^{p-1} (X - e^{\frac{2\pi i q}{p}})$ in the ring $\Q[X]$.
    Hence $\varphi$ divides $c(X^p - 1)$ in the ring $\Z[X]$ for some $c \in \Z$.
    Without loss of generality suppose $\varphi \neq 1$.
    Since $\varphi$ is primitive, it does not divide $c$, so it must divide $X^p - 1$ in the ring $\Z[X]$.
\end{proof}

Let $p \geq 1$ be such that
$
\varphi \mid X^p - 1
$.
Write
$
\tI = \varphi \cdot \tJ
$
where $\tJ$ is an ideal of $\Z[X]$ with $\gcd(\tJ) = 1$.
In particular, the generators of $\tJ$ are $\frac{g_1}{\varphi}, \ldots, \frac{g_m}{\varphi}$.
We apply Lemma~\ref{lem:idealstr} for $\tJ$.

If $\tJ = \Z[X]$, then $\tI$ is simply the ideal generated by $\varphi$.
Then $X^a - X^b f \in \tI$ if and only if $\varphi \mid X^a - X^b f$.
Since $\varphi \mid X^p - 1$, there exist $a \neq b$ such that $\varphi \mid X^a - X^b f$, if and only if there exist $a', b' \in [0, p-1]$ (not necessarily distinct), such that $\varphi \mid X^{a'} - X^{b'} f$.

If $\tJ \neq \Z[X]$, then by Lemma~\ref{lem:idealstr}, $\tJ$ contains an integer $c \geq 2$, as well as a monic polynomial $g$ of degree at least one.
Similar to Case~(i), consider the equivalent class of the elements $\frac{X^p - 1}{\varphi}, \frac{X^{2p} - 1}{\varphi}, \ldots,$ in the quotient $\Z[X]/(\Z[X] \cdot c + \Z[X] \cdot g)$.
Since $\Z[X]/(\Z[X] \cdot c + \Z[X] \cdot g)$ is finite by Lemma~\ref{lem:finring}, there exist $0 \leq q' < q$ such that 
\begin{equation}\label{eq:diffin}
\frac{X^{qp} - 1}{\varphi} - \frac{X^{q'p} - 1}{\varphi} \in \Z[X] \cdot c + \Z[X] \cdot g \subseteq \tJ.
\end{equation}
and we have the following result which is analogous to Lemma~\ref{lem:gcdone}.
\begin{restatable}{lem}{lemgcdcyc}\label{lem:gcdcyc}
    Suppose $\tJ$ contains an integer $c \geq 2$, as well as a monic polynomial $g$ of degree at least one.
    Then $\tI = \varphi \cdot \tJ$ contains an element of the form $X^a - X^b f$, $a, b \in \N, a \neq b$, if and only if $\tI$ contains an element of the form $X^{a'} - X^{b'} f$, $a', b' \in [0, pq-1]$.
\end{restatable}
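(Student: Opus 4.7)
The plan is to exploit the periodicity of monomials modulo $\tI$ implied by Equation~\eqref{eq:diffin}, which can be rewritten as
\[
X^{qp} - X^{q'p} = \varphi \cdot \left(\frac{X^{qp}-1}{\varphi} - \frac{X^{q'p}-1}{\varphi}\right) \in \varphi \cdot \tJ = \tI.
\]
Multiplying this by $X^s$ for any $s \geq 0$ yields the congruence $X^{s+qp} \equiv X^{s+q'p} \pmod{\tI}$, i.e.\ the reduction rule: any monomial $X^t$ with $t \geq pq$ is congruent modulo $\tI$ to $X^{t - p(q-q')}$. Iterating this rule, every $X^t$ with $t \in \N$ is congruent modulo $\tI$ to some $X^{t'}$ with $t' \in [0, pq-1]$. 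Throughout I assume $f \in \Z[X]$ without loss of generality, so that every monomial $X^{b+i}$ appearing in $X^b f$ satisfies $i \geq 0$.

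For the $(\Rightarrow)$ direction, given $X^a - X^b f \in \tI$ with $a \neq b$, I apply the reduction rule to $X^a$ to find $a' \in [0, pq-1]$ with $X^a - X^{a'} \in \tI$. For $X^b f$, I apply the same number $k$ of reduction steps simultaneously to every monomial $X^{b+i}$ in its expansion; this is valid because if $X^b$ is reducible $k$ times, then each $X^{b+i}$ with $i \geq 0$ is also reducible $k$ times, and the collective effect is $X^b f \equiv X^{b - kp(q-q')} f \pmod{\tI}$. Choosing $k$ so that $b' \coloneqq b - kp(q-q') \in [0, pq-1]$ and subtracting the two congruences from $X^a - X^b f \in \tI$ yields $X^{a'} - X^{b'} f \in \tI$.

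For the $(\Leftarrow)$ direction, suppose $X^{a'} - X^{b'} f \in \tI$ with $a', b' \in [0, pq-1]$. If $a' \neq b'$, take $a \coloneqq a'$, $b \coloneqq b'$. Otherwise $a' = b'$ and the element is $X^{a'}(1 - f) \in \tI$; to manufacture distinct exponents, multiply $X^{qp} - X^{q'p} \in \tI$ by $X^{a'}$ to obtain $X^{a'+qp} - X^{a'+q'p} \in \tI$, multiply $X^{a'}(1 - f) \in \tI$ by $X^{q'p}$ to obtain $X^{a'+q'p} - X^{a'+q'p} f \in \tI$, and add. The sum $X^{a'+qp} - X^{a'+q'p} f$ lies in $\tI$, and taking $a \coloneqq a' + qp$, $b \coloneqq a' + q'p$ gives $a \neq b$ since $q > q'$.

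The main technical care lies in the $(\Rightarrow)$ direction: the reduction of $X^b f$ must preserve the shape $X^{b'} f$ rather than produce an arbitrary polynomial combination, and this is only guaranteed by applying the reduction rule uniformly to every monomial, which in turn relies on the non-negative-exponent assumption $f \in \Z[X]$. The rest of the argument is a direct analogue of the periodicity idea in Lemma~\ref{lem:gcdone}, with $pq$ and step size $p(q-q')$ playing the roles of $q$ and $q-p$ there.
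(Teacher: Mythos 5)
Your proof is correct and follows the same periodicity idea as the paper's: the relation $X^{qp} - X^{q'p} \in \tI$ lets you reduce any monomial modulo $\tI$ to one with exponent in $[0, pq-1]$. Two remarks.

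Your forward direction is more work than needed. Once you know $X^b - X^{b'} \in \tI$ (by telescoping the reduction rule), the ideal property of $\tI$ together with $f \in \Z[X]$ give $X^b f - X^{b'} f = f\,(X^b - X^{b'}) \in \tI$ in a single step, whence $X^{a'} - X^{b'}f = (X^a - X^b f) - (X^a - X^{a'}) + f(X^b - X^{b'}) \in \tI$. The simultaneous monomial-by-monomial reduction of $X^b f$ --- and the accompanying worry at the end about ``preserving the shape $X^{b'}f$'' --- are therefore unnecessary; the ideal property already does that for you.

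Your backward direction in the subcase $a' = b'$ is in fact \emph{more} careful than the paper's. The paper, invoking the argument of Lemma~\ref{lem:gcdone}, shifts $a'$ upward (there by $q(q-p)$) and tacitly asserts that the resulting monomial difference $X^{a'+q(q-p)} - X^{a'}$ lies in $\tI$; but this can fail when $a'$ is small relative to $p$. For example with $\tI = \Z[X]\cdot 4 + \Z[X]\cdot X^2$ one gets $p=2$, $q=3$, yet $X^3 - 1 \notin \tI$, so the paper's specific choice of $a$ and $b$ does not produce an element of $\tI$. Your construction --- adding $X^{a'}\bigl(X^{qp} - X^{q'p}\bigr) \in \tI$ to $X^{q'p}\cdot X^{a'}(1-f) \in \tI$ to obtain $X^{a'+qp} - X^{a'+q'p}f \in \tI$ with $a'+qp \neq a'+q'p$ --- avoids this issue and correctly establishes the claim, so you should keep it.
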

Since there are only finitely many integers in $[0, pq-1]$, one can decide whether $\tI$ contains an element of the form $X^{a'} - X^{b'} f$, $a', b' \in [0, pq-1]$ by enumerating all such $a', b'$.

\bibliography{intermeta}

\newpage

\appendix
\section{Omitted proofs}\label{app:proof}

\lemchangebase*
\begin{proof}
    Every element $\bff \in \Z[X^{\pm}]^D$ can be uniquely written as $\bff = \bff_0 + X \cdot \bff_1 + \cdots + X^{d-1} \cdot \bff_{d-1}$ where $\bf_0, \bf_1, \ldots, \bf_{d-1}$ are in $\Z[X^{\pm d}]$.
    This gives an effective isomorphism $\varphi \colon \Z[X^{\pm}]^D \rightarrow \Z[X^{\pm d}]^{Dd}$.
    Write $\mA = M/N$ where $M, N \subseteq \Z[X^{\pm}]^D$.
    The generators of $\varphi(M)$ can be obtained by simply applying $\varphi$ to the generators of $M$, similarly for $\varphi(N)$.
    Hence $\mA = \varphi(M)/\varphi(N)$ is a finite presentation of $\mA$ as a $\Z[X^{\pm d}]$-module.
\end{proof}

\lemstruct*
\begin{proof}
    (i) is obvious.
    For (ii), suppose $z_1, \ldots, z_K$ are not all zero and let $d \in \N$ be their greatest common divisor.
    Let $n_1, \ldots, n_K \in \Z$ be such that $n_1 z_1 + \cdots + n_K z_K = d$, then $g \coloneqq g_1^{n_1} \cdots g_K^{n_K}$ is of the form $(\bb, d), \bb \in \mA$.
    Then for any $\ba \in \gen{\mG} \cap \mA$, we have $\gen{\mG} \cap \mA \ni g^{-1} \ba g = (- X^{-d} \cdot \bb, -d) (\ba, 0) (\bb, d) = (X^d \cdot \ba, 0) = X^d \cdot \ba$.
    Similarly, $\gen{\mG} \cap \mA \ni g \ba g^{-1} = X^{-d} \cdot \ba$.
    Therefore, $\gen{\mG} \cap \mA$ is a $\Z[X^{\pm d}]$-module.

    On one hand, the elements in $S$ are obviously in $\gen{\mG} \cap \mA$, so the $\Z[X^{\pm d}]$-module generated by $S$ is a submodule of $\gen{\mG} \cap \mA$.
    On the other hand, we show that the quotient $(\gen{\mG} \cap \mA)/S$ is trivial.
    Notice that since $(g_i g_j g_i^{-1} g_j^{-1})^{-1} = g_j g_i g_j^{-1} g_i^{-1}$, we have $\left\{g_i g_j g_i^{-1} g_j^{-1} \;\middle|\; i, j \in [1, K] \right\} \subseteq S$.
    Therefore, the quotient by $S$ allows one to permute elements in any product $g_{i_1}^{\epsilon_1} \cdots g_{i_n}^{\epsilon_n} \in \gen{\mG} \cap \mA$ without changing their class in $(\gen{\mG} \cap \mA)/S$.
    More precisely, for $g, g' \in \gen{\mG}$ and $i, j \in [1, K]$, if $g g_i g_j g' \in \mA$ then we have $g g_i g_j g' + S = g g_j g_i g' + S$.
    Indeed, we have $g g_i g_j g' = g (g_i g_j g_i^{-1} g_j^{-1}) g_j g_i g' = g (g_i g_j g_i^{-1} g_j^{-1}) g^{-1} + g g_j g_i g'$ and $g (g_i g_j g_i^{-1} g_j^{-1}) g^{-1}$ is in the $\Z[X^{\pm d}]$-module generated by $S$.
    For every product $g_{i_1}^{\epsilon_1} \cdots g_{i_n}^{\epsilon_n} \in \gen{\mG} \cap \mA$, we must have 
    \[
    \left(\sum_{j \in [1, n], i_j = 1} \epsilon_j, \ldots, \sum_{j \in [1, n], i_j = K} \epsilon_j\right) \in \Lambda
    \]
    by looking at the second component.
    Since $(s_{11}, \ldots, s_{1K}), \ldots, (s_{T1}, \ldots, s_{TK})$ be are the generators for $\Lambda$, by permuting the elements in the product we can rewrite $g_{i_1}^{\epsilon_1} \cdots g_{i_n}^{\epsilon_n}$ as $\left(g_1^{s_{1 1}} \cdots g_K^{s_{1 K}}\right)^{j_1} \cdots \left(g_1^{s_{T 1}} \cdots g_K^{s_{T K}}\right)^{j_T}$ where $j_1, \ldots, j_T \in \Z$.
    Therefore, $g_{i_1}^{\epsilon_1} \cdots g_{i_n}^{\epsilon_n}$ is in the $\Z[X^{\pm d}]$-module generated by $S$.

    For (iii), $\ba \in \mA$ be any element such that $(\ba, d) \in \gen{\mG}$.
    Since $d \in \N$ is the greatest common divisor for $z_1, \ldots, z_K$, every element $g$ of $\gen{\mG}$ must be of the form $(\bc, md), \; \bc \in \mA, m \in \Z$.
    Then $g \cdot (\ba, d)^{-m} \in \gen{\mG} \cap \mA$. Let $(\bb, 0) \coloneqq g \cdot (\ba, d)^{-m}$, then $\ba \in \gen{\mG} \cap \mA$ and $g = (\bb, 0) \cdot (\ba, d)^m$.
\end{proof}

\propCtwo*
\begin{proof}
We can without loss of generality suppose $\gen{\mH} \subseteq \mA$ and $\gen{\mG} \not\subset \mA$.
Otherwise notice that $\gen{\mG} \cap h \gen{\mH} = \emptyset$ if and only if $h^{-1} \gen{\mG} \cap \gen{\mH} = \emptyset$, so we can exchange the role of $\gen{\mG}$ and $\gen{\mH}$.

By Lemma~\ref{lem:struct}, suppose $\gen{\mG}$ is generated by the element $(\ba_{\mG}, d_{\mG})$ and the $\Z[X^{\pm d_{\mG}}]$-module $\gen{\mG} \cap \mA$, and $\gen{\mH}$ is generated by the elements $h_1 = (\ba'_1, 0), \ldots, h_M = (\ba'_M, 0)$.
Recall that the generators of the $\Z[X^{\pm d_{\mG}}]$-module $\gen{\mG} \cap \mA$ can be effectively computed.
Also, by Lemma~\ref{lem:changebase}, we can consider $\mA$ as a finitely presented $\Z[X^{\pm d_{\mG}}]$-module instead of a $\Z[X^{\pm}]$-module, and suppose the generators of $\gen{\mG} \cap \mA$ as well as $\ba'_1, \ldots, \ba'_M$ are given as elements of the $\Z[X^{\pm d_{\mG}}]$-module.

\textbf{Subgroup Intersection.}
In this case, $\gen{\mG} \cap \gen{\mH} = \{e\}$ if and only if every solution of $z_1 \cdot \ba'_1 + \cdots + z_M \cdot \ba'_M \in \gen{\mG} \cap \mA, \; z_1, \ldots, z_M \in \Z$ is also a solution of $z_1 \cdot \ba'_1 + \cdots + z_M \cdot \ba'_M = \bzer$.

Let $\mM$ denote the $\Z[X^{\pm d_{\mG}}]$-module
\begin{equation}\label{eq:M2s}
\mM \coloneqq \Big\{ (z_1, \ldots, z_M) \in \Z[X^{\pm d_{\mG}}]^{M} \;\Big|\;
z_1 \cdot \ba'_1 + \cdots + z_M \cdot \ba'_M \in \gen{\mG} \cap \mA \Big\}, \\
\end{equation}
and $\mZ$ denote the $\Z[X^{\pm d_{\mG}}]$-module
\begin{equation}\label{eq:Z2s}
\mZ \coloneqq \Big\{ (z_1, \ldots, z_M) \in \Z[X^{\pm d_{\mG}}]^{M} \;\Big|\;
z_1 \cdot \ba'_1 + \cdots + z_M \cdot \ba'_M = \bzer \Big\}.
\end{equation}

Then we have $\gen{\mG} \cap \gen{\mH} = \{e\}$ if and only if $\mM \cap \Z^M \neq \mZ \cap \Z^M$.

Note that $z_1 \cdot \ba'_1 + \cdots + z_M \cdot \ba'_M \in \gen{\mG} \cap \mA$ can be rewritten as a linear equation 
\begin{equation}\label{eq:equiveq}
    z_1 \cdot \ba'_1 + \cdots + z_M \cdot \ba'_M + x_1 \cdot \bg_1 + \cdots + x_m \cdot \bg_m = \bzer, \quad x_1, \ldots, x_m \in \Z[X^{\pm d_{\mG}}],
\end{equation}
where $\bg_1, \ldots, \bg_m$ are the generators of $\gen{\mG} \cap \mA$.
Therefore the generators of $\mM$ can be computed by projecting the solution set of \eqref{eq:equiveq} to the coordinates $(z_1, \ldots, z_M)$.
Similar to the previous case, Subgroup Intersection can be decided by computing the generators of $\mM \cap \Z^M$ and $\mZ \cap \Z^M$ using Lemma~\ref{lem:classicdec} and \ref{lem:decinterZ}, and deciding equality using linear algebra over $\Z$.

\textbf{Coset Intersection.}
Let $h = (\ba_h, z_h)$.
If $d_{\mG} \nmid z_h$ then $\gen{\mG} \cap h \gen{\mH} = \emptyset$.
Therefore we only need to consider the case where $z_h = z d_{\mG}$ for some $z \in \Z$.
Then $\gen{\mG} \cap h \gen{\mH} \neq \emptyset$ if and only if the equation
$
(\bb, 0) \cdot (\ba_{\mG}, d_{\mG})^z = (\ba_h, z_h) \cdot (\bc, 0)
$
has solutions $\bb \in \gen{\mG} \cap \mA,\; \bc \in \sum_{i = 1}^M \Z \cdot \ba'_i$.
Direct computation shows this is equivalent to
\[
    X^{z_h} \cdot \bc + \left(\ba_h - \frac{X^{z d_{\mG}} - 1}{X^{d_{\mG}} - 1} \cdot \ba_{\mG} \right) = \bb.
\]

Let $\mM'$ denote the $\Z[X^{\pm d_{\mG}}]$-module
\begin{multline}\label{eq:M2c}
\mM' \coloneqq \Bigg\{ (z_1, \ldots, z_M, z) \in \Z[X^{\pm d_{\mG}}]^{M+1} \;\Bigg|\; \\
X^{z_h} \cdot \left(z_1 \cdot \ba'_1 + \cdots + z_M \cdot \ba'_M\right) + z \cdot \left(\ba_h - \frac{X^{z_h} - 1}{X^{d_{\mG}} - 1} \cdot \ba_{\mG} \right) \in \gen{\mG} \cap \mA \Bigg\}. 
\end{multline}
Then $\gen{\mG} \cap h \gen{\mH} = \emptyset$ if and only if $d_{\mG} \nmid z_h$ and $\big(\mM' \cap \Z^{M+1}\big) \cap \big(\Z^{M} \times \{1\}\big) = \emptyset$.
Similarly, Coset Intersection is decidable in this case.
\end{proof}

\lemcosettoeq*
\begin{proof}
    Suppose the $\gen{\mG} \cap h \gen{\mH}$ is non-empty. Let $(\ba, z') \in \gen{\mG} \cap h \gen{\mH}$, then $d_{\mG} \mid z'$ and $d_{\mH} \mid (z' - z_h)$.
    Hence $z' = z_{\mG} + zd = z_{\mH} + zd + z_h$ for some $z \in \Z$.
    Since $(\ba, z') \in \gen{\mG} \cap h \gen{\mH}$, the Equation~\eqref{eq:Cosetsys} has solution with $m d_{\mG} = z'$, meaning
    \begin{multline*}
        \frac{X^{z d} \cdot \ba'_{\mG, \mH} - \ba''_{\mG, \mH}}{X^d - 1} =
        \frac{X^{z_{\mG} + zd}}{X^{d_{\mG}} - 1} \cdot \ba_{\mG} - \frac{X^{z_{\mH} + zd}}{X^{d_{\mH}} - 1} \cdot \ba_{\mH} - \frac{1}{X^{d_{\mG}} - 1} \cdot \ba_{\mG} + \frac{1}{X^{d_{\mH}} - 1} \cdot \ba_{\mH} - \ba_h \\
        = \frac{X^{z_{\mG} + zd} - 1}{X^{d_{\mG}} - 1} \cdot \ba_{\mG} - X^{z_h} \cdot \frac{X^{z_{\mH} + zd} - 1}{X^{d_{\mH}} - 1} \cdot \ba_{\mH} - \ba_h = X^{z_h} \cdot \bc - \bb \in \mM'.
    \end{multline*}
    Therefore \eqref{eq:Coseteqz} is satisfied.

    On the other hand, suppose Equation~\eqref{eq:Coseteqz} is satisfied.
    Then we have
    \[
    \frac{X^{z_{\mG} + zd} - 1}{X^{d_{\mG}} - 1} \cdot \ba_{\mG} - X^{z_h} \cdot \frac{X^{z_{\mH} + zd} - 1}{X^{d_{\mH}} - 1} \cdot \ba_{\mH} - \ba_h = \frac{X^{z d} \cdot \ba'_{\mG, \mH} - \ba''_{\mG, \mH}}{X^d - 1} \in \mM',
    \]
    so it can be written as $X^{z_h} \cdot \bc - \bb$ for some $\bb \in \gen{\mG} \cap \mA, \; \bc \in \gen{\mH} \cap \mA$.
    Hence the system~\eqref{eq:Cosetsys} has solutions $\bb \in \gen{\mG} \cap \mA,\; \bc \in \gen{\mH} \cap \mA,\; m = \frac{z_{\mG} + z d}{d_{\mG}},\; n = \frac{z_{\mH} + z d}{d_{\mH}}$.
\end{proof}

\lemctoi*
\begin{proof}
    Since $f_0 \cdot \ba'_{\mG, \mH} - \ba''_{\mG, \mH} \in (X^d - 1) \cdot \mM'$, we have $f \cdot \ba'_{\mG, \mH} - \ba''_{\mG, \mH} \in (X^d - 1) \cdot \mM'$ if and only if $(f - f_0) \cdot \ba'_{\mG, \mH} \in (X^d - 1) \cdot \mM'$.
    This is equivalent to $f - f_0 \in \mI'$, in other words $f \in f_0 + \mI'$.
\end{proof}

\lemlaurenttoreg*
\begin{proof}
    Suppose $g \in \mI$, then $g = f_1 \cdot g_1 + \cdots + f_m \cdot g_m$ for some $f_1, \ldots, f_m \in \Z[X^{\pm}]$. Let $c \in \N$ be large enough so that $X^c f_1, \ldots, X^c f_m$ are in $\Z[X]$, then $X^c g = X^c f_1 \cdot g_1 + \cdots + X^c f_m \cdot g_m \in \tI$.
    On the other hand, suppose $X^c g \in \tI$ for some $c \in \N$, then $X^c g = F_1 \cdot g_1 + \cdots + F_m \cdot g_m$ for some $F_1, \ldots, F_m \in \Z[X]$.
    Then $g = X^{-c} F_1 \cdot g_1 + \cdots + X^{-c} F_m \cdot g_m \in \mI$.
\end{proof}

\lemgcdcyc*
\begin{proof}
    By Equation~\eqref{eq:diffin}, we have
    $
    \frac{X^{qp} - X^{q'p}}{\varphi} \in \tJ
    $,
    so $X^{qp} - X^{q'p} \in \varphi \cdot \tJ = \tI$.
    Therefore, every $X^r, \; r \in \N$ is equivalent modulo $\tI$ to $X^{r'}$ for some $r' \in [0, pq-1]$.
    Therefore similar to Lemma~\ref{lem:gcdone}, $\tI$ contains an element of the form $X^a - X^b f$, $a, b \in \N, a \neq b$, if and only if $\tI$ contains an element of the form $X^{a'} - X^{b'} f$, $a', b' \in [0, pq-1]$.
\end{proof}

\newpage

\section{Summary of algorithms}\label{app:alg}

\begin{algorithm}[!ht]
\caption{Algorithm for Subgroup Intersection}
\label{alg:groupinter}
\begin{description} 
\item[Input:]
a finite presentation of the $\Z[X^{\pm}]$-module $\mA$, two finite sets of elements $\mG = \{(\ba_1, z_1), \ldots, (\ba_K, z_K)\},\; \mH = \{(\ba'_1, z'_1), \ldots, (\ba'_M, z'_M)\}$ in the group $\mA \rtimes \Z$.
\item[Output:] \textbf{True} (when $\gen{\mG} \cap \gen{\mH} = \{e\}$) or \textbf{False} (when $\gen{\mG} \cap \gen{\mH} \neq \{e\}$).
\end{description}
\begin{enumerate}[1.]
    \item \textbf{If $z_1, \ldots, z_K$ and $z'_1, \ldots, z'_M$ are all zero.}
    
    Compute generators of the modules $\mM$ and $\mZ$ defined in Equations~\eqref{eq:M1s} and \eqref{eq:Z1s}.
    
    Decide whether 
    $
    \mM \cap \Z^{K+M} = (\mM \cap \mZ) \cap \Z^{K+M}
    $
    using Lemma~\ref{lem:classicdec} and \ref{lem:decinterZ}.
    If yes, return \textbf{True}, otherwise return \textbf{False}.
    
    \item \textbf{If one of the sets $\{z_1, \ldots, z_K\}$ and $\{z'_1, \ldots, z'_M\}$ is all zero.}
    
    Without loss of generality suppose $z'_1 = \cdots = z'_M = 0$, otherwise swap the sets $\mG, \mH$.
    \begin{enumerate}[(i)]
        \item Compute $d_{\mG} \coloneqq \gcd(z_1, \ldots, z_K)$, and compute the generators of the $\Z[X^{\pm d_{\mG}}]$-module $\gen{\mG} \cap \mA$ using Lemma~\ref{lem:struct}.
        \item Compute generators of the modules $\mM$ and $\mZ$ in Equations~\eqref{eq:M2s} and \eqref{eq:Z2s}.
        
        Decide whether 
        $
        \mM \cap \Z^M \neq \mZ \cap \Z^M
        $
        using Lemma~\ref{lem:classicdec} and \ref{lem:decinterZ}.
        If yes, return \textbf{True}, otherwise return \textbf{False}.
    \end{enumerate}
    \item \textbf{If none of the sets $\{z_1, \ldots, z_K\}$ and $\{z'_1, \ldots, z'_M\}$ is all zero.}
    \vspace{-\baselineskip}
    \begin{enumerate}[(i)]
        \item Compute $d_{\mG} \coloneqq \gcd(z_1, \ldots, z_K), d_{\mH} \coloneqq \gcd(z'_1, \ldots, z'_M), d \coloneqq \gcd(d_{\mG}, d_{\mH})$.
        \item Compute generators of the $\Z[X^{\pm d_{\mG}}]$-module $\gen{\mG} \cap \mA$ and the $\Z[X^{\pm d_{\mH}}]$-module $\gen{\mH} \cap \mA$ using Lemma~\ref{lem:struct}.
        Compute their respective generators $S_{\mG}, S_{\mH}$ as $\Z[X^{\pm d}]$-modules using Lemma~\ref{lem:changebase}.
        Let $\mM$ be the $\Z[X^{\pm d}]$-module generated by $S_{\mG} \cup S_{\mH}$.
        \item Decide whether $(\gen{\mG} \cap \mA) \cap (\gen{\mH} \cap \mA) = \{\bzer\}$ using Lemma~\ref{lem:classicdec}(iii).
        If yes, continue, otherwise return \textbf{False}.
        \item Compute the generators of the ideal $\mI \subseteq \Z[X^{\pm d}]$ defined in Equation~\eqref{eq:defI} using the generators of $\mM$ and Lemma~\ref{lem:classicdec}(ii).
        \item Using Algorithm~\ref{alg:smm}, decide whether $\mI$ contains an element $X^{zd} - 1$ for some $z \in \Z \setminus \{0\}$.
        If yes, return \textbf{False}, otherwise return \textbf{True}.
    \end{enumerate}
\end{enumerate}
\end{algorithm}

\begin{algorithm}[!ht]
\caption{Algorithm for Coset Intersection}
\label{alg:cosetinter}
\begin{description} 
\item[Input:]
a finite presentation of the $\Z[X^{\pm}]$-module $\mA$, two finite sets of elements $\mG = \{(\ba_1, z_1), \ldots, (\ba_K, z_K)\},\; \mH = \{(\ba'_1, z'_1), \ldots, (\ba'_M, z'_M)\}$ in the group $\mA \rtimes \Z$, an element $h = (\ba_h, z_h)$.
\item[Output:] \textbf{True} (when $\gen{\mG} \cap h \gen{\mH} = \emptyset$) or \textbf{False} (when $\gen{\mG} \cap h \gen{\mH} \neq \emptyset$).
\end{description}
\begin{enumerate}[1.]
    \item \textbf{If $z_1, \ldots, z_K$ and $z'_1, \ldots, z'_M$ are all zero.}
    
    Compute generators of the module $\mM'$ defined in Equation~\eqref{eq:M1c}.
    
    Decide whether 
    $
    \big(\mM' \cap \Z^{K+M+1}\big) \cap \big(\Z^{K+M} \times \{1\}\big) = \emptyset
    $
    using Lemma~\ref{lem:classicdec} and \ref{lem:decinterZ}.
    If yes, return \textbf{True}, otherwise return \textbf{False}.
    
    \item \textbf{If one of the sets $\{z_1, \ldots, z_K\}$ and $\{z'_1, \ldots, z'_M\}$ is all zero.}
    
    Without loss of generality suppose $z'_1 = \cdots = z'_M = 0$, otherwise swap the sets $\mG, \mH$ and replace $h$ with $h^{-1}$.
    \begin{enumerate}[(i)]
        \item Compute $d_{\mG} \coloneqq \gcd(z_1, \ldots, z_K)$, and compute generators of the $\Z[X^{\pm d_{\mG}}]$-module $\gen{\mG} \cap \mA$ using Lemma~\ref{lem:struct}.
        \item If $d_{\mG} \mid z_h$, continue, otherwise return \textbf{False}.
        \item Compute generators of the modules $\mM'$ defined in Equation~\eqref{eq:M2c}.
        
        Decide whether 
        $
        \big(\mM' \cap \Z^{M+1}\big) \cap \big(\Z^{M} \times \{1\}\big) = \emptyset
        $
        using Lemma~\ref{lem:classicdec} and \ref{lem:decinterZ}.
        If yes, return \textbf{True}, otherwise return \textbf{False}.
    \end{enumerate}
    \item \textbf{If none of the sets $\{z_1, \ldots, z_K\}$ and $\{z'_1, \ldots, z'_M\}$ is all zero.}
    \vspace{-\baselineskip}
    \begin{enumerate}[(i)]
        \item Compute $d_{\mG} \coloneqq \gcd(z_1, \ldots, z_K), d_{\mH} \coloneqq \gcd(z'_1, \ldots, z'_M), d \coloneqq \gcd(d_{\mG}, d_{\mH})$.
        \item Decide whether the equation $m d_{\mG} = n d_{\mH} + z_h$ has solutions $(m, n) \in \Z^2$.
        If yes, take any solution $(m, n)$ and let $z_{\mG} \coloneqq m d_{\mG},\; z_{\mH} \coloneqq n d_{\mH}$; otherwise return \textbf{True}.
        \item Compute generators of the $\Z[X^{\pm d_{\mG}}]$-module $\gen{\mG} \cap \mA$ and the $\Z[X^{\pm d_{\mH}}]$-module $\gen{\mH} \cap \mA$ using Lemma~\ref{lem:struct}.
        Compute their respective generators $S_{\mG}, S_{\mH}$ as $\Z[X^{\pm d}]$-modules using Lemma~\ref{lem:changebase}.
        Let $\mM'$ be the $\Z[X^{\pm d}]$-module generated by $S_{\mG} \cup (X^{z_h} \cdot S_{\mH})$.
        \item Using Lemma~\ref{lem:classicdec}(i), decide whether Equation~\eqref{eq:faainM} has a solution $f \in \Z[X^{\pm d}]$. \\
        If yes, compute (by enumeration) such a solution $f_0$; otherwise return \textbf{True}.
        \item Compute the generators of the ideal $\mI' \subseteq \Z[X^{\pm d}]$ defined in Equation~\eqref{eq:defIp} using the generators of $\mM'$ and Lemma~\ref{lem:classicdec}(ii).
        \item Using Lemma~\ref{lem:classicdec}(i), decide whether $1 - f_0 \in \mI'$.
        If yes, return \textbf{False}, otherwise continue.
        \item Using Algorithm~\ref{alg:smm}, decide whether $\mI'$ contains an element $X^{zd} - f_0$ for some $z \in \Z \setminus \{0\}$.
        If yes, return \textbf{False}, otherwise return \textbf{True}.
    \end{enumerate}
\end{enumerate}
\end{algorithm}

\begin{algorithm}[!ht]
\caption{Algorithm for Shifted Monomial Membership}
\label{alg:smm}
\begin{description} 
\item[Input:] The generators $g_1, \ldots, g_m$ of an ideal $\mI \subseteq \Z[X^{\pm}]$ and an element $f \in \Z[X^{\pm}]$.

\item[Output:] \textbf{True} (when there exists $z \in \Z \setminus \{0\}$ such that $X^{z} - f \in \mI$) or \textbf{False}.
\end{description}
\begin{enumerate}[1.]
    \item Multiply each $g_i, i = 1, \ldots, m,$ by a suitable power of $X$ so that $g_i \in \Z[X]$ and $X \nmid g_i$.
    
    Denote by $\tI$ the ideal of $\Z[X]$ generated by $g_1, \ldots, g_m$.
    \item Compute $\varphi \coloneqq \gcd(g_1, \ldots, g_m)$.
    \item \textbf{If $\varphi = 1$.}
    \begin{enumerate}[(i)]
        \item If $\tI = \Z[X]$ (equivalent to $1 \in \tI$), return \textbf{True}; otherwise compute $c, g \in \tI$ defined in Lemma~\ref{lem:idealstr}.
        \item Enumerate all pairs $(p, q) \in \N^2,\; p < q,$ and test whether $X^q - X^p \in (\Z[X] \cdot g + \Z[X] \cdot c)$. 
        Stop when we find such a pair $p, q$.
        \item For all pairs of integers $a', b' \in [0, q-1]$, decide whether $X^{a'} - X^{b'} f \in \tI$.
        If such a pair exists, return \textbf{True}; otherwise return \textbf{False}.
    \end{enumerate}
    
    \item \textbf{If $\varphi$ is not primitive.}  
    
    For each integer $r \in [1, \deg f]$, decide whether $X^r - f \in \mI$.
    If such $r$ exists, return \textbf{True}; otherwise return \textbf{False}.
    
    \item \textbf{If $\varphi$ is primitive and has a root $x$ that is not a root of unity.}
    
    Let $\K$ be an algebraic number field that contains $x$ and let $H$ be the height function over $\K^*$ (Lemma~\ref{lem:height}).

    Let $r \coloneqq \frac{\log H(f(x))}{\log H(x)}$.
    If $r = 0$ or $r$ is not an integer, return \textbf{False}.
    
    Otherwise decide whether one of $X^{r} - f$ and $X^{-r} - f$ is in $\mI$.
    If yes, return \textbf{True}; otherwise return \textbf{False}.

    \item \textbf{If $\varphi$ is primitive, all roots of $\varphi$ are roots of unity, and $\varphi$ has a square divisor $\phi$.}

    Let $x$ be a root of $\phi$. Let $r \coloneqq \frac{x f'(x)}{f(x)}$.
    
    If $r$ is a non-zero integer and $X^r - f \in \mI$, return \textbf{True}; otherwise return \textbf{False}.

    \item \textbf{If $\varphi$ is primitive, all roots of $\varphi$ are roots of unity, and $\varphi$ is square-free.}
    \begin{enumerate}[(i)]
        \item Compute $p \geq 1$ such that $\varphi \mid X^p - 1$ (Lemma~\ref{lem:div}).
        \item Compute the generators $g'_i \coloneqq \frac{g_i}{\varphi}, i = 1, \ldots, m$ of the ideal $\tJ = \frac{\tI}{\varphi}$.
        \item If $\tJ = \Z[X]$ (equivalent to $1 \in \tJ$). 
        
        For each pair of integers $a', b' \in [0, p-1]$, decide whether $\varphi \mid X^{a'} - X^{b'} f$.
        If such a pair exists return \textbf{True}, otherwise return \textbf{False}.
        \item If $\tJ \neq \Z[X]$, compute $c, g \in \tJ$ defined in Lemma~\ref{lem:idealstr}.
        \begin{enumerate}[(a)]
            \item Enumerate all pairs $(q', q) \in \N^2,\; q' < q,$ and test whether $\frac{X^{pq} - 1}{\varphi} - \frac{X^{pq'} - 1}{\varphi} \in \Z[X] \cdot c + \Z[X] \cdot g$. 
            Stop when we find such a pair $q', q$.
            \item For all pairs of integers $a', b' \in [0, pq-1]$, decide whether $X^{a'} - X^{b'} f \in \tI$.
            If such a pair exists, return \textbf{True}; otherwise return \textbf{False}.
        \end{enumerate}
    \end{enumerate}
\end{enumerate}
\end{algorithm}
\end{document}